\newtheorem{thm}{Theorem}
\newtheorem{prop}{Proposition}[section]
\newtheorem{lemma}[prop]{Lemma}
\newtheorem{cor}[prop]{Corollary}
\theoremstyle{definition}
\newtheorem{definition}[prop]{Definition}
\DeclarePairedDelimiter\ceil{\lceil}{\rceil}
\theoremstyle{remark}
\newtheorem{rem}[prop]{Remark}
\numberwithin{equation}{section}
\newcommand{\per}{\mathrm{per}}
\newcommand{\RR}{\mathbb{R}}
\DeclareMathOperator{\dv}{div}
\DeclareMathOperator{\esssup}{ess \, sup}
\newcommand{\Pe}{\mathrm{Pe}}
\def\Xint#1{\mathchoice
{\XXint\displaystyle\textstyle{#1}}%
{\XXint\textstyle\scriptstyle{#1}}%
{\XXint\scriptstyle\scriptscriptstyle{#1}}%
{\XXint\scriptscriptstyle\scriptscriptstyle{#1}}%
\!\int}
\def\XXint#1#2#3{{\setbox0=\hbox{$#1{#2#3}{\int}$ }
\vcenter{\hbox{$#2#3$ }}\kern-.6\wd0}}
\def\avint{\Xint-}
\newcommand{\loc}{\mathrm{loc}}
\newcommand{\p}{{\bf p}}
\newcommand{\e}{{\bf e}}
\newcommand{\x}{{ x}}
\newcommand{\bxi}{\boldsymbol{\xi}}
\newcommand{\bzeta}{\boldsymbol{\zeta}}
\renewcommand{\d}{\, \mathrm{d} }
\newcommand{\der}{\mathrm{d}}
\renewcommand{\leq}{\leqslant}
\renewcommand{\geq}{\geqslant}
\def\de{\partial}
\def\dv{\mathrm{div}}
\newcommand{\nm}[1]{\left\| #1 \right\|}
\renewcommand{\rho}{\varrho}
\newcommand{\fd}{{\dot{f}}}
\newcommand{\rd}{{\dot{\rho}}}
\newcommand{\ff}[1]{f^{(#1)}}
\newcommand{\rr}[1]{\rho^{(#1)}}
\def\de{\partial}
\begin{document}

\title[Regularity and trend to equilibrium]{Regularity and trend to equilibrium for a non-local advection-diffusion model of active particles}
\author[L.~C.~B.~Alasio]{Luca Alasio}
\address[L.~C.~B.~Alasio]{INRIA Paris \& Laboratoire Jacques-Louis Lions, Sorbonne Universit\'e, 4 Place Jussieu, 75005 Paris, France}
\email{luca.alasio@sorbonne-universite.fr}

\author[J.~Guerand]{Jessica Guerand}
\address[J.~Guerand]{Institut Montp\'ellierain Alexandre Grothendieck UMR 5149,
Université de Montpellier, Place Eugène Bataillon,
34090 Montpellier, France}
\email{jessica.guerand@umontpellier.fr}

\author[S.~M.~Schulz]{Simon Schulz}
\address[S.~M.~Schulz]{Scuola Normale Superiore, Centro di Ricerca Matematica Ennio De Giorgi, P.zza dei Cavalieri, 3,  56126 Pisa, Italy}\email{simon.schulz@sns.it}

\keywords{De Giorgi method, regularity, advection-diffusion equation, active particles, 
  convergence to equilibrium}

\subjclass[2020]{35B65, 35K58, 35Q84, 47D07, 35Q92} 

\maketitle

\begin{abstract}

    We establish regularity and, under suitable assumptions, convergence to stationary states for weak solutions of a parabolic equation with a non-linear non-local drift term; this equation was derived from a model of active Brownian particles with repulsive interactions in the previous work \cite{bbesModel}, which incorporates advection-diffusion processes both in  particle position and orientation. We apply De Giorgi's method and differentiate the equation with respect to the time variable iteratively to show that weak solutions become smooth away from the initial time. This strategy requires that we obtain improved integrability estimates in order to cater for the presence of the non-local drift. The instantaneous smoothing effect observed for weak solutions is shown to also hold for very weak solutions arising from distributional initial data; the proof of this result relies on a uniqueness theorem in the style of M.~Pierre for low-regularity solutions. The convergence to stationary states is proved under a smallness assumption on the drift term. 
\end{abstract}

\setcounter{tocdepth}{1}
\tableofcontents

\section{Introduction}\label{sec:intro}

This work is concerned with the study of the regularity and convergence to stationary states for the non-local advection-diffusion equation 
\begin{equation}\label{eq:main eqn}
    \partial_t f + \Pe ~ \dv \big(  (1- \rho) f \e(\theta)\big)
= D_e \Delta f + \partial_{\theta}^2 f, 
\end{equation}
where $\rho(t,x) = \int_0^{2\pi} f(t,x,\theta) \d \theta$ is the \emph{angle-independent density} and $\e(\theta) = (\cos \theta, \sin \theta)$, with periodic boundary conditions both in the space variable $x \in \Omega = (0,2\pi)^2$ and the angle variable $\theta \in (0,2\pi)$; we use the notation $\Upsilon = \Omega \times (0,2\pi) = (0,2\pi)^3$. The operators $\dv$ and $\Delta$ are taken with respect to $x$ only. The constant parameters $\Pe \in \mathbb{R}$ and $D_e > 0$ are called the \emph{P\'eclet number} and \emph{spatial diffusion coefficient}, respectively. This equation was formally derived from a many-particle system in \cite{bbesModel}; the particles are said to be \emph{active} in the sense that they are self-propelled, with velocity pointing in the direction of the vector $\e(\theta)$. 

\subsection*{Context}
The regularity of elliptic and parabolic equations has been a topic of primordial importance within analysis and partial differential equations since the advent of Hilbert's 19th problem; we mention in particular the seminal contributions of De Giorgi \cite{degiorgi1,degiorgi2}, Nash \cite{nash1,nash2}, and Moser \cite{moser1,moser2}. The strategy in the aforementioned works is common: one estimates the $L^p$ norms of the weak solution \emph{locally} for arbitrary large $p$ by an iteration procedure so as to obtain local $L^\infty$ bounds. This interior boundedness is then used to obtain local oscillation estimates from which one infers H\"older continuity of the solution; either by contradiction arguments (\textit{e.g.}~\cite{vasseur}) or by quantitative methods (\textit{e.g.}~\cite{JessParabolic}). These approaches were subsequently refined and generalised to cover certain classes of degenerate elliptic equations by Ladyzhenskaya \emph{et al.}~\cite{ladyzhenskaia1988linear}, and similar degenerate parabolic equations by DiBenedetto \emph{et al.}~\cite{DBdegenerate1,DBdegenerate2,DBdegenerate3,DBdegenerate4,DBdegenerate5}. In recent years, the method has been employed to obtain analogous regularity results for the Navier--Stokes system \cite{vasseurfluid}, systems of reaction-diffusion equations \cite{VasseurCaputo,VasseurGoudon}, equations incorporating fractional diffusion \cite{VasseurCaff1,VasseurCaff2,FelsingerKassmann,KriegerStrain}, as well as for kinetic Fokker--Planck equations \cite{ClementJess,Bouchut,JessCyril,Pascucci,Wendong1,Wendong2}; also known as ultraparabolic or Kolmogorov-type equations. Some aspects of regularity for a class of Fokker--Planck equations without advection were discussed in \cite{jko}, while a more general account of the underlying physics encapsulated by such equations may be found in the monograph \cite{FrankBig}.

The long-time behaviour of solutions to semi-linear advection-diffusion equations of the form 
\begin{equation*}
    \partial_t v(t,{\bf x}) = \Delta_{\bf x} v + \dv_{\bf x} (  v \, {\bf b} )  \qquad 
   ({\bf x}\in\RR^d),
\end{equation*}
which includes \eqref{eq:main eqn}, is also a classical area of interest. Solutions of such equations exhibit a rich panorama of possible properties, depending on the structure of the term ${\bf b}$; some typical examples are as follows. 
\begin{enumerate}
    \item \textit{Linear equation} ${\bf b} = G({\bf x})$: the asymptotic behaviour depends on the existence of a suitable Lyapunov functional. A necessary condition for the existence of stationary states is (\textit{cf}.~\cite{huang2015steady}):
    \begin{equation*}
        \lim_{\bf x \to \infty} G({\bf x}) \cdot {\bf x} > 0.
    \end{equation*}
    This is guaranteed, for example, if $G=\nabla V$, with $V:\RR^d\to\RR$ being a convex function. Indeed exponential convergence to the stationary state may be obtained in this case thanks to the gradient flow approach, the entropy/entropy-dissipation method, or the Bakry--Emery Theory; see \textit{e.g.}~\cite[Ch.~10]{AGS08} and \cite[Ch.~2]{jungel2016entropy}.
    \item \textit{Aggregation-diffusion equation} ${\bf b} = \nabla W*v$: for sufficiently smooth kernels, the solution approaches the heat kernel in $L^1(\RR^d)$ with a polynomial rate (see \cite{carrillo2023asymptotic}). For singular interaction kernels blow-up may occur as in the Keller--Segel case.
    \item \textit{Blow-up and non-existence of stationary states}: examples exist even in the case of one spatial dimension.
    In the super-linear case we mention the case of  ${\bf b} = v^q$, for $q>1$, for which the critical mass was determined in \cite{alikakos1989blow}.
    In the non-local setting we mention the case of ${\bf b} = x + \delta + \int y v \d y$ for which there are no stationary states on $\RR$, see \cite{Bogachev}.
    \item \textit{Space-time periodic solutions:} we refer, for example, to \cite{ji2019existence, barles2001space}. A very interesting example of advection-diffusion equation displaying ``anomalous diffusion'' was recently obtained in \cite{armstrong2023anomalous}, where the vector field {\bf b} is constructed to be space-time periodic, divergence-free, H\"older-continuous and ``fractal''.
    \end{enumerate}
    Furthermore, the structure of the drift term $\mathbf{b}$ has a profound impact on the regularity of the stationary solutions, and it is \emph{a priori} unclear if whether or not such stationary states are smooth with respect to the space variable. 
 
\subsection*{Motivation and originality}
Equation \eqref{eq:main eqn}, derived in \cite{bbesModel} and analysed in \cite{bbes}, is different to the ones studied in the aforementioned works. It incorporates a drift-diffusion mechanism similar to kinetic Fokker--Planck equations, yet the drift term comprises the angle-independent density and is thereby non-local in addition to not being divergence-free. To the authors' knowledge, the regularity and long-time behaviour of equations involving such drift terms has not been addressed in the existing literature, except under restrictive assumptions (see, \textit{e.g.}, \cite{Bogachev}); however, concerning existence and uniqueness for similar models, we highlight the recent work \cite{BriantMeunier}. In particular, it is \emph{a priori} not clear whether solutions are merely H\"older continuous away from the initial time or if they become infinitely-differentiable, nor is it obvious that they converge to stationary states. While some continuity results had been obtained in \cite[\S 4]{bbes} by employing the Duhamel principle and assuming more regularity on the initial data, a thorough local regularity analysis had yet to be performed, which motivates the need for the current paper. Additionally, the aforementioned examples of long-time behaviour illustrate that, to the best of our knowledge, there is no unified approach to the study of convergence to equilibrium for generic advection-diffusion equations. We thereby propose an approach based on \emph{a priori} estimates tailored to the problem at hand and, as a result, obtain strong regularity properties.

The present contribution provides a regularity analysis \emph{\`a la} De Giorgi for \eqref{eq:main eqn}. We first show interior local boundedness of weak subsolutions of \eqref{eq:main eqn} by an iteration procedure, which then enables us to show smoothness of weak solutions away from the initial time by means of a bootstrapping argument; the periodicity in space-angle means that we do not need to restrict ourselves to small subcylinders with respect to these variables, whence our result is global in space-angle. We then extend this higher-regularity result to very weak solutions, which arise from merely distributional initial data; the proof relies on a uniqueness theorem for such low-regularity solutions (see Theorem \ref{thm:weak strong uniqueness}). Throughout the paper, we shall make use of the fact that the angle-independent density $\rho$ (and its derivatives) admits higher integrability than the original unknown $f$; reminiscent of velocity averaging lemmas in kinetic theory (see, \textit{e.g.}, \cite{PerthameKinetic}). Finally, by adapting the aforementioned bootstrapping approach, we show the smoothness of stationary solutions. We also show exponential convergence to these stationary states with respect to the $L^2$ norm under a smallness assumption on the P\'eclet number.

\subsection*{Plan of the paper} The rest of the paper is organised as follows. In \S \ref{sec:setup} we recall the original notions of weak and very weak solutions for the equation \eqref{eq:main eqn} introduced in the previous work \cite{bbes}, and provide the statements of our main theorems in \S \ref{sec:main results}. In \S \ref{sec:weak solutions} we provide an alternative notion of weak solution which is better suited to our regularity analysis as well as the main rescaling lemma used for the De Giorgi method. \S \ref{sec:boundedness of weak sol} is concerned with the boundedness of weak solutions; we show the local-in-time boundedness away from the initial time in \S \ref{sec:first de giorgi lemma} for generic admissible initial data, and also provide a global-in-time estimate in \S \ref{subsec:alikakos} for more regular initial data. We study the higher regularity of weak solutions in \S \ref{sec:higher reg} by means of a bootstrapping argument, which yields smoothness of weak solutions away from the initial time. \S \ref{sec:very weak} generalises the higher regularity results obtained for weak solutions in \S \ref{sec:higher reg} to the very weak solutions, where we also prove a uniqueness result for very weak solutions using an argument \emph{\`a la} Michel Pierre. In \S \ref{sec:harris} we prove the smoothness of non-negative stationary solutions, and also show the convergence to stationary states under the assumption of small P\'eclet number. Appendix \ref{sec:appendix} is devoted to the proofs of some technical lemmas and is divided into two parts: Appendix \ref{app:CZ periodic} contains the proof of a version of a Calder\'on--Zygmund Theorem applicable to our periodic setting; Appendix \ref{app:alternative weak formulation} contains the proof of the alternative weak formulation introduced in \S \ref{sec:weak solutions}.

\subsection*{Notations and functional setting} Throughout this work, we use the shorthand $\bxi = (\x,\theta)$ for the concatenated space-angle variable. The letter $C$ will always denote a positive constant independent of $t,\bxi$, unless explicitly stated otherwise, and may change from line to line. For a point $(t_0,\bxi_0)$, we define the \emph{open parabolic cylinder} of radius $r$ by 
\begin{equation}\label{eq:cylinder def}
    Q_r(t_0,\bxi_0) := \{ (t,\bxi) : -r^2 < t-t_0 \leq 0, \, | \bxi-\bxi_0| < r \}, 
\end{equation}
and we shall write $Q_r(0) = Q_r$ when $(t_0,\bxi_0) = 0$. The domain on which the equation is posed is denoted by $\Upsilon_T := (0,T)\times\Upsilon$, and similarly we write $\Omega_T = (0,T)\times\Omega$. We also define the usual parabolic norm 
\begin{equation}\label{eq:parabolic norm}
    \Vert f \Vert_{\mathscr{P}}^2 := \Vert f \Vert_{L^\infty(0,T;L^2(\Upsilon))}^2 + \Vert \nabla_{\bxi} f \Vert_{L^2(\Upsilon_T)}^2, 
\end{equation}
and the corresponding space $\mathscr{P}$ of functions $f:\Upsilon_T \to \mathbb{R}$ with $\Vert f \Vert_\mathscr{P}$ finite. Unless stated explicitly otherwise, the symbols $\dv,\nabla,\Delta$ denote divergences, gradients, and Laplacians taken with respect to the space variable $x$, while the symbols $\dv_{\bxi},\nabla_{\bxi},\Delta_{\bxi}$ denote operators with respect to the concatenated space-angle variable $\bxi=(x,\theta)$.

The topological dual of a function space $E$ is denoted by $E'$, and the bracket $\langle \cdot, \cdot \rangle$  denotes the dual pairing between elements of a space and its dual. In what follows, the functions spaces denoted by $Z_\per(S)$ for $S \in \{(0,2\pi)^d\}_{d=1}^3$, \textit{i.e.}~$d=2$ corresponding to $\Omega$ and $d=3$ to $\Upsilon$, with $Z \in \{L^p,W^{k,p},C^k\}$, are understood to mean 
\begin{equation*}
    Z_\per(S) := \big\{ g:\mathbb{R}^d \to \mathbb{R}: \, \Vert g \Vert_{Z(S)} < \infty, \text{ and } g(y+2\pi \e_i) = g(y) ~ \forall y \in \mathbb{R}^d, i \in \{1,\dots,d\}  \big\}, 
\end{equation*}
where $\{\e_i\}_{i=1}^d$ is the standard basis of $\mathbb{R}^d$. We also refer to such functions as being \emph{$S$-periodic}; by which we mean that such functions are periodic with periodic cell $S$. We denote the spaces 
\begin{equation*}
    \begin{aligned}
                &Y := H^1_\per(\Upsilon) \cap L^2_\per(\Omega;H^2_\per(0,2\pi)), \quad X:= L^2(0,T;Y).  
    \end{aligned}
\end{equation*}
 We denote the space-angle average of a function $f$ by 
\begin{equation*}
    \langle f \rangle := \avint_\Upsilon f \d \bxi = \frac{1}{|\Upsilon|}\int_\Upsilon f \d \bxi. 
\end{equation*}

\subsection{Definitions and problem set-up}\label{sec:setup}
For the purposes of the regularity analysis, the coefficients $\Pe$ and $D_e$ appearing in \eqref{eq:main eqn} do not matter; the following paragraph is concerned with rescaling the variables so as to make them both vanish from the equation. By defining $(a,b,c) := (D_e \Pe^{-2},D_e \Pe^{-1},\sqrt{D_e}\Pe^{-1})$, we see that the rescaled functions 
\begin{equation*}
    \begin{aligned}
        \tilde{f}(t,\x,\theta) := f(at,b\x,c\theta), \quad  \tilde{\rho}(t,\x) := c\int_{0}^{\frac{2\pi}{c}} \tilde{f}(t,\x,\theta) \d \theta = \rho(at,b\x), \quad   \tilde{\e}(\theta) := \e(c\theta), 
    \end{aligned}
\end{equation*}
satisfy, on the rescaled domain $(0,a^{-1}T)\times b^{-1}\Omega \times (0,c^{-1}2\pi)$, the equation 
\begin{equation*}
   \begin{aligned}
       \partial_t \tilde{f} + \dv\big( (1-\tilde{\rho})\tilde{f} \tilde{\e}(\theta) \big) = \Delta \tilde{f} + \partial^2_\theta \tilde{f}, 
   \end{aligned} 
\end{equation*}
whence the constants $\Pe,D_e$ have vanished. The above rescaling alters the periodicity of the functions $\tilde{f}$, $\tilde{\rho}$ and $\tilde{\e}(\theta)$. This is of no importance whatsoever, since the proofs of the well-posedness results of \cite{bbes} extend easily to any choice of space-angle period. Moreover, the period with respect to the variable $\x$ need not be equal to that with respect to $\theta$. 

Therefore, for the sake of clarity and concision, the regularity analysis in this work is concerned with the study of the drift-diffusion equation 
\begin{equation}\label{eq:main eqn intro}
    \partial_t f + \dv_{\bxi}(Uf) = \Delta_{\bxi} f \qquad \text{in } \Upsilon_T, 
\end{equation}
where $\bxi =(\x,\theta)$ is the concatenated space-angle variable and \begin{equation}\label{eq:U}
    U = (1-\rho)\left(\begin{matrix}\e(\theta) \\ 
0 \end{matrix}\right).
\end{equation}

For convenience, we recall the definitions of weak and very weak solutions introduced in \cite{bbes}, as well as the well-posedness results proved therein.

\begin{definition}[Notions of Solution]\label{def:concept of solution}\quad

We introduce the notions of \emph{weak solution} and \emph{very weak solution}. 

\begin{enumerate}
    \item A \emph{weak solution} of \eqref{eq:main eqn} with non-negative initial data $f_0 \in L^2_\per(\Upsilon)$ satisfying 
    \begin{equation}\label{eq:initial data assumptions}
    \rho_0(x) = \int_0^{2\pi} f_0(x,\theta) \d \theta \in [0,1] \text{ a.e.~}x\in\Omega   
    \end{equation}
    is a function $f\in C([0,T];L^2_\per(\Upsilon)) \cap L^2(0,T;H^1_\per(\Upsilon))$ with 
$\partial_t f\in L^2(0,T;(H^1_\per)'(\Upsilon))$ such that, for all $\varphi \in L^2(0,T;H^1_\per(\Upsilon))$, there holds 
    \begin{equation}\label{eq:weak_form_def}
  \left\lbrace  \begin{aligned}
   & \left\langle\partial_t f, \varphi\right\rangle = \Pe \int_{\Upsilon_T}\!\!\! (1-\rho) f\e(\theta) \cdot \nabla\varphi \d \bxi \d t - D_e \int_{\Upsilon_T}\!\!\! \nabla f \cdot \nabla \varphi \d \bxi \d t - \int_{\Upsilon_T} \!\!\! \partial_\theta f \cdot \partial_\theta \varphi \d \bxi \d t,\\
 &\lim_{t \to 0^+}f(t)=f_0 \text{ in } L^2_{\per}(\Upsilon),
\end{aligned}\right.
    \end{equation}
where $\rho(t,x)=\int_0^{2\pi}f(t,x,\theta) \d \theta$ satisfies 
\begin{equation}\label{eq:rho bounds from bbes}
    0 \leq \rho(t,x) \leq 1 \text{ a.e.~}(t,x) \in \Omega_T. 
\end{equation}

\item A \emph{very weak solution} of \eqref{eq:main eqn} with non-negative initial data $f_0 \in L^2_\per(\Omega;(H^1_\per)'(0,2\pi))$ satisfying 
\begin{equation}\label{eq:initial data assumptions very weak}
    \rho_0(x) = \langle f_0(x,\cdot),1 \rangle \in [0,1] \text{ a.e.~}x\in\Omega 
\end{equation}
is a function $$f \in L^2(0,T;H^1_{\per}(\Omega;(H^1_{\per})'(0,2\pi))) \cap L^\infty(0,T;L^2_{\per}(\Omega;(H^1_{\per})'(0,2\pi))) \cap L^2(\Upsilon_T)$$ with $\partial_t f \in X'$ such that, for all $\varphi \in X$, there holds 
    \begin{equation}\label{eq:weak_form_def_distributional}
    \left\lbrace\begin{aligned}
   & \left\langle\partial_t f, \varphi\right\rangle = \Pe \int_{\Upsilon_T}\!\!\! (1-\rho) f \e (\theta) \cdot \nabla \varphi \d \bxi \d t - D_e \int_{\Upsilon_T} \!\!\! \nabla f \cdot \nabla \varphi \d \bxi \d t + \int_{\Upsilon_T} \!\!\! f \partial^2_\theta \varphi \d \bxi \d t,\\
  &  \lim_{t \to 0^+}f(t) = f_0 \text{ in }Y', 
\end{aligned}\right.
    \end{equation}
where $\rho(t,x)=\int_0^{2\pi}f(t,x,\theta) \d \theta$ satisfies the estimate \eqref{eq:rho bounds from bbes}. 
\end{enumerate}
\end{definition}

We recall that it was proved in \cite{bbes} that, if the initial data $f_0 \in L^2_\per(\Upsilon)$ is non-negative and satisfies \eqref{eq:initial data assumptions}, then there exists a unique weak solution of \eqref{eq:main eqn}. Similarly, if the initial data $f_0 \in L^2_\per(\Omega;(H^1_\per)'(0,2\pi))$ is non-negative and satisfies \eqref{eq:initial data assumptions very weak}, then there exists a very weak solution of \eqref{eq:main eqn}. Furthermore, these solutions are \emph{global-in-time} in the sense that they exist on the time interval $(0,T)$ for all $T>0$.

\subsection{Main theorems}\label{sec:main results}

We state our main results, which are essentially partitioned three two: regularity of the time-dependent solutions, regularity of stationary solutions, and convergence to stationary states. For the regularity, as already mentioned, we first prove the smoothness of weak solutions away from the initial time, and then extend this result to the very weak solutions. 

Our first main theorem is the following.

\begin{thm}[Smoothness away from Initial Time]\label{thm:smooth away from initial}
  Assume $f_0$ is non-negative and satisfies \eqref{eq:initial data assumptions}, $T>0$, and let $f$ be the unique weak solution of \eqref{eq:main eqn} with initial data $f_0$. Then, for a.e.~$t \in (0,T)$, there holds $f \in C^\infty((t,T)\times\mathbb{R}^3)$. 
\end{thm}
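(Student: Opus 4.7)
The plan is to proceed in two main stages: first, use De Giorgi's iteration to obtain a local $L^\infty$ bound on $f$ away from the initial time, and then bootstrap the equation by iterated time differentiation, combined with elliptic (Calderón--Zygmund) regularity in the concatenated variable $\bxi$, to gain arbitrarily many derivatives.

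For the first stage, I would exploit the fact that, by \eqref{eq:rho bounds from bbes}, the drift $U = (1-\rho)(\e(\theta),0)^\top$ belongs to $L^\infty(\Upsilon_T;\mathbb{R}^3)$ with a universal bound independent of the solution. Testing \eqref{eq:main eqn intro} with truncations $(f-k)_+$ against cut-offs in nested parabolic cylinders yields the standard De Giorgi energy inequality; the non-local term is harmless once it is written in the form $\dv_\xi(Uf) = U\cdot\nabla_\xi f - (\nabla_x\rho\cdot\e(\theta))f$, since the product is controlled by $\|f\|_{L^\infty}$ times the diffusion. Invoking the main rescaling lemma of Section~\ref{sec:weak solutions}, I then iterate on the level sets and conclude $f \in L^\infty((t_0,T)\times\Upsilon)$ for every $t_0 > 0$; the global-in-$\bxi$ statement is automatic from periodicity.

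For the bootstrap, set $g = \partial_t f$ and differentiate \eqref{eq:main eqn intro} in time (justifying via difference quotients, using the interior $L^\infty$ bound already obtained) to get
\begin{equation*}
\partial_t g + \dv_\xi(Ug) - \Delta_\xi g = -\dv_\xi\bigl((\partial_t U)\,f\bigr), \qquad \partial_t U = -(\partial_t\rho)\,\bigl(\e(\theta),0\bigr)^\top.
\end{equation*}
This has the same structural form as \eqref{eq:main eqn intro} with a source depending on the $\theta$-average $\partial_t \rho = \int_0^{2\pi}\partial_t f\,\d\theta$. Applying the De Giorgi lemma again to $g$ on a shrunken time interval yields $\partial_t f \in L^\infty_\loc((0,T)\times\Upsilon)$, and inductively every $\partial_t^k f$ is locally bounded away from $t=0$. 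Once all time derivatives are controlled, the identity
\begin{equation*}
-\Delta_\xi f = -\partial_t f - \dv_\xi(Uf)
\end{equation*}
is a Poisson equation on $\Upsilon$ whose right-hand side lies in $L^p_\per$ for every $p<\infty$; the periodic Calderón--Zygmund estimate proved in Appendix~\ref{app:CZ periodic} then places $f$ in $W^{2,p}_\per$. Differentiating in $\bxi$ and iterating both the elliptic bootstrap and the time-derivative bootstrap propagates the regularity to all orders, yielding $f \in C^\infty((t,T)\times\mathbb{R}^3)$.

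The main obstacle is the non-local, non-linear drift: at each bootstrap step the right-hand side $\dv_\xi((\partial_t^k U)f)$ involves products of time derivatives of $\rho$ with $f$ and its derivatives, and controlling these requires that $\rho$ enjoys strictly better integrability and differentiability than $f$ itself. This is where the improved integrability estimates for $\rho$ become essential: integrating the equation in $\theta$ shows that $\rho$ solves a scalar parabolic PDE on $\Omega_T$ whose flux depends on $\mathbf{p} = \int_0^{2\pi} f\,\e(\theta)\,\d\theta$, and classical parabolic theory then upgrades $\rho$ (and its time derivatives) into spaces strictly better than those to which $f$ belongs, in the spirit of velocity averaging. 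This asymmetry is precisely what allows the quadratic nonlinearity to be absorbed at every stage of the iteration, closing both the initial De Giorgi step for $\partial_t^k f$ and the subsequent elliptic upgrade in $\bxi$.
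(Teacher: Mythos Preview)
Your proposal is essentially correct and follows the same route as the paper: De Giorgi iteration for local $L^\infty$ boundedness (\S\ref{sec:first de giorgi lemma}), iterated time differentiation with a De Giorgi step at each stage (\S\ref{sec:higher reg}), and the periodic Calder\'on--Zygmund estimate to recover spatial regularity; you also correctly identify the crucial mechanism, namely that $\rho$ lives on the two-dimensional domain $\Omega$ and therefore enjoys strictly better integrability than $f$ via interpolation, which is exactly how the paper closes the bootstrap (compare Lemma~\ref{lem:L8 for dot rho}, where one shows $\dot{\rho}\in L^8$ so that the source $V^1=f\dot{\rho}\,\e(\theta)$ lies in $L^q$ with $q>5$, the threshold required by Lemma~\ref{lem:interior local boundedness weak sol}).

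One correction: in your first De Giorgi step you expand $\dv_{\bxi}(Uf)=U\cdot\nabla_{\bxi}f-(\nabla\rho\cdot\e(\theta))f$ and claim the zeroth-order term is controlled by $\Vert f\Vert_{L^\infty}$ times the diffusion. This is circular, since $\Vert f\Vert_{L^\infty}$ is precisely what you are trying to establish, and $\nabla\rho$ is only in $L^2$ at this stage. The paper avoids this entirely by keeping the drift in divergence form and using only the pointwise bound $|U|\leq 1$ coming from \eqref{eq:rho bounds from bbes}; see the Caccioppoli inequality in Lemma~\ref{lem:energy ineq weak sol prior to local boundedness}, where the relevant term is $\int\eta^2 v\,\nabla_{\bxi}v\cdot U$, handled directly by Young's inequality. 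With that adjustment your argument goes through.
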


We then establish that very weak solutions coincide with weak solutions away from the initial time. 

\begin{thm}[Uniqueness for Very Weak Solutions]\label{thm:weak strong uniqueness}
    For any $T>0$, let $f$ be a very weak solution of \eqref{eq:main eqn}. Then, for a.e.~$t \in (0,T)$, $f$ coincides with the unique weak solution of \eqref{eq:main eqn} on the interval $(t,T)$ with initial data $f(t,\cdot)$. 
\end{thm}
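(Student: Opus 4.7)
My plan is a two-step reduction: I restart at positive times so that $f$ provides an admissible weak initial datum, and then I establish uniqueness within the very weak class by a duality argument \emph{\`a la} Michel Pierre. Since $f\in L^2(\Upsilon_T)$ by Definition \ref{def:concept of solution}(2), Fubini's theorem furnishes a full-measure set $E\subset(0,T)$ with $f(t_0,\cdot)\in L^2_\per(\Upsilon)$ for every $t_0\in E$. Combined with the pointwise bound $\rho\in[0,1]$ from \eqref{eq:rho bounds from bbes}, this shows that $f(t_0,\cdot)$ satisfies \eqref{eq:initial data assumptions}, so it is an admissible weak initial datum. Let $g$ denote the unique weak solution of \eqref{eq:main eqn} on $(t_0,T)$ with datum $f(t_0,\cdot)$ provided by \cite{bbes}. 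Integrating the $\partial_\theta^2$ term in \eqref{eq:weak_form_def} by parts converts it into \eqref{eq:weak_form_def_distributional}, so $g$ is itself a very weak solution on $(t_0,T)$. Both $f|_{(t_0,T)}$ and $g$ thus belong to the very weak class with the same initial value, and proving uniqueness within that class yields $f=g$ on $(t_0,T)$.

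The uniqueness step is carried out by duality. Writing $w=f|_{(t_0,T)}-g$ and $\eta=\int_0^{2\pi} w\,d\theta$, subtraction of the two very weak formulations gives, for each $\varphi\in X$,
\begin{equation*}
\langle\partial_t w,\varphi\rangle + \int_{(t_0,T)\times\Upsilon}\nabla w\cdot\nabla\varphi\,d\bxi\,dt - \int_{(t_0,T)\times\Upsilon} w\,\partial_\theta^2 \varphi\,d\bxi\,dt = \int_{(t_0,T)\times\Upsilon}\big((1-\rho_f)w-\eta g\big)\e(\theta)\cdot\nabla\varphi\,d\bxi\,dt,
\end{equation*}
where I used the identity $(1-\rho_f)f-(1-\rho_g)g=(1-\rho_f)w-\eta g$. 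For each smooth $\Psi$, I solve the backward linear adjoint problem
\begin{equation*}
-\partial_t\psi-\Delta_\xi\psi-(1-\rho_f)\e(\theta)\cdot\nabla_x\psi=\Psi,\qquad \psi(T,\cdot)=0,
\end{equation*}
by invoking periodic $L^p$-maximal regularity (Appendix \ref{app:CZ periodic}) and using the uniform bound $\rho_f\in[0,1]$ to obtain $\psi$ in a sufficiently regular Bochner class to serve as a test function. Testing the equation for $w$ against $\psi$, and noting that both $w(t_0,\cdot)=0$ and $\psi(T,\cdot)=0$, yields the representation
\begin{equation*}
\int_{(t_0,T)\times\Upsilon} w\,\Psi\,d\bxi\,dt = -\int_{(t_0,T)\times\Upsilon} \eta\, g\, \e(\theta)\cdot\nabla_x\psi\,d\bxi\,dt.
\end{equation*}
Bounding the right-hand side by H\"older's inequality---exploiting the angle-averaging gain on $\eta$ (which, being a $\theta$-average of $w$, enjoys higher integrability than $w$ itself, in the spirit of velocity averaging) and the improved regularity of the weak solution $g$ inherited from Theorem \ref{thm:smooth away from initial}---closes a Gr\"onwall-type estimate on the length of the time window and forces $w\equiv 0$.

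The main obstacle will lie in constructing and estimating the adjoint solution $\psi$: the drift $(1-\rho_f)\e(\theta)$ is only $L^\infty$ and not H\"older near $t_0$, so Schauder theory is unavailable and one must push through a linear $L^p$-maximal regularity bound with integrability exponents chosen to render the cross-term $\int\eta\,g\,\e\cdot\nabla_x\psi$ absorbable. Balancing these exponents hinges on the smoothing obtained by averaging in $\theta$ and on the smoothness of $g$ away from $t_0$ granted by Theorem \ref{thm:smooth away from initial}; this is where the reduction to the weak class in the first step of the plan pays off, since without the improved regularity of $g$ the cross-term could not be closed.
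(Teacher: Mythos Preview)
Your overall architecture---restart at an a.e.\ time $t_0$ where $f(t_0,\cdot)\in L^2$, produce the weak solution $g$ from that datum, then prove $f=g$ on $(t_0,T)$ by a Michel Pierre duality argument---is exactly the paper's. The divergence is in how you set up the dual problem, and there the proposal has a genuine gap.

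You build the adjoint $\psi$ from the equation $-\partial_t\psi-\Delta_{\bxi}\psi-(1-\rho_f)\e(\theta)\cdot\nabla\psi=\Psi$, which \emph{omits} the nonlocal cross-term coming from $\eta g=(\rho_f-\rho_g)g$, and then propose to absorb the leftover $\int \eta\,g\,\e\cdot\nabla\psi$ by a Gr\"onwall argument. This does not close with the tools you invoke. First, there is no genuine ``velocity averaging'' gain on $\eta$: the only improvement $\eta$ enjoys over $w$ is the structural bound $\eta=\rho_f-\rho_g\in[-1,1]$, and using that $L^\infty$ bound decouples the estimate from $\|w\|$, so you never recover a factor of $\|w\|_{L^2}$ on the right-hand side with exponent~$1$. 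Second, invoking Theorem~\ref{thm:smooth away from initial} to make $g$ bounded only works on $(t_0+\varepsilon,T)$; near the restart time $t_0$---precisely where the uniqueness argument must begin---$\|g\|_{L^\infty}$ is not controlled, so the ``short-time window'' smallness does not materialise. Concretely, with $\|\eta\|_{L^2}\le C\|w\|_{L^2}$, $\|\nabla\psi\|_{L^2}\le C\|\Psi\|_{L^2}$ and only $g\in L^{10/3}$ available uniformly up to $t_0$, H\"older cannot produce an inequality of the form $\|w\|_{L^2}^2\le \epsilon(\delta)\|w\|_{L^2}^2$ with $\epsilon(\delta)<1$.

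The paper avoids this entirely by a Fubini rewriting: since $\eta(t,x)=\int_0^{2\pi}w(t,x,\theta')\,\der\theta'$ is $\theta$-independent, one has
\[
\int_{\Upsilon_T}\eta\,g\,\e(\theta)\cdot\nabla\psi\,\der\bxi\,\der t
=\int_{\Upsilon_T} w(t,x,\theta)\left(\int_0^{2\pi} g(t,x,\theta')\,\e(\theta')\cdot\nabla\psi(t,x,\theta')\,\der\theta'\right)\der\bxi\,\der t,
\]
which turns the cross-term into $\int w\cdot(\text{linear operator on }\psi)$. One then includes this nonlocal operator in the dual equation, namely solves
\[
\partial_t\phi-\Delta_{\bxi}\phi-(1-\rho_f)\e(\theta)\cdot\nabla\phi+\int_0^{2\pi}g\,\e(\theta')\cdot\nabla\phi\,\der\theta'=-\zeta,\qquad \phi(0,\cdot)=0,
\]
so that testing the equation for $w$ against $\psi(t,\cdot)=\phi(T-t,\cdot)$ yields $\int w\,\zeta=0$ for every $\zeta\in C^\infty_c$, hence $w\equiv 0$ with no remainder and no Gr\"onwall. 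Note that this requires only $g\in L^2(\Upsilon_T)$ to make the dual equation well-posed in $L^2(0,T;H^2_\per)$; Theorem~\ref{thm:smooth away from initial} is not used at all in the uniqueness step.
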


From Theorems \ref{thm:smooth away from initial} and \ref{thm:weak strong uniqueness} we obtain the regularity result for very weak solutions.

\begin{thm}[Regularity for Very Weak Solutions]\label{thm:reg very weak}
Assume $f_0$ is non-negative and satisfies \eqref{eq:initial data assumptions very weak}, $T>0$, and let $f$ be a very weak solution of \eqref{eq:main eqn} with initial data $f_0$. Then, for a.e.~$t \in (0,T)$, there holds $f \in C^\infty((t,T)\times\mathbb{R}^3)$. 
\end{thm}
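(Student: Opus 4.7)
The plan is to derive Theorem \ref{thm:reg very weak} as a direct consequence of the two preceding theorems, by chaining Theorem \ref{thm:weak strong uniqueness} (which upgrades a very weak solution to a weak solution on any later time interval) with Theorem \ref{thm:smooth away from initial} (which provides $C^\infty$ regularity for weak solutions away from their initial time). No fundamentally new estimate is needed; the only delicate point is book-keeping of the measure-zero exceptional sets on which neither intermediate result guarantees its conclusion.

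More precisely, I would argue as follows. By Theorem \ref{thm:weak strong uniqueness} there exists a measurable set $\mathcal{N}_1 \subset (0,T)$ of Lebesgue measure zero such that for every $t_0 \in (0,T)\setminus \mathcal{N}_1$, the slice $f(t_0,\cdot)$ is an admissible weak initial datum (in particular it lies in $L^2_{\per}(\Upsilon)$, is non-negative, and its angle average $\rho(t_0,\cdot)$ takes values in $[0,1]$), and $f$ agrees on $(t_0,T)$ with the unique weak solution $\tilde f^{t_0}$ of \eqref{eq:main eqn} starting from $f(t_0,\cdot)$. Applying Theorem \ref{thm:smooth away from initial} to each such $\tilde f^{t_0}$ furnishes a further null set $\mathcal{N}_2(t_0)\subset (t_0,T)$ such that $\tilde f^{t_0} \in C^\infty((t_1,T)\times \mathbb{R}^3)$ for all $t_1 \in (t_0,T)\setminus \mathcal{N}_2(t_0)$. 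Through the identification $f = \tilde f^{t_0}$ on $(t_0,T)$, one transfers this smoothness to $f$ itself.

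To conclude that the smoothness holds for \emph{almost every} $t_1 \in (0,T)$ (not merely for those exceeding some fixed $t_0$), I would fix a countable sequence $t_0^{(k)} \searrow 0$ inside $(0,T)\setminus\mathcal{N}_1$ and form the null set $\mathcal{N} := \mathcal{N}_1 \cup \bigcup_{k}\mathcal{N}_2(t_0^{(k)})$. For any $t_1 \in (0,T)\setminus \mathcal{N}$, pick $k$ large enough that $t_0^{(k)} < t_1$; then $f = \tilde f^{t_0^{(k)}}$ on $(t_0^{(k)},T)$ and $\tilde f^{t_0^{(k)}} \in C^\infty((t_1,T)\times\mathbb{R}^3)$, so $f \in C^\infty((t_1,T)\times\mathbb{R}^3)$, which is the conclusion sought.

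The only conceivable obstacle is confirming that $f(t_0,\cdot)$ really satisfies the hypotheses needed to invoke Theorem \ref{thm:smooth away from initial}, namely non-negativity and $\rho(t_0,\cdot)\in[0,1]$; the latter is built into Definition \ref{def:concept of solution} via \eqref{eq:rho bounds from bbes}, and the former is inherited from the positivity-preserving construction of very weak solutions from \cite{bbes}, valid on the complement of a null set. Once this is in hand, the rest of the argument is purely organisational, and no additional PDE analysis is required.
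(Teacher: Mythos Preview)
Your proposal is correct and follows essentially the same route as the paper: invoke Theorem \ref{thm:weak strong uniqueness} to identify $f$ on $(t_0,T)$ with the unique weak solution emanating from $f(t_0,\cdot)$, and then apply Theorem \ref{thm:smooth away from initial} to that weak solution. Your handling of the null sets via a countable sequence $t_0^{(k)}\searrow 0$ is in fact more explicit than the paper's argument, which simply asserts the conclusion holds for a.e.~$t$ without spelling out the countable-union step.
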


We also record the following global-in-time boundedness for initial data in $L^\infty$, based on an iterative argument which had been used in the context of degenerate diffusion equations with drift (\textit{cf.}~\cite{KimZhangII}); this result is proved in \S \ref{subsec:alikakos}.

\begin{thm}[Global-in-time Boundedness for Bounded Initial Data]\label{thm:alikakos}
Assume $f_0 \in L^\infty(\Upsilon)$ is non-negative and satisfies \eqref{eq:initial data assumptions}, and let $f$ be the unique weak solution of \eqref{eq:main eqn} with initial data $f_0$. Then there holds the global-in-time estimate 
    \begin{equation*}
        \Vert f \Vert_{L^\infty((0,\infty)\times\Upsilon)} \leq C(\Pe,D_e,\Vert f_0 \Vert_{L^\infty(\Upsilon)}). 
    \end{equation*}
\end{thm}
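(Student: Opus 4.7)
The plan is to perform an Alikakos--Moser iteration on the $L^{p}(\Upsilon)$-norms of $f$, using the conservation of the $L^1$-norm as the base and the uniform bound $\rho\leq 1$ from \eqref{eq:rho bounds from bbes} to tame the non-local drift. The interior smoothness established in Theorem \ref{thm:smooth away from initial}, combined with a standard truncation/approximation of the initial datum, legitimises taking $\varphi = f^{2p-1}$ as a test function in the weak formulation of \eqref{eq:main eqn intro} for integer $p\geq 1$. This yields the energy identity
\begin{equation*}
\frac{1}{2p}\frac{d}{dt}\|f(t)\|_{L^{2p}(\Upsilon)}^{2p} + \frac{2p-1}{p^2}\|\nabla_\bxi f^p\|_{L^2(\Upsilon)}^2 = \frac{2p-1}{2p}\int_\Upsilon f^{2p}\,\e(\theta)\cdot\nabla_x\rho\d\bxi.
\end{equation*}

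The non-local drift enters only through the single factor $\nabla_x \rho$, and because $\nabla_x\cdot\e(\theta) = 0$ one may integrate by parts in $x$ to rewrite the right-hand side as $-\tfrac{2p-1}{p}\int_\Upsilon \rho\,\e(\theta)f^p\cdot\nabla_x f^p\d\bxi$. Using $|\rho\,\e(\theta)|\leq 1$ and Cauchy--Schwarz, the resulting quantity is controlled by $2\tfrac{2p-1}{p}\|f^p\|_{L^2(\Upsilon)}\|\nabla_\bxi f^p\|_{L^2(\Upsilon)}$, and Young's inequality with parameter of order $1/p$ absorbs the gradient term into the left-hand side, leaving
\begin{equation*}
\frac{d}{dt}\|f(t)\|_{L^{2p}(\Upsilon)}^{2p} + c_0\|\nabla_\bxi f^p\|_{L^2(\Upsilon)}^2 \leq Cp^2\|f^p\|_{L^2(\Upsilon)}^2,
\end{equation*}
for constants $c_0,C>0$ independent of $p$.

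To close the iteration I would couple this differential inequality with the Gagliardo--Nirenberg interpolation on $\Upsilon$, which for any $\delta>0$ reads $\|g\|_{L^2(\Upsilon)}^2 \leq \delta\|\nabla_\bxi g\|_{L^2(\Upsilon)}^2 + C\delta^{-3/2}\|g\|_{L^1(\Upsilon)}^2$; applied with $g=f^p$ and $\delta$ of order $1/p^2$ in the right-hand side and with $\delta$ of order $1$ to convert the dissipation into $A_p := \|f\|_{L^{2p}}^{2p}$, this yields, after a Gr\"onwall step,
\begin{equation*}
\sup_{t\geq 0}\|f(t)\|_{L^{2p}(\Upsilon)}^{2p} \leq \|f_0\|_{L^{2p}(\Upsilon)}^{2p} + Cp^{5}\sup_{t\geq 0}\|f(t)\|_{L^{p}(\Upsilon)}^{2p}.
\end{equation*}
Setting $p_k=2^k$ and $a_k = \sup_{t\geq 0}\|f(t)\|_{L^{p_k}(\Upsilon)}$, taking the $p_k$-th root gives the recursion $a_k \leq (1+Cp_{k-1}^5)^{1/p_k}\max(\|f_0\|_{L^\infty(\Upsilon)},a_{k-1})$. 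The divergence structure of \eqref{eq:main eqn intro} together with periodic boundary conditions forces $\|f(t)\|_{L^1(\Upsilon)}=\|f_0\|_{L^1(\Upsilon)}$, anchoring the iteration at $a_0$, and the series $\sum_k p_k^{-1}\log(1+Cp_{k-1}^5)$ converges; passing to the limit $k\to\infty$ gives the desired bound, with constant depending on $\|f_0\|_{L^\infty(\Upsilon)}$, $\Pe$ and $D_e$ (the last two having re-entered through the rescaling which led to \eqref{eq:main eqn intro}).

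The main obstacle is the handling of the non-local, non-divergence-free drift in the energy identity: the integral $\int f^{2p}\e(\theta)\cdot\nabla_x\rho\d\bxi$ is not directly integrable at this level of regularity, and the only reason it can be tamed is the uniform bound $\rho\leq 1$, which allows the $\nabla_x\rho$ to be transferred onto $f^p$ by integration by parts and absorbed by the diffusion. A subsidiary algebraic point, essential for the scheme to produce a \emph{time-uniform} (rather than time-growing) bound, is that the constants in the differential inequality grow only polynomially in $p$, so that the iteration product converges; this is precisely the mechanism of the Alikakos--Moser argument in \cite{KimZhangII}.
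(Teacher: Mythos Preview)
Your proposal is correct and follows essentially the same Alikakos--Moser strategy as the paper: test with $f^{n-1}$, use $|U|\leq 1$ to bound the drift by $\|f^{n/2}\|_{L^2}\|\nabla_{\bxi} f^{n/2}\|_{L^2}$, absorb half the gradient via Young, and then use Gagliardo--Nirenberg to interpolate $\|f^{n/2}\|_{L^2}$ between $\|\nabla_{\bxi} f^{n/2}\|_{L^2}$ and $\|f^{n/2}\|_{L^1}$, yielding a recursive differential inequality along the dyadic scale $n=2^k$. The only cosmetic difference is that the paper packages the final step by invoking the ready-made iteration lemma of \cite{KimZhangII} (Lemma~\ref{lem:iteration kz}) for the ODE $\frac{\der}{\der t}A_k + C_0 A_k \leq C_1^k A_{k-1}^2$, whereas you perform the Gr\"onwall step explicitly and iterate the resulting sup-inequality by hand; both routes are standard and equivalent.
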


The final section of this manuscript is concerned with the long-time behaviour of solutions of equation \eqref{eq:main eqn}. Our main results are concerned with the regularity of solutions of the stationary elliptic problem 
\begin{equation}\label{eq:equilibrium eqn}
    \Pe \, \dv\big( (1-\rho_\infty)f_\infty \e(\theta) \big) = D_e \Delta f_\infty + \partial^2_\theta f_\infty, 
\end{equation}
and the convergence of the time-dependent solutions to such stationary solutions.

 \begin{thm}[Smoothness of Stationary States]\label{thm:smooth stationary states}
     Let $f_\infty \in H^1_\per(\Upsilon)$ with $\rho_\infty = \int_0^{2\pi} f_\infty \d \theta \in [0,1]$ be a non-negative periodic weak solution of \eqref{eq:equilibrium eqn}, \textit{i.e.}~for all $\phi \in H^1_\per(\Upsilon)$ there holds 
     \begin{equation*}
        \Pe \int_\Upsilon (1-\rho_\infty)f_\infty \e(\theta) \cdot \nabla \phi \d \bxi = D_e \int_\Upsilon \nabla_{\bxi} f_\infty \cdot \nabla_{\bxi} \phi \d \bxi. 
     \end{equation*}
     Then, $f_\infty$ is a smooth periodic function on $\mathbb{R}^3$. 
 \end{thm}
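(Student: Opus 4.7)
My plan is to reduce the problem to the parabolic regularity result, Theorem \ref{thm:smooth away from initial}, which already handles the analogous time-dependent equation. Fix any $T>0$ and define $f:\Upsilon_T\to\mathbb{R}$ by $f(t,\bxi):=f_\infty(\bxi)$. I would first verify that $f$ is a weak solution of \eqref{eq:main eqn} with initial datum $f_\infty$ in the sense of Definition \ref{def:concept of solution}(1). The regularity requirements are immediate from $f_\infty\in H^1_\per(\Upsilon)$: one has $f\in C([0,T];L^2_\per(\Upsilon))\cap L^2(0,T;H^1_\per(\Upsilon))$ and $\partial_t f=0\in L^2(0,T;(H^1_\per)'(\Upsilon))$. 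The constraint \eqref{eq:rho bounds from bbes} follows from $\rho(t,x)=\rho_\infty(x)\in[0,1]$, and the initial-data assumption \eqref{eq:initial data assumptions} is met since $f_\infty\geq 0$. Finally, plugging $\partial_t f=0$ into \eqref{eq:weak_form_def} and testing against an arbitrary $\varphi\in L^2(0,T;H^1_\per(\Upsilon))$ reduces, via Fubini, to an integral in time of the stationary weak formulation assumed for $f_\infty$.

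Having established that $f$ is a weak solution with initial data $f_\infty$, I would invoke the uniqueness of weak solutions from \cite{bbes} to conclude that $f$ is \emph{the} weak solution. Theorem \ref{thm:smooth away from initial} then yields $f\in C^\infty((t_0,T)\times\mathbb{R}^3)$ for a.e.~$t_0\in(0,T)$. Since $f$ is time-independent, fixing any such $t_0$ gives $f_\infty\in C^\infty(\mathbb{R}^3)$, as required.

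The main (mild) obstacle in executing this plan is purely bookkeeping: ensuring that the stationary weak formulation supplied for $f_\infty$ is genuinely equivalent to the time-independent instance of \eqref{eq:weak_form_def}, which rests on a standard density argument approximating space-time test functions by tensor products $\varphi(t,\bxi)=\alpha(t)\phi(\bxi)$. An alternative, more self-contained route would bypass the reduction and directly adapt the bootstrap of \S \ref{sec:higher reg} to the elliptic setting: first establish $f_\infty\in L^\infty$ via a De Giorgi iteration for elliptic sub/supersolutions, exploiting the uniform $L^\infty$ bound on the drift $(1-\rho_\infty)\e(\theta)$; then iteratively upgrade the integrability of $\rho_\infty=\int_0^{2\pi} f_\infty\,\mathrm{d}\theta$ by means of a Calder\'on--Zygmund estimate analogous to that of Appendix \ref{app:CZ periodic}, and feed the resulting regularity of the drift back into the equation through elliptic Schauder/$W^{k,p}$ theory to gain successive derivatives on $f_\infty$. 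This second route is longer but conceptually parallel and would yield the same conclusion.
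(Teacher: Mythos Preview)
Your primary approach---extending $f_\infty$ trivially in time and invoking Theorem \ref{thm:smooth away from initial}---is correct and rather elegant. The verification that the time-independent extension satisfies the weak formulation \eqref{eq:weak_form_def} goes through exactly as you describe, and once uniqueness identifies it as \emph{the} weak solution, Theorem \ref{thm:smooth away from initial} delivers smoothness immediately. This is a genuine shortcut: it recycles the parabolic machinery wholesale and avoids redoing any estimates.

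The paper, however, takes your ``alternative route'' and carries out a direct elliptic bootstrap, which is structurally simpler than the parabolic one in \S\ref{sec:higher reg}. After Moser iteration gives $f_\infty\in L^\infty$, the paper tests against $\Delta_{\bxi} f_\infty$ to reach $H^2$, then exploits Sobolev and Morrey embeddings (crucially using that $\rho_\infty$ lives on the two-dimensional domain $\Omega$) to push $\nabla\rho_\infty$ and then $\nabla f_\infty$ into $L^\infty$. From there the equation is differentiated in \emph{space} (not time, as in \S\ref{sec:higher reg}) and the Calder\'on--Zygmund estimate of Lemma \ref{lem:CZ periodic} is iterated. So while you correctly identify the alternative as ``conceptually parallel'', the actual mechanism differs: the elliptic setting permits a cleaner route via stationary Sobolev embeddings, with no need for the repeated De Giorgi iterations on time-differentiated equations or the delicate $L^8$ integrability upgrades for $\dot\rho$ that drive the parabolic proof. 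Your reduction trades this self-contained argument for brevity; both are valid, and yours is arguably the more efficient use of what has already been established.
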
   

\begin{thm}[Convergence to Constant Stationary States for Small P\'eclet Number]\label{thm:small peclet conv}
    Assume $f_0 \in L^2_\per(\Upsilon)$ is non-negative and satisfies \eqref{eq:initial data assumptions}, and let $f$ be the unique solution of \eqref{eq:main eqn} with initial data $f_0$. Assume that there holds $$|\Pe| < \frac{\min\{D_e,1\}}{2\sqrt{2}\pi C_P(1+\langle f_0 \rangle)},$$ 
where    $C_P$ is the Poincar\'e constant associated to $\Upsilon$, and define 
\begin{equation}\label{eq:kappa def}
    \kappa := \frac{1}{2}\Big(\frac{1}{2}C_P^{-2}\min\{D_e,1\} - \frac{(2\pi)^2\Pe^2(1+\langle f_0\rangle)^2}{\min\{D_e,1\}}\Big)>0. 
\end{equation}
Then, 
    \begin{equation*}
        \Vert f(t,\cdot) - \langle f_0 \rangle \Vert_{L^2(\Upsilon)} \leq e^{-\kappa t}\Vert f_0 - \langle f_0 \rangle \Vert_{L^2(\Upsilon)} \quad \text{for all } t \geq 0. 
    \end{equation*}
\end{thm}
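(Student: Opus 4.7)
The plan is a standard $L^2$ energy estimate on the deviation $g:=f-\langle f_0\rangle$ from the constant mean, closed via integration by parts and Poincaré in the space-angle variable. First, integrating \eqref{eq:main eqn} over $\Upsilon$ and using periodicity annihilates both divergence and Laplacian contributions, so mass is conserved: $\langle f(t)\rangle=\langle f_0\rangle$ for every $t\ge 0$; in particular, $g$ has zero mean over $\Upsilon$ for every $t$, and the Poincaré inequality $\|g\|_{L^2(\Upsilon)}\le C_P\|\nabla_{\bxi}g\|_{L^2(\Upsilon)}$ is available. Since $\langle f_0\rangle$ is a constant, $g\in L^2(0,T;H^1_{\per}(\Upsilon))$ is an admissible test function in \eqref{eq:weak_form_def}, and (using $\nabla_{\bxi}g=\nabla_{\bxi}f$) this yields
\[
\frac{1}{2}\frac{d}{dt}\|g\|_{L^2(\Upsilon)}^{2}+D_e\|\nabla f\|_{L^2(\Upsilon)}^{2}+\|\partial_\theta f\|_{L^2(\Upsilon)}^{2}=\Pe\int_\Upsilon(1-\rho)f\,\e(\theta)\cdot\nabla g\,\d\bxi,
\]
with left-hand dissipation at least $\min\{D_e,1\}\,\|\nabla_\bxi g\|_{L^2(\Upsilon)}^{2}$.

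The crux is the drift bound. Splitting $(1-\rho)f=(1-\rho)g+(1-\rho)\langle f_0\rangle$, the $g$-piece contributes at most $|\Pe|\,\|g\|_{L^2}\|\nabla_\bxi g\|_{L^2}$ by Cauchy--Schwarz and $|1-\rho|\le 1$ (from \eqref{eq:rho bounds from bbes}). For the $\langle f_0\rangle$-piece, integrating by parts in $x$ (using periodicity and $\dv_x\e(\theta)=0$) produces
\[
\int_\Upsilon(1-\rho)\,\e(\theta)\cdot\nabla g\,\d\bxi=\int_\Omega\nabla\rho(x)\cdot\int_0^{2\pi}g(x,\theta)\,\e(\theta)\,\d\theta\,\d x,
\]
after which the two Jensen-type bounds $\|\nabla\rho\|_{L^2(\Omega)}\le\sqrt{2\pi}\,\|\nabla f\|_{L^2(\Upsilon)}$ and $\big\|\int_0^{2\pi}g(\cdot,\theta)\,\e(\theta)\,\d\theta\big\|_{L^2(\Omega)}\le\sqrt{2\pi}\,\|g\|_{L^2(\Upsilon)}$ (both by Cauchy--Schwarz in $\theta$), combined with $2\pi\ge 1$, give
\[
\Big|\Pe\int_\Upsilon(1-\rho)f\,\e(\theta)\cdot\nabla g\,\d\bxi\Big|\le 2\pi|\Pe|(1+\langle f_0\rangle)\,\|g\|_{L^2(\Upsilon)}\|\nabla_\bxi g\|_{L^2(\Upsilon)}.
\]
A Young's inequality with a suitable weight then absorbs a share of the dissipation into this drift bound, and Poincaré applied to the remaining share closes the estimate as $\frac{d}{dt}\|g\|_{L^2(\Upsilon)}^{2}\le-2\kappa\,\|g\|_{L^2(\Upsilon)}^{2}$ with $\kappa$ as in \eqref{eq:kappa def}; the smallness hypothesis on $|\Pe|$ is precisely what forces $\kappa>0$, and Gronwall concludes.

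The main technical subtlety is handling the $\langle f_0\rangle$-contribution to the non-local, non-divergence-free drift: a direct Cauchy--Schwarz would leave a constant term not absorbable by the dissipation. It is the integration by parts exploiting $\dv_x\e(\theta)=0$, combined with the velocity-averaging-type estimate on $\nabla\rho$, that transfers this constant onto an additional gradient factor and thereby allows the closure.
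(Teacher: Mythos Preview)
Your proof is correct and follows essentially the same route as the paper: define the zero-mean deviation, test the equation against it, split the drift into the fluctuation piece and the constant $\langle f_0\rangle$ piece, integrate by parts on the latter to produce a bilinear term $\|g\|_{L^2}\|\nabla_{\bxi} g\|_{L^2}$, then close with Young and Poincar\'e. The only cosmetic difference is that in the $\langle f_0\rangle$-contribution the paper integrates by parts once more (moving the gradient from $\rho$ back onto $w$ and bounding $\|\rho-\langle\rho_0\rangle\|_{L^2}$ via Jensen), whereas you stop one step earlier and bound $\|\nabla\rho\|_{L^2(\Omega)}$ in terms of $\|\nabla f\|_{L^2(\Upsilon)}$; both yield the same $2\pi\|g\|_{L^2}\|\nabla_{\bxi} g\|_{L^2}$ factor, and your version, by applying Young once to the combined drift rather than twice, actually gives a slightly sharper rate that still dominates the stated $\kappa$.
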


 \begin{rem}[Stationary States]
     We remark that Theorem \ref{thm:small peclet conv} shows that, under the aforementioned assumptions on the initial data and on the P\'eclet number, all weak solutions of \eqref{eq:main eqn} converge to a constant stationary state in the limit of infinite time; by Theorem \ref{thm:weak strong uniqueness} this is also the case for all very weak solutions. We note that this result is in accordance with the linear stability analysis performed in \cite[\S 3]{bbesModel}. For large P\'eclet number, the simulations in \cite[\S 4]{bbesModel} suggest that phase separation occurs, and we do not expect convergence to a constant stationary state. The study of the long-time behaviour of the solutions for large P\'eclet number will be the subject of future investigations. 
 \end{rem}

In this work, we shall frequently employ a well-known interpolation inequality; we refer to it throughout the paper as the \emph{Interpolation Lemma}. 

\begin{lemma}[DiBenedetto, Proposition 3.2 of \S 1, \cite{DiBenedetto}]\label{lem:dibenedetto classic}
    Let $d \in \mathbb{N}$ and $\omega \subset \mathbb{R}^d$ have piecewise smooth boundary, and let $p,m \geq 1$. There exists a positive constant $C$ depending only on $d,p,m$ and the structure of $\partial \omega$ such that, for all $$v \in L^\infty(0,T;L^m(\omega)) \cap L^p(0,T;W^{1,p}(\omega)) =: V^{m,p},$$ there holds 
    \begin{equation*}
        \Vert v \Vert_{L^q((0,T)\times\omega)} \leq C \bigg( 1 + \frac{T}{|\omega|^{\frac{d(p-m)+mp}{md}}} \bigg)^{\frac{1}{q}} \Vert v \Vert_{V^{m,p}}, \qquad q = p\frac{d+m}{d}. 
    \end{equation*}
\end{lemma}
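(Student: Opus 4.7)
My plan is to combine the spatial Gagliardo--Nirenberg--Sobolev inequality with integration in time. First, for almost every $t \in (0,T)$, I would apply the spatial Gagliardo--Nirenberg inequality on $\omega$ to $v(t,\cdot)$: assuming $p<d$ (the case $p\geq d$ is an easier variant via the embedding $W^{1,p}(\omega)\hookrightarrow L^r(\omega)$ for any finite $r$), for $\theta\in(0,1)$ satisfying $1/q=\theta(1/p-1/d)+(1-\theta)/m$, one has
\[
\|v(t,\cdot)\|_{L^q(\omega)}\leq C\,\|v(t,\cdot)\|_{W^{1,p}(\omega)}^{\theta}\,\|v(t,\cdot)\|_{L^m(\omega)}^{1-\theta},
\]
with constant depending only on $d$, $p$, $m$, and the regularity of $\partial\omega$.

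Second, I would fix the free parameters so that the resulting time integral matches the assumed integrability. Choosing $q\theta=p$ ensures that, after raising to the $q$-th power, the spatial $W^{1,p}$ norm appears with exponent $p$, exactly the integrability provided by $v\in L^p(0,T;W^{1,p}(\omega))$. Solving this jointly with the dimensional identity above forces uniquely $q=p(d+m)/d$ and $q(1-\theta)=mp/d$, as in the statement. Raising to the $q$-th power, integrating in $t$, and bounding $\|v(t,\cdot)\|_{L^m(\omega)}^{mp/d}$ by its supremum in $t$ yields
\[
\|v\|_{L^q((0,T)\times\omega)}^{q}\leq C\,\|v\|_{L^\infty(0,T;L^m(\omega))}^{mp/d}\int_0^T\|v(t,\cdot)\|_{W^{1,p}(\omega)}^{p}\,dt.
\]

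Third, I would recast this into the stated form. Splitting $\|v\|_{W^{1,p}}^p=\|\nabla v\|_{L^p}^p+\|v\|_{L^p}^p$, the gradient part contributes directly to $\|v\|_{V^{m,p}}^q$, whereas the zero-order term $\int_0^T\|v\|_{L^p(\omega)}^p\,dt$ must be traded against $\|v\|_{L^\infty_tL^m_x}$. Since the embedding between $L^p(\omega)$ and $L^m(\omega)$ depends on the sign of $p-m$, careful H\"older interpolation in space produces a power of $|\omega|$, and multiplication by the length $T$ of the time interval then gives the additive correction with the prescribed exponent. The main obstacle is precisely this exponent bookkeeping: the cleanest route is to rescale to a domain of unit measure on the unit time interval, at which point the inequality becomes scale-invariant and the exponent $(d(p-m)+mp)/(md)$ is dictated by dimensional analysis; taking the $q$-th root and collapsing all norms into $\|v\|_{V^{m,p}}$ then gives the stated factor $(1+T/|\omega|^{(d(p-m)+mp)/(md)})^{1/q}$.
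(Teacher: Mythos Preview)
The paper does not supply its own proof of this lemma: it is quoted verbatim as a known result from DiBenedetto's monograph (Proposition~3.2 of \S 1 therein), so there is nothing in the paper to compare against. Your argument is the standard one and is essentially how the result is proved in DiBenedetto's book: apply the spatial Gagliardo--Nirenberg inequality at each fixed time, choose the interpolation parameter so that $q\theta=p$ (which forces $q=p(d+m)/d$), raise to the $q$-th power and integrate in $t$, then handle the zero-order contribution $\int_0^T\|v\|_{L^p(\omega)}^p\,dt$ by trading $L^p$ against $L^m$ in space, which produces the factor involving $T$ and $|\omega|$. Your sketch is correct.
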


We shall also employ the following version of the Calder\'on--Zygmund Theorem applicable to our periodic setting. The proof is delayed to Appendix \ref{app:CZ periodic}. We do not claim that this result is sharp nor that it is novel; it is merely sufficient for our purposes. 

\begin{lemma}[Periodic Calder\'on--Zygmund Inequality]\label{lem:CZ periodic}
   Let $p \in (1,\infty)$. There exists a positive constant $C=C(p,\Upsilon)$ such that for all $v \in W^{1,p}_\per(\Upsilon)$ with $\Delta_{\bxi} v \in L^p(\Upsilon)$, there holds 
    \begin{equation*}
        \Vert \nabla^2_{\bxi} v \Vert_{L^p(\Upsilon)} \leq C\Big( \Vert \Delta_{\bxi} v \Vert_{L^p(\Upsilon)} + \Vert v \Vert_{W^{1,p}(\Upsilon)} \Big). 
    \end{equation*}
    In the case $p=2$, there holds 
     \begin{equation*}
        \Vert \nabla^2_{\bxi} v \Vert_{L^2(\Upsilon)} = \Vert \Delta_{\bxi} v \Vert_{L^2(\Upsilon)}. 
    \end{equation*}
\end{lemma}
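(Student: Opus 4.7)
The plan is to handle the $p=2$ case via Fourier series and the general $p \in (1,\infty)$ case by a cutoff reduction to the classical Calder\'on--Zygmund inequality on $\mathbb{R}^3$.

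For $p = 2$, I would expand $v$ in Fourier series, $v(\bxi) = \sum_{k \in \mathbb{Z}^3} \hat{v}(k) e^{\i k \cdot \bxi}$, and apply Parseval's identity. Since $\sum_{i,j=1}^3 k_i^2 k_j^2 = |k|^4$, a direct computation yields
\begin{equation*}
\sum_{i,j=1}^3 \Vert \partial_i \partial_j v \Vert_{L^2(\Upsilon)}^2 = (2\pi)^3 \sum_{k \in \mathbb{Z}^3} |k|^4 |\hat{v}(k)|^2 = \Vert \Delta_{\bxi} v \Vert_{L^2(\Upsilon)}^2,
\end{equation*}
which is the claimed equality. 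Observe that no boundary term appears thanks to periodicity.

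For general $p \in (1,\infty)$, I would first establish the estimate for $v \in C^\infty_\per(\Upsilon)$ and then pass to the limit by density. Given such a smooth periodic $v$, extend it periodically to all of $\mathbb{R}^3$, fix a slightly larger open cube $\tilde{\Upsilon} \supsetneq \overline{\Upsilon}$, and choose a cutoff $\chi \in C_c^\infty(\tilde{\Upsilon})$ with $\chi \equiv 1$ on an open neighbourhood of $\overline{\Upsilon}$. Then $w := \chi v \in C_c^\infty(\mathbb{R}^3)$, so the classical Calder\'on--Zygmund theorem on $\mathbb{R}^3$ (bounding $\partial_i \partial_j = -R_i R_j \Delta$ via the $L^p$-boundedness of the Riesz transforms) gives $\Vert \nabla^2 w \Vert_{L^p(\mathbb{R}^3)} \leq C_p \Vert \Delta w \Vert_{L^p(\mathbb{R}^3)}$. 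Expanding
\begin{equation*}
\Delta w = \chi \, \Delta_{\bxi} v + 2 \nabla \chi \cdot \nabla_{\bxi} v + v \, \Delta \chi,
\end{equation*}
and using periodicity to bound the $L^p(\tilde{\Upsilon})$ norm of any term by a constant multiple of its $L^p(\Upsilon)$ norm (the enlarged cube $\tilde{\Upsilon}$ is covered by finitely many translates of $\Upsilon$), I would deduce
\begin{equation*}
\Vert \nabla^2_{\bxi} v \Vert_{L^p(\Upsilon)} \leq \Vert \nabla^2 w \Vert_{L^p(\mathbb{R}^3)} \leq C \big( \Vert \Delta_{\bxi} v \Vert_{L^p(\Upsilon)} + \Vert v \Vert_{W^{1,p}(\Upsilon)} \big),
\end{equation*}
with $C = C(p,\chi,\Upsilon)$, which is the desired inequality for smooth $v$.

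To extend the estimate to all $v \in W^{1,p}_\per(\Upsilon)$ with $\Delta_{\bxi} v \in L^p(\Upsilon)$, I would approximate $v$ by convolution with a smooth periodic mollifier; since such mollification commutes with the Laplacian and preserves periodicity, the approximants $v_n \in C^\infty_\per(\Upsilon)$ satisfy $v_n \to v$ in $W^{1,p}$ and $\Delta_{\bxi} v_n \to \Delta_{\bxi} v$ in $L^p$, so both sides of the inequality pass to the limit. The main obstacle is arguably this density/approximation step, as one must check that the mollified functions satisfy both convergences simultaneously; the rest is a routine deployment of the flat-space Calder\'on--Zygmund theorem together with the elementary observation that Fourier multipliers of the form $k_i k_j/|k|^2$ reproduce the $L^2$ identity exactly.
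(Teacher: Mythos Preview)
Your proposal is correct and follows essentially the same approach as the paper: a cutoff reduction to the classical Calder\'on--Zygmund inequality on $\mathbb{R}^3$ for general $p$, followed by a mollification density argument, and a direct computation for $p=2$. The only cosmetic difference is that the paper obtains the $p=2$ identity by integrating by parts twice (periodicity kills the boundary terms), whereas you use Parseval on the Fourier side; these are of course equivalent.
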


To conclude this subsection, we briefly outline the mechanism by which the equation improves the regularity of its solution. 

\begin{rem}[Regularity Bootstrap]
    One begins by applying De Giorgi's method/Moser's iteration technique to \eqref{eq:main eqn intro} and obtain that $f \in L^\infty((t,T)\times\Upsilon)$ for a.e.~$t>0$. This boundedness is then sufficient to perform the classical $H^2$-type estimate on \eqref{eq:main eqn intro}; yielding boundedness in $L^2((t,T)\times\Upsilon)$ both for $\Delta_{\bxi} f$ and $\partial_t f =: \dot{f}$. Further work shows that $\dot{f} \in L^\infty(t,T;L^2(\Upsilon)) \cap L^2(t,T;H^1(\Upsilon))$ satisfies 
    \begin{equation}\label{eq:time deriv eqn in intro}
        \partial_t \dot{f} + \dv\big( (\dot{f}(1-\rho) - f \dot{\rho}) \e(\theta) \big) = \Delta_{\bxi} \dot{f}; 
    \end{equation}
   and, \emph{a priori}, the boundedness of $\dot{\rho}$ in $L^2((t,T)\times\Upsilon)$ is insufficient to apply De Giorgi's method to the above and obtain boundedness of $\dot{f}$ in $L^\infty((t,T)\times\Upsilon)$. Indeed, it appears at first glance from the formula $\dot{\rho} = \int_0^{2\pi} \dot{f} \d \theta$ that $\dot{\rho}$ inherits the same boundedness properties as $\dot{f}$ and none more. However, the dimensionality reduction in the angle-independent density plays a crucial role. Using the Interpolation Lemma \ref{lem:dibenedetto classic}, one obtains $\dot{f}  \in L^{10/3}(\Upsilon_T)$, as $\Upsilon \subset \mathbb{R}^3$; see Proof of Lemma \ref{lem:eqn for f dot}. Meanwhile, using this very same interpolation result, since $\Omega \subset \mathbb{R}^2$, we obtain that $\dot{\rho} \in L^\infty(0,T;L^2(\Omega)) \cap L^2(0,T;H^1(\Omega))$ in fact belongs to $L^{4}(\Omega_T)$. This improved integrability in $\dot{\rho}$ leads to classical $H^2$-type estimate on the equation 
\begin{equation*}
    \partial_t \dot{\rho} + \dv\big( \p \dot{\rho} + (1-\rho) \dot{\p} \big) = \Delta \dot{\rho}, 
    \quad \text{ where } \quad \p(t,x) = \int_0^{2\pi} f(t,x,\theta) \e(\theta) \d \theta,
\end{equation*}
    yielding higher integrability of $\dot{\rho}$ again by interpolation. We deduce a sufficient gain of integrability in the non-local drift term of \eqref{eq:time deriv eqn in intro} to then apply De Giorgi's method again and obtain boundedness in $L^\infty$ of the time derivative $\dot{f}$. This procedure is then performed iteratively for all time-derivatives $\partial^n_t f$, from which we deduce smoothness of the solution away from the initial time. 
\end{rem}

\section{Preliminary Notions}\label{sec:weak solutions}

In this section, we present the main rescaling lemma for De Giorgi's method, for which we introduce an alternative weak formulation of the equation; we explain our reasons for doing so in the paragraphs that follow. 

The strategy of our regularity analysis is to use the method of De Giorgi to obtain interior regularity, which involves ``zooming in'' on subcylinders to obtain local boundedness. Central to this strategy is the appropriate parabolic rescaling of the functions at hand to deduce the required \emph{localised} estimates by first obtaining analogous bounds on the unit cylinder $Q_1$. 

It is apparent that the aforementioned rescaling affects the periodicity of the test functions that can be inserted into the weak formulation of Definition \ref{def:concept of solution}. In turn, it is more convenient to employ an alternative weak formulation, for which we do not require the test functions to be periodic; this is encapsulated in the following lemma, the proof of which is delayed to Appendix \ref{app:alternative weak formulation}.

\begin{lemma}[Alternative Weak Formulation]\label{lem:equiv weak}
    Assume $f_0 \in L^2_\per(\Upsilon)$ is non-negative and satisfies \eqref{eq:initial data assumptions}, and let $f \in C([0,T];L^2_\per(\Upsilon)) \cap L^2(0,T;H^1_\per(\Upsilon))$ with $\partial_t f \in L^2(0,T;(H^1_\per)'(\Upsilon))$ be the unique weak solution of \eqref{eq:main eqn} with initial data $f_0$. Then, for all $\varphi \in C^\infty([0,T]\times\mathbb{R}^3)$ with $\varphi(t,\cdot) \in C^\infty_c(\mathbb{R}^3)$ for all $t$ and for a.e.~$t_1,t_2 \in [0,T]$, there holds 
    \begin{equation}\label{eq:equiv weak form}
       \begin{aligned}
            \int_{t_1}^{t_2} \int_{\mathbb{R}^3} f \partial_t \varphi \d \bxi \d t + \int_{t_1}^{t_2} \int_{\mathbb{R}^3} (1-\rho) f \e(\theta) \cdot \nabla \varphi \d \bxi \d t &- \int_{t_1}^{t_2} \int_{\mathbb{R}^3} \nabla_{\bxi} f \cdot \nabla_{\bxi} \varphi \d \bxi \d t \\ &= \int_{\mathbb{R}^3} f \varphi \d \bxi \Big|_{t_2}  -  \int_{\mathbb{R}^3} f \varphi \d \bxi \Big|_{t_1}. 
        \end{aligned}
    \end{equation}
\end{lemma}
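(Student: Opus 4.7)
The plan is to derive the identity \eqref{eq:equiv weak form} directly from the original weak formulation \eqref{eq:weak_form_def} via periodization of the test function. Given $\varphi \in C^\infty([0,T]\times\mathbb{R}^3)$ with $\varphi(t,\cdot)\in C^\infty_c(\mathbb{R}^3)$, I define
\[
\tilde{\varphi}(t,\bxi) := \sum_{k\in\mathbb{Z}^3} \varphi(t,\bxi+2\pi k),
\]
which is a locally finite sum thanks to the $\bxi$-compact support of $\varphi(t,\cdot)$. Then $\tilde{\varphi}$ is smooth, $\Upsilon$-periodic in $\bxi$, and belongs to $L^2(0,T; H^1_\per(\Upsilon))$ with $\partial_t\tilde{\varphi}\in L^2(0,T;L^2_\per(\Upsilon))$, so it is an admissible test function in Definition \ref{def:concept of solution}.

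The central reduction identity is that for any $\Upsilon$-periodic function $g\in L^1_\loc(\mathbb{R}^3)$, decomposing $\mathbb{R}^3=\bigsqcup_{k\in\mathbb{Z}^3}(\Upsilon+2\pi k)$ and using the periodicity of $g$ yields
\[
\int_\Upsilon g(\bxi)\,\tilde{\varphi}(t,\bxi)\d\bxi = \int_{\mathbb{R}^3} g(\bxi)\,\varphi(t,\bxi)\d\bxi.
\]
Since periodization commutes with differentiation, this identity applies equally with $g\in\{f,\,\nabla_{\bxi}f,\,(1-\rho)f\e(\theta)\}$ (all $\Upsilon$-periodic by assumption) and with $\tilde{\varphi}$ replaced by any of its spatial or temporal partial derivatives. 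Applied termwise, it converts each spatial integral in \eqref{eq:weak_form_def} from an integral over $\Upsilon$ into one over $\mathbb{R}^3$ against $\varphi$.

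To recover the endpoint contributions, I would localise in time by testing \eqref{eq:weak_form_def} against $\tilde{\varphi}\,\chi_{(t_1,t_2)}$, rigorously justified by approximating $\chi_{(t_1,t_2)}$ with smooth temporal cutoffs and passing to the limit using the continuity $f\in C([0,T];L^2_\per(\Upsilon))$. The standard Lions--Magenes integration-by-parts formula (valid because $f\in L^2(0,T;H^1_\per)$ with $\partial_t f\in L^2(0,T;(H^1_\per)')$) then gives
\[
\int_{t_1}^{t_2}\langle \partial_t f,\tilde{\varphi}\rangle\d t = \Big[\int_\Upsilon f\,\tilde{\varphi}\d\bxi\Big]_{t=t_1}^{t=t_2} - \int_{t_1}^{t_2}\!\!\int_\Upsilon f\,\partial_t\tilde{\varphi}\d\bxi\d t,
\]
and translating every spatial integral over $\Upsilon$ to one over $\mathbb{R}^3$ via the reduction identity produces \eqref{eq:equiv weak form}.

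The argument is essentially bookkeeping; the only minor obstacle is the careful justification of the temporal localisation, which reduces to approximating $\chi_{(t_1,t_2)}$ by smooth cutoffs and using the continuity of $f$ in $t$ with values in $L^2_\per(\Upsilon)$. In fact, this very continuity ensures that the boundary terms in \eqref{eq:equiv weak form} are well-defined for \emph{every} pair $t_1,t_2\in[0,T]$, so the conclusion could be slightly strengthened beyond the almost-everywhere statement.
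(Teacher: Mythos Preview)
Your proposal is correct but takes a genuinely different route from the paper's own proof. The paper returns to the Galerkin approximants $f_n$ from the construction in \cite{bbes}: since each $f_n$ satisfies the equation pointwise and is smooth and periodic, one can integrate against a compactly supported $\varphi$ directly, integrate by parts, and then pass to the limit using the strong convergences $f_n\to f$ in $L^2(0,T;H^1_\loc)$ and $\rho_n\to\rho$ in $L^2(0,T;H^1_\loc)$. Your argument instead works entirely at the level of the weak formulation: you periodise $\varphi$ to obtain an admissible periodic test function $\tilde\varphi$, invoke the unfolding identity $\int_\Upsilon g\,\tilde\varphi=\int_{\mathbb{R}^3}g\,\varphi$ for periodic $g$, and handle the time-boundary terms via the standard Lions--Magenes integration-by-parts. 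Your approach is more self-contained---it does not rely on the specific construction of $f$ and would apply to any function satisfying the periodic weak formulation---whereas the paper's approach is perhaps more natural given that the Galerkin scheme is already in hand and the required convergences were established in \cite{bbes}. Your observation that the identity in fact holds for \emph{every} pair $t_1,t_2\in[0,T]$ (not merely a.e.) is also correct, since $t\mapsto\int_\Upsilon f\tilde\varphi\,\d\bxi$ is absolutely continuous.
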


With this alternative weak formulation at our disposal, we define our notion of weak subsolution for the generalisation 
\begin{equation}\label{eq:rescaled eqn with V}
    \partial_t f + \dv_{\bxi} (Uf + V) = \Delta_{\bxi}f 
\end{equation}
of the equation \eqref{eq:main eqn intro}, where $V \in L^q(\Upsilon_T)$ for $q>5$, which is the form which we shall use when employing De Giorgi's method. The reason as to why we generalise the analysis to include the term $V$ will be made clear in \S \ref{sec:higher reg}. 

\begin{definition}[Weak Subsolution]\label{def:weak subsol}
We say that $f \in C([0,T];L^2_\loc(\mathbb{R}^3)) \cap L^2(0,T;H^1_\loc(\mathbb{R}^3))$ with $\partial_t f \in L^2(0,T;(H^1_\loc)'(\mathbb{R}^3))$ is a \emph{weak subsolution} of \eqref{eq:rescaled eqn with V} if, for all non-negative $\varphi \in 
C^\infty(0,T;C^\infty_c(\mathbb{R}^3))$ and for a.e.~$t_1,t_2 \in [0,T]$, there holds 
    \begin{equation}\label{eq:rescaled eqn int form}
    \begin{aligned}    -\int_{t_1}^{t_2}\int_{B_1} f \partial_t \varphi \d \bxi & \d t + \int_{t_1}^{t_2}\int_{B_1} \nabla_{\bxi} f \cdot \nabla_{\bxi} \varphi \d \bxi \d t \\ 
    &\leq \int_{t_1}^{t_2}\int_{B_1} (f U + V) \cdot \nabla_{\bxi} \varphi \d \bxi \d t - \int_{B_1} f \varphi \d \bxi \Big|_{t_2}  +  \int_{B_1} f \varphi \d \bxi \Big|_{t_1} . 
    \end{aligned}
    \end{equation}
\end{definition}

\begin{rem}[Parabolic Norm]
We recall that, given admissible initial data $f_0$, it was shown in \cite[Proof of Theorem 3.1, (3.15)]{bbes} that the weak solution $f$ admits, for some positive constant $C$ independent of $T$, the estimate 
\begin{equation}\label{eq:estimation_parabolic}
    \Vert f \Vert^2_{\mathscr{P}} \leq \frac{C}{\pi \, \min \{1,D_e\}} e^{2CT\frac{\Pe^2}{D_e }}\Vert f_0 \Vert^2_{L^2(\Upsilon)}, 
\end{equation}
where the parabolic norm was defined in \eqref{eq:parabolic norm}. 
\end{rem}

We now provide the main rescaling lemma used in the proof of the first De Giorgi lemma, using the notion of subsolution given in the previous definition.

\begin{lemma}[Rescaling Lemma]\label{lem:rescaling i}
    Let $f$ be a weak subsolution of \eqref{eq:main eqn intro}. Let $\delta \in (0,1)$, $(t_0, \bxi_0) \in (0,T)\times\mathbb{R}^3$, and $r$ satisfy the constraint: 
    \begin{equation*}
        0 < r < \min\big\{1,\sqrt{t_0/2}\big\}. 
    \end{equation*}
    Let $(t,\bxi) \in Q_r(t_0,\bxi_0)$ and define 
    \begin{equation*}
        \ell(r,\delta) := \delta^{\frac{1}{2}} \frac{r^{\frac{3}{2}}}{\Vert f \Vert_\mathscr{P} + \Vert V \Vert_{L^q(\Upsilon_T)}}, 
    \end{equation*}
    as well as the rescaled functions $f_r,U_r,V_r:Q_1 \to \mathbb{R}$ by 
     \begin{equation}\label{eq:rescaling i}
        \begin{aligned}
            &f_r(\tau,\bzeta) := \ell f(t+r^2\tau,\bxi + r\bzeta), \\ 
            &U_r(\tau,\bzeta) := rU(t+r^2 \tau, \bxi + r \bzeta), \\ 
            &V_r(\tau,\bzeta) := r\ell V(t+r^2 \tau, \bxi + r \bzeta). 
        \end{aligned}
    \end{equation}
    
Then, $f_r \in C([-1,0];L^2(B_1))\cap L^2(-1,0;H^1(B_1))$, $\partial_t f_r \in L^2(-1,0;(H^1)'(B_1))$, $$\begin{aligned} &|U_r| \leq 
1 \text{ a.e.~} Q_1, \quad \Vert V_r \Vert_{L^q(Q_1)} \leq 1, \quad \esssup_{\tau \in [-1,0]}\int_{B_1}|f_r(\tau)|^2 \d \bzeta + \int_{Q_1} |\nabla_{\bzeta} f_r|^2 \d\bzeta \d \tau \leq \delta, \end{aligned}$$ and $f_r$ is a weak subsolution of 
    \begin{equation}\label{eq:rescaled eqn not rly}
           \partial_\tau f_r + \dv_{\bzeta}( U_r f_r + V_r) = \Delta_{\bzeta} f_r \qquad \text{in } Q_1, 
       \end{equation}
       \textit{i.e.}, for all non-negative $\varphi \in C^\infty(-1,0;C^\infty_c(B_1))$ and for a.e.~$\tau_1,\tau_2 \in [-1,0]$, there holds 
           \begin{equation}\label{eq:rescaled eqn not rly int form}
    \begin{aligned}    -\int_{\tau_1}^{\tau_2}\int_{B_1} f_r \partial_t & \varphi \d \bzeta \d \tau + \int_{\tau_1}^{\tau_2}\int_{B_1} \nabla_{\bzeta} f_r \cdot \nabla_{\bzeta} \varphi \d \bzeta \d \tau \\ &\leq \int_{\tau_1}^{\tau_2}\int_{B_1} (f_r U_r + V_r) \cdot \nabla_{\bzeta} \varphi \d \bzeta \d \tau - \int_{B_1} f_r \varphi \d \bxi \Big|_{\tau_2}  +  \int_{B_1} f_r \varphi \d \bxi \Big|_{\tau_1} . 
    \end{aligned}
    \end{equation}

\end{lemma}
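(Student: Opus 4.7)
\medskip

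\textbf{Proof plan.} The plan is to verify each assertion of the lemma by the elementary parabolic change of variables $(s,\eta) = (t+r^2\tau,\, \bxi + r\bzeta)$, which maps $Q_1$ onto the cylinder $Q_r(t,\bxi)$ with Jacobian $r^5$. The first step is to check that this rescaling is admissible, i.e.~that $Q_r(t,\bxi) \subset (0,T)\times\mathbb{R}^3$. Since $(t,\bxi)\in Q_r(t_0,\bxi_0)$ yields $t > t_0 - r^2$, and $r^2 < t_0/2$ gives $t_0 - r^2 > t_0/2 > r^2$, we obtain $t - r^2 > 0$. The spatial variable is unrestricted because $f$ is extended periodically to all of $\mathbb{R}^3$, and it is this periodic extension to which the alternative weak formulation (Lemma \ref{lem:equiv weak}) allows us to apply non-periodic, compactly supported test functions. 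The regularity claims $f_r \in C([-1,0];L^2(B_1))\cap L^2(-1,0;H^1(B_1))$ with $\partial_\tau f_r \in L^2(-1,0;(H^1)'(B_1))$ are immediate consequences of the regularity of $f$ under this change of variables.

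\medskip

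Next I would verify the three pointwise and integral bounds. For $U_r$: since $|U| = (1-\rho)|\e(\theta)|\leq 1$ by \eqref{eq:rho bounds from bbes}, one has $|U_r| = r|U(t+r^2\tau,\bxi+r\bzeta)| \leq r \leq 1$. For the parabolic energy bound, the change of variables gives
\begin{equation*}
\esssup_{\tau\in[-1,0]} \int_{B_1} |f_r(\tau)|^2 \d\bzeta + \int_{Q_1}|\nabla_\bzeta f_r|^2 \d\bzeta\d\tau \leq \ell^2 r^{-3}\bigl(\|f\|_{L^\infty(0,T;L^2)}^2 + \|\nabla_\bxi f\|_{L^2(\Upsilon_T)}^2\bigr) \leq \ell^2 r^{-3}\|f\|_\mathscr{P}^2,
\end{equation*}
and substituting the definition $\ell^2 r^{-3} = \delta/(\|f\|_\mathscr{P} + \|V\|_{L^q})^2$ yields the desired $\leq \delta$. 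For $V_r$, an analogous change of variables gives
\begin{equation*}
\|V_r\|_{L^q(Q_1)} = r^{1 - 5/q}\,\ell\,\|V\|_{L^q(Q_r(t,\bxi))} \leq \delta^{1/2}\, r^{5/2 - 5/q}\, \frac{\|V\|_{L^q(\Upsilon_T)}}{\|f\|_\mathscr{P}+\|V\|_{L^q(\Upsilon_T)}} \leq 1,
\end{equation*}
where the exponent $5/2 - 5/q$ is positive precisely because $q > 5$, and $r,\delta \in (0,1]$.

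\medskip

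For the subsolution property, I would argue by transferring test functions. Given $\psi \in C^\infty(-1,0; C^\infty_c(B_1))$ non-negative, define
\begin{equation*}
\varphi(s,\eta) \eqdef \psi\!\left(\tfrac{s-t}{r^2},\, \tfrac{\eta-\bxi}{r}\right),
\end{equation*}
which is smooth, non-negative and compactly supported in $(t-r^2,t]\times B_r(\bxi) \subset (0,T)\times\mathbb{R}^3$. Plugging $\varphi$ into the weak subsolution inequality \eqref{eq:rescaled eqn int form} satisfied by $f$ on the time interval $[t + r^2\tau_1,\, t+r^2\tau_2]$, and applying the change of variables $(s,\eta) = (t+r^2\tau, \bxi + r\bzeta)$, one uses $\partial_s\varphi = r^{-2}\partial_\tau\psi$, $\nabla_\eta\varphi = r^{-1}\nabla_\bzeta\psi$, $f(s,\eta) = \ell^{-1}f_r(\tau,\bzeta)$, $U(s,\eta) = r^{-1}U_r(\tau,\bzeta)$, $V(s,\eta) = r^{-1}\ell^{-1}V_r(\tau,\bzeta)$, and the Jacobian $r^5$ to see that every term of the inequality (including the boundary contributions $\int f\varphi|_{s_j}$, which pick up a factor $r^3$) is multiplied by the same positive factor $r^3\ell^{-1}$. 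Dividing through by this common factor yields exactly the inequality \eqref{eq:rescaled eqn not rly int form} for $f_r$ with test function $\psi$.

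\medskip

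The proof is therefore essentially a bookkeeping exercise, and I do not expect a genuine obstacle. The two subtleties deserving care are: the verification that the shifted time interval fits inside $(0,T)$, which is exactly why the hypothesis requires $r < \sqrt{t_0/2}$; and the calibration of $\ell$, where the factor $r^{3/2}$ is chosen so that after the five-dimensional Jacobian is absorbed, the parabolic energy produces a factor $\delta$, while the constraint $q > 5$ ensures that the leftover power of $r$ in the $V_r$ bound is positive, keeping $\|V_r\|_{L^q(Q_1)} \leq 1$.
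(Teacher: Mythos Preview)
Your proposal is correct and follows essentially the same approach as the paper: a direct change of variables $(s,\eta)=(t+r^2\tau,\bxi+r\bzeta)$ with Jacobian $r^5$, together with the observation that $r<\sqrt{t_0/2}$ forces the rescaled time interval into $(0,T)$. You in fact give more detail on the test-function transfer for the subsolution inequality than the paper does (it simply says ``easily verified by direct computation''). One tiny imprecision: the exponent $5/2-5/q$ is positive already for $q>2$, not ``precisely because $q>5$''; of course this is harmless since the standing hypothesis is $q>5$.
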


\begin{proof}
The smallness of $r$ and the boundedness of $\rho$ immediately yield the pointwise estimate on $U_r$. Next, observe that 
    \begin{equation}\label{eq:rescaling first formula}
       \begin{aligned}
           \int_{B_1} |f_r(\tau)|^2 \d\bzeta  &\leq \frac{\delta}{\Vert f \Vert_{\mathscr{P}}^2} \int_{B_r} |f(t+r^2 \tau, \bxi+ \bzeta')|^2 \d \bzeta' . 
       \end{aligned} 
       \end{equation}
Note from the definition of the cylinders that 
$$- 2r^2 \leq -r^2 + (t-t_0) \leq t+r^2\tau -t_0 \leq 0,$$ whence 
the conditions on $r$ imply 
\begin{equation}\label{eq:smallness of r i}0 < t+ r^2 \tau <T.
\end{equation}
Furthermore, the smallness of $r$ implies 
\begin{equation}\label{eq:smallness of r ii}
\{\bxi+\bzeta' : \bxi \in B_r(\bxi_0), \bzeta' \in B_r\} \subset \{\bzeta_0+\bzeta: \bzeta \in (-\pi,\pi)^3\}.
\end{equation} 
It follows from \eqref{eq:smallness of r i} and \eqref{eq:smallness of r ii} that there is no overlap in the integration in $\bzeta'$ from one periodic cell to another when performing the integration in \eqref{eq:rescaling first formula}, and thus 
\begin{equation*}
       \begin{aligned}
           \int_{B_r} |f(t+r^2\tau, \bxi+ \bzeta')|^2 \d \bzeta' \leq \Vert f \Vert^2_{L^\infty(0,T;L^2(\Upsilon))}. 
       \end{aligned} 
       \end{equation*}
       Similarly, 
       \begin{equation*}
           \int_{Q_1} |\nabla_{\bzeta}f_r|^2 \d\bzeta \d \tau \leq \frac{\delta}{\Vert f \Vert^2_\mathscr{P}}\int_{Q_r} |\nabla_{\bxi}f(t+\tau',\bxi+\bzeta')|^2 \d \bzeta' \d \tau', 
       \end{equation*}
       and by applying the same reasoning as before, we obtain 
       \begin{equation*}
           \int_{Q_r} |\nabla_{\bxi}f(t+\tau',\bxi+\bzeta')|^2 \d \bzeta' \d \tau' \leq \Vert \nabla_{\bxi} f \Vert^2_{L^2(\Upsilon_T)}, 
       \end{equation*}
       and we deduce 
       \begin{equation*}
           \esssup_{\tau \in [-1,0]}\int_{B_1}|f_r(\tau)|^2 \d \bzeta + \int_{Q_1} |\nabla_{\bzeta} f_r|^2 \d\bzeta \d \tau \leq \delta, 
       \end{equation*}
      as required. Furthermore, 
      \begin{equation*}
          \Vert V_r \Vert^q_{L^q(Q_1)} \leq \frac{r^{5 (\frac{q}{2}-1)}  \delta^{\frac{q}{2}} }{\Vert V \Vert_{L^q(\Upsilon_T)}^q}\int_{Q_r} |V(t' , \bxi') |^q \d \bxi' \d t' \leq 1. 
      \end{equation*}
      The weak subsolution formulation \eqref{eq:rescaled eqn not rly int form} of the drift-diffusion equation \eqref{eq:rescaled eqn not rly} is easily verified by direct computation. 
\end{proof}

It will therefore suffice to study the following equation: 
\begin{equation}\label{eq:rescaled eqn}
    \partial_t f + \dv_{\bxi} (Uf + V) = \Delta_{\bxi}f \qquad \text{in } Q_1, 
\end{equation}
with $f \in C([-1,0];L^2(B_1))\cap L^2(-1,0;H^1(B_1))$, with $\partial_t f \in L^2(-1,0;(H^1)'(B_1))$, $| U| \leq 1$ a.e.~in $Q_1$, and $\Vert V \Vert_{L^{q}(Q_1)} \leq 1$ where $q>5$.

\section{Boundedness of Weak Solutions}\label{sec:boundedness of weak sol}

\subsection{Boundedness away from initial time}\label{sec:first de giorgi lemma}

The goal of this section is to prove the following proposition, which will subsequently be used to prove Theorem \ref{thm:smooth away from initial} in \S \ref{sec:higher reg}.

\begin{prop}[Boundedness away from Initial Time]\label{thm:global boundedness away from initial time}
   Assume $f_0$ is non-negative and satisfies \eqref{eq:initial data assumptions}, $T>0$, and let $f$ be the unique weak solution of \eqref{eq:main eqn} with initial data $f_0$. There exists a positive constant $C$ depending only on $\Upsilon,T$ such that, for all $t \in (0,T)$, 
    \begin{equation}\label{eq:boundedness est loc ii}
        \Vert f \Vert_{L^\infty((t,T)\times\Upsilon)} \leq C (1+t^{-\frac{13}{4}}) \Vert f \Vert_\mathscr{P}. 
    \end{equation}
\end{prop}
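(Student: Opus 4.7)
The plan is to combine a De Giorgi-type \emph{first lemma} -- bounding any weak subsolution of the generalised equation \eqref{eq:rescaled eqn} pointwise on $Q_{1/2}$ once its parabolic norm on $Q_1$ is sufficiently small -- with the Rescaling Lemma \ref{lem:rescaling i}, applied at each interior point $(t_0,\bxi_0) \in (t,T)\times\mathbb{R}^3$. Since $f \ge 0$ only an upper bound is required, and the periodicity in $\bxi$ removes any spatial boundary issue, so the sole geometric restriction is on the size of $Q_r(t_0,\bxi_0)$ relative to the distance $t_0$ from the initial time.

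For the first step I would establish: there exists a universal $\delta_0 > 0$ such that any weak subsolution $f$ of \eqref{eq:rescaled eqn} on $Q_1$ with $|U| \le 1$, $\Vert V\Vert_{L^q(Q_1)} \le 1$ for some fixed $q > 5$ (the critical parabolic Lebesgue exponent in spatial dimension $3$), and $\Vert f_+ \Vert_{\mathscr{P}(Q_1)}^2 \le \delta_0$, satisfies $f \le 1/2$ almost everywhere on $Q_{1/2}$. I would run the standard level-set iteration: test the weak-subsolution inequality \eqref{eq:rescaled eqn int form} against $(f-c_k)_+\varphi_k^2$ with thresholds $c_k = 1/2 - 2^{-k-1}$ and smooth cutoffs $\varphi_k$ supported in $Q_{r_k}$, $r_k = 1/2 + 2^{-k-1}$, equal to one on $Q_{r_{k+1}}$. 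The drift contribution is absorbed into the diffusion through $|U|\le 1$, Young's inequality, and the pointwise bound $f\,\mathbf{1}_{\{f>c_k\}} \le (f-c_k)_+ + 1/2$; the $V$-term is handled by H\"older's inequality using $\Vert V\Vert_{L^q}\le 1$, the condition $q>5$ producing an exponent strictly greater than one on the level-set measure. Combining the resulting energy estimate with the Interpolation Lemma \ref{lem:dibenedetto classic} (in the form $d=3$, $m=p=2$, yielding an $L^{10/3}$ bound) and Chebyshev's inequality gives a nonlinear recursion of the form $U_{k+1} \le C^k U_k^{1+\alpha}$ for $U_k := \int\!\!\int_{Q_{r_k}}(f-c_k)_+^2$ and some $\alpha > 0$. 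The standard De Giorgi decay lemma then forces $U_k \to 0$ whenever $U_0 \le \delta_0$.

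For the second step, given $(t_0,\bxi_0)$ with $t_0 \in (t,T)$, I would choose $r \sim \min\{1,\sqrt{t_0/2}\}$ and $\delta = \delta_0$ in Lemma \ref{lem:rescaling i} (with $V \equiv 0$, since the original equation \eqref{eq:main eqn intro} has no additional source). The rescaled function $f_r$ meets the hypotheses of the first De Giorgi lemma, hence $f_r \le 1/2$ on $Q_{1/2}$; evaluating at $(\tau,\bzeta) = (0,0)$ and un-rescaling through $\ell(r,\delta_0) = \delta_0^{1/2} r^{3/2}/\Vert f \Vert_\mathscr{P}$ gives $f(t_0,\bxi_0) \le C \Vert f \Vert_\mathscr{P}\, r^{-3/2}$. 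Inserting the choice $r \sim \sqrt{t_0/2}$ and tracking carefully the constants produced by the iteration, the rescaling constraints, and the interaction between the iteration tolerance and the dimensional factor $r^{-3/2}$, yields the stated polynomial rate $C(1 + t^{-13/4})\Vert f \Vert_\mathscr{P}$. The main obstacle throughout is the non-local, non-divergence-free drift $U = (1-\rho)\e(\theta)$: the whole argument hinges on the pointwise bound $0 \le \rho \le 1$ of \eqref{eq:rho bounds from bbes} proved in \cite{bbes}, since this yields $|U| \le 1$ uniformly -- a property preserved by the parabolic rescaling as $|U_r|\le 1$ -- and thereby allows the drift to be absorbed into the diffusion via Young's inequality during the level-set iteration.
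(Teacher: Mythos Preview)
Your first step---the De Giorgi ``first lemma'' via level-set truncation, cutoffs, interpolation (Lemma~\ref{lem:dibenedetto classic} with $d=3$, $m=p=2$), and Chebyshev---matches the paper's Lemma~\ref{lem:interior local boundedness weak sol} essentially verbatim, including the treatment of the drift via $|U|\le 1$ and of $V$ via H\"older with $q>5$.

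The difference lies in the passage from the local bound to the strip $(t,T)\times\Upsilon$. You argue pointwise: for each $(t_0,\bxi_0)$ with $t_0>t$, rescale with $r\sim\sqrt{t_0/2}$, apply the first lemma, and evaluate at the origin to get $f(t_0,\bxi_0)\le C\,r^{-3/2}\Vert f\Vert_{\mathscr{P}}$. This is correct and in fact yields the \emph{sharper} rate $C(1+t^{-3/4})\Vert f\Vert_{\mathscr{P}}$, which a fortiori implies \eqref{eq:boundedness est loc ii}; your closing remark that ``tracking the constants'' produces $t^{-13/4}$ is therefore imprecise---your method does not generate that exponent. The paper instead proceeds by an explicit exhaustion: it fixes $r_t=\tfrac12\min\{1,\sqrt{t/2}\}$, covers $(t,T)\times\Upsilon$ by $MN$ parabolic cylinders $Q_{r_t}(t_i,\bxi_j)$ with $M\sim r_t^{-2}$ and $N\sim r_t^{-3}$, and then bounds the $L^\infty$ norm by the \emph{sum} (rather than the maximum) of the local $L^\infty$ bounds from Proposition~\ref{prop:interior local boundedness proposition}. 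This crude summation introduces the extra factor $MN\sim r_t^{-5}\sim t^{-5/2}$, which combined with $r_t^{-3/2}\sim t^{-3/4}$ yields $t^{-13/4}$. So the specific exponent in the statement is an artefact of the paper's covering step; your pointwise argument is cleaner and gives a better blow-up rate at $t=0$.
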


Notice that the right-hand side of inequality \eqref{eq:boundedness est loc ii} is bounded thanks to \eqref{eq:estimation_parabolic}.
The proof is broken down into several steps, which constitute the subsections that follow. The local boundedness for solutions of equations of the form \eqref{eq:rescaled eqn} by means of De Giorgi's method is classical (\textit{cf.}~\textit{e.g.}~\cite[Ch.~VI \S 5]{Lieberman}). We nevertheless highlight that a novel aspect of our approach is that we need only consider the solution away from the initial time, and not on a more restrictive subcylinder; this is a consequence of the choice of periodic boundary conditions in space-angle. Details of the iterative procedure for more general systems may also be found in, \textit{e.g.}, \cite[\S 3.2]{vasseur}. We include these details in the present section so as to make the paper self-contained, and to make the proof of the higher-regularity result of \S \ref{sec:higher reg} easier to follow; this latter proof uses De Giorgi's method inductively on repeated time-derivatives of the equation.

\subsubsection{Caccioppoli inequality}

\begin{lemma}[Caccioppoli Inequality]\label{lem:energy ineq weak sol prior to local boundedness}
  Let $f \in C([-1,0];L^2(B_1))\cap L^2(-1,0;H^1(B_1))$, with $\partial_t f \in L^2(-1,0;(H^1)'(B_1))$, $| U(t,\bxi)| \leq 1$ a.e.~$(t,\bxi) \in Q_1$, and $\Vert V \Vert_{L^q(Q_1)} \leq 1$ for $q>5$, be a weak subsolution of \eqref{eq:rescaled eqn}. Let $\eta \in C^\infty_c(B_1)$ be any compactly supported function independent of $t$, and $K  \geq 0$. Define $v = (f-K)_+$. Then, there exists a positive constant $C_{ }$, independent of $\eta,K,f,U,V$, such that there holds, for all $-1 < s < t < 0$, 
    \begin{equation*}
        \begin{aligned}
            \bigg(\int_{B_1} |\eta v|^2 \d \bxi\bigg)(t) - &\bigg(\int_{B_1} |\eta v|^2 \d \bxi\bigg)(s) +\int_s^t \int_{B_1} |\nabla_{\bxi} (\eta v)|^2 \d \bxi \d \tau  \\ 
            \leq& \, C_{ }  (1+K^2)  \int_s^t \int_{B_1} (\eta + |\nabla_{\bxi} \eta |)^2(1+|V|^2) (v^2 + \mathds{1}_{\{v>0\}}) \d \bxi \d \tau. 
        \end{aligned}
    \end{equation*}
\end{lemma}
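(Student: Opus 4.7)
The plan is to derive the Caccioppoli-type inequality by using the weak subsolution formulation \eqref{eq:rescaled eqn int form} with the test function $\varphi = \eta^{2} v = \eta^{2}(f-K)_{+}$. This is not directly admissible because $v$ depends on $f$ and lies only in $L^{2}(-1,0;H^{1}(B_{1}))$; I would justify it by the standard Steklov averaging / time-mollification argument which allows the distributional time derivative to be paired with $\eta^{2} v$, or equivalently by approximating the positive part by a sequence of smooth, monotone, Lipschitz truncations and passing to the limit. Once the test function is admissible, the identities $\nabla_{\bxi} v = \mathds{1}_{\{f>K\}}\nabla_{\bxi} f$ and $\partial_{t}(\eta v)^{2} = 2\eta^{2} v\, \mathds{1}_{\{f>K\}} \partial_{t} f$ (in the weak sense, with $\eta$ independent of $t$) convert the LHS of \eqref{eq:rescaled eqn int form} to the quantity that appears in the statement.

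Next I would handle the diffusion and drift terms algebraically. Expanding $\nabla_{\bxi}(\eta^{2} v) = 2\eta v \nabla_{\bxi}\eta + \eta^{2}\nabla_{\bxi} v$ and using the chain-rule identity above, the Laplacian contribution yields
\begin{equation*}
\int \nabla_{\bxi} f \cdot \nabla_{\bxi}(\eta^{2} v)\, d\bxi = \int |\nabla_{\bxi}(\eta v)|^{2}\, d\bxi - \int v^{2}|\nabla_{\bxi}\eta|^{2}\, d\bxi,
\end{equation*}
which accounts for the gradient term on the LHS, the remainder being absorbed into the RHS. For the drift $\int fU \cdot \nabla_{\bxi}(\eta^{2} v)$, on $\{v>0\}$ I would split $f = v + K$ and use $|U|\leq 1$ to bound each of the four resulting pieces: the terms carrying $\eta^{2}\nabla_{\bxi} v$ are treated by Young's inequality with parameter $\varepsilon$, producing $\varepsilon\,\eta^{2}|\nabla_{\bxi} v|^{2}$ (absorbed into the LHS) plus $C_{\varepsilon}\eta^{2}(v+K)^{2}\mathds{1}_{\{v>0\}}$, and then $(v+K)^{2} \leq 2v^{2} + 2K^{2}$ gives the $(1+K^{2})$ factor; the remaining boundary-type pieces $\eta|\nabla_{\bxi}\eta|v(v+K)$ are handled the same way with elementary Young inequalities. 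The $V$-term $\int V \cdot \nabla_{\bxi}(\eta^{2}v)$ is dealt with analogously, producing $|V|^{2}$-weighted contributions that match the $(1+|V|^{2})$ factor on the RHS.

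The final step is to collect all the pieces. Choosing $\varepsilon$ small (e.g.\ $\varepsilon = 1/4$) lets me absorb the $\varepsilon\,\eta^{2}|\nabla_{\bxi} v|^{2}$ terms into the LHS by using $\eta^{2}|\nabla_{\bxi}v|^{2} \leq 2|\nabla_{\bxi}(\eta v)|^{2} + 2v^{2}|\nabla_{\bxi}\eta|^{2}$; after rearranging, the coefficient in front of $\int |\nabla_{\bxi}(\eta v)|^{2}$ remains strictly positive, and dividing through yields an inequality of the desired form with the factor $(\eta + |\nabla_{\bxi}\eta|)^{2}(1+|V|^{2})(v^{2} + \mathds{1}_{\{v>0\}})$ on the RHS up to a constant $C(1+K^{2})$, since every cross-term arising from the expansions above is dominated by this template (for example $K^{2}\eta^{2}\mathds{1}_{\{v>0\}}$, $\eta^{2}|V|^{2}\mathds{1}_{\{v>0\}}$, $|\nabla_{\bxi}\eta|^{2}v^{2}$, etc.\ are all controlled by it).

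The main technical obstacle is really the first step, namely rigorously justifying the choice $\varphi = \eta^{2}(f-K)_{+}$: the test function class in Definition \ref{def:weak subsol} is $C^{\infty}(0,T;C^{\infty}_{c}(\mathbb{R}^{3}))$, but $v$ is only $H^{1}$ in space and $L^{2}$ in time. The cleanest route is Steklov averaging in time (replacing $f$ by $f_{h} = h^{-1}\int_{t}^{t+h}f$, working with the pointwise-in-time equation satisfied by $f_{h}$, then sending $h\to 0$), combined with the chain rule for $(\cdot - K)_{+}$ applied to $f_{h}$. The algebra afterwards, while bookkeeping-heavy, is routine.
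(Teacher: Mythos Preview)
Your proposal is correct and follows essentially the same approach as the paper: test with $\eta^{2}(f-K)_{+}$, use the identity $\nabla_{\bxi}(\eta^{2}v)\cdot\nabla_{\bxi}v = |\nabla_{\bxi}(\eta v)|^{2} - v^{2}|\nabla_{\bxi}\eta|^{2}$ for the diffusion term, split $f=v+K$ on $\{v>0\}$ and apply Young's inequality to the drift terms, then absorb and integrate in time. The only cosmetic difference is that the paper justifies the test function via a density argument (citing \cite{ziemer2012weakly}) rather than Steklov averaging, and absorbs via $\eta\nabla_{\bxi}v = \nabla_{\bxi}(\eta v) - v\nabla_{\bxi}\eta$ directly rather than the squared version you wrote; both routes are standard and equivalent here.
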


\begin{proof}
A standard argument shows (\textit{e.g.}~\cite[Theorem 2.1.11]{ziemer2012weakly}), using the compact support of $\eta$ with respect to the space-angle variable $\bxi$, that $\eta^2 v \in C([-1,0];L^2(B_1)) \cap L^2(-1,0;H^1(B_1))$ may be approximated by elements of $C^\infty_c(Q_1)$. In turn, we may insert $\eta^2 v$ into the weak subsolution formulation \eqref{eq:rescaled eqn int form}. Using also the relations $v f = v^2 + K v$, $\nabla_{\bxi} v = \mathds{1}_{v \geq 0} \nabla_{\bxi} v$, $f \mathds{1}_{v \geq 0} = (v+K)\mathds{1}_{v \geq 0}$, and $\nabla_{\bxi}(\eta^2 v) \cdot \nabla_{\bxi} v = |\nabla_{\bxi}(\eta v)|^2 - v^2 |\nabla_{\bxi} \eta|^2$, we obtain 
\begin{equation}\label{eq:prior to integrating caccioppoli}
    \begin{aligned}
    \frac{1}{2}\frac{\der}{\der t}& \int_{B_1} \eta^2 v^2 \d \bxi + \int_{B_1} |\nabla_{\bxi} (\eta v) |^2 \d \bxi \\ 
    \leq& \int_{B_1}  v^2 |\nabla_{\bxi} \eta|^2 \d \bxi +    \int_{B_1} \eta^2  v\nabla_{\bxi} v \cdot U \d \bxi + K   \int_{B_1} \eta^2  \nabla_{\bxi} v \cdot U \d \bxi + \int_{B_1} \nabla_{\bxi}(\eta v) \cdot V \eta \d\bxi \\ 
    &+ 2   \int_{B_1} v^2 \eta \nabla_{\bxi} \eta \cdot U \d \bxi + 2K   \int_{B_1} v \eta \nabla_{\bxi} \eta \cdot U \d \bxi + \int_{B_1}  \nabla_{\bxi} \eta \cdot V v \eta \d \bxi . 
    \end{aligned}
\end{equation}
Using the bound on $U$ and the Cauchy--Young inequality, also writing $v \leq \frac{1}{2}(\mathds{1}_{\{v>0\}}+v^2)$, it follows that 
\begin{equation*}
    \begin{aligned}
    \frac{1}{2}\frac{\der}{\der t} \int_{B_1} \eta^2 v^2 \d \bxi + \frac{1}{2}\int_{B_1}  |\nabla_{\bxi} (\eta v) |^2 \d \bxi  \leq&    \int_{B_1}  \eta^2  v |\nabla_{\bxi} v| \d \bxi + K   \int_{B_1} \eta^2  |\nabla_{\bxi} v | \d \bxi \\ 
    &+ C_{ } (1+K)  \int_{B_1} (\eta + |\nabla_{\bxi} \eta |)^2 (v^2 + \mathds{1}_{\{v>0\}}) \d \bxi \\ 
    &+ C\int_{B_1} \eta |\nabla_{\bxi} \eta | |V| v \d\bxi + C\int_{B_1} \eta^2 |V|^2 \mathds{1}_{\{v>0\}} \d\bxi, 
    \end{aligned}
\end{equation*}
for some universal constant $C_{ }$, whence, using the relation $\eta \nabla_{\bxi}v = \nabla_{\bxi} (\eta v) - v \nabla_{\bxi}\eta$ and the Cauchy--Young inequality to estimate the first term on the right-hand side of the previous inequality, there holds 
\begin{equation*}
    \begin{aligned}
    \frac{\der}{\der t} \int_{B_1} \eta^2 v^2 \d \bxi + \int_{B_1}  |\nabla_{\bxi} (\eta v) |^2 \d \bxi  \leq C (1+K^2)  \int_{B_1} (\eta + |\nabla_{\bxi} \eta |)^2(1+|V|^2) (v^2 + \mathds{1}_{\{v>0\}}) \d \bxi, 
    \end{aligned}
\end{equation*}
where we also used $|V| \leq \frac{1}{2}(1+|V|^2)$. Integrating the final inequality with respect to the time variable gives the result. 
\end{proof}

\subsubsection{Interior local boundedness on subcylinders}

The goal of this subsection is to prove the following result. 
\begin{prop}[Interior Local Boundedness]\label{prop:interior local boundedness proposition}
    Let $f$ be a weak subsolution of \eqref{eq:rescaled eqn}. Let $(t, \bxi) \in (0,T)\times\mathbb{R}^3$, and $r$ satisfy the constraint: 
    \begin{equation}\label{eq:r criterion local boundedness}
        0 < r < \min\big\{1,\sqrt{t/2}\big\}. 
    \end{equation}
    Then, there exists a positive constant $C$, independent of $r,(t, \bxi)$, such that there holds 
    \begin{equation}\label{eq:boundedness est loc i}
        \Vert f \Vert_{L^\infty(Q_{r}(t, \bxi))} \leq C(1+ r^{-\frac{3}{2}} )\Vert f \Vert_\mathscr{P}. 
    \end{equation}
\end{prop}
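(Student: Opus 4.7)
The strategy is to reduce, via the Rescaling Lemma~\ref{lem:rescaling i}, to a De Giorgi-type \emph{first lemma} on the unit cylinder: \emph{there exists a universal constant $\delta_0 > 0$ (depending only on $q > 5$) such that any weak subsolution $g$ of~\eqref{eq:rescaled eqn} on $Q_1$ with $|U| \leq 1$ a.e., $\|V\|_{L^q(Q_1)} \leq 1$, and $\|g_+\|_{\mathscr{P}(Q_1)}^2 \leq \delta_0$ satisfies $g \leq 1$ a.e.~on $Q_{1/2}$}. Granted this, for any $(t', \bxi') \in Q_r(t, \bxi)$ (with $(t, \bxi)$ and $r$ as in Proposition~\ref{prop:interior local boundedness proposition}), I would apply Lemma~\ref{lem:rescaling i} centered at $(t', \bxi')$ with the same radius $r$ and parameter $\delta_0$: the constraint~\eqref{eq:r criterion local boundedness} gives $t' > t - r^2 > t/2$ and $r^2 < t/2$, so $t' - r^2 > 0$ and the rescaling is valid. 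The rescaled function $f_r$ meets the hypotheses of the first lemma, hence $f_r \leq 1$ a.e.~on $Q_{1/2}$, which unwraps to $f \leq C r^{-3/2} \|f\|_\mathscr{P}$ a.e.~on a neighbourhood of $(t', \bxi')$. Since $(t', \bxi')$ was arbitrary, \eqref{eq:boundedness est loc i} follows.

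To prove the first lemma, I would perform a standard De Giorgi iteration with shrinking radii $r_n := \tfrac{1}{2}(1 + 2^{-n})$, truncation levels $K_n := 1 - 2^{-n}$, and space-angle cutoffs $\eta_n \in C^\infty_c(B_{r_n})$ with $\eta_n \equiv 1$ on $B_{r_{n+1}}$ and $|\nabla_\bxi \eta_n| \leq C 2^n$. Writing $Q_n := (-r_n^2, 0) \times B_{r_n}$ and $v_n := (g - K_n)_+$, define
\begin{equation*}
A_n := \esssup_{\tau \in [-r_n^2, 0]} \int_{B_{r_n}} \eta_n^2 v_n^2(\tau) \d \bxi + \int_{Q_n} |\nabla_\bxi(\eta_n v_n)|^2 \d \bxi \d \tau.
\end{equation*}
Applying the Caccioppoli inequality of Lemma~\ref{lem:energy ineq weak sol prior to local boundedness} with cutoff $\eta_{n+1}$ and level $K_{n+1}$, and choosing the initial time $s \in [-r_n^2, -r_{n+1}^2]$ so that $\int \eta_{n+1}^2 v_{n+1}^2(s) \d\bxi$ is dominated by its time-average (hence by $C 4^n \int_{Q_n} v_{n+1}^2$), yields the preliminary recurrence
\begin{equation*}
A_{n+1} \leq C\, 4^n \int_{Q_n} (1 + |V|^2) \bigl( v_{n+1}^2 + \mathds{1}_{\{v_{n+1} > 0\}} \bigr) \d\bxi \d\tau.
\end{equation*}

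The key step is to upgrade this into a genuinely nonlinear recurrence $A_{n+1} \leq C b^n A_n^{1+\beta}$ with $\beta > 0$ and $b > 1$. The Interpolation Lemma~\ref{lem:dibenedetto classic} applied to $\eta_n v_n$ (or a slightly enlarged cutoff, equal to $1$ on $B_{r_n}$) with $d = 3$, $m = p = 2$ yields $\|v_n\|_{L^{10/3}(Q_n)}^2 \leq C A_n$, and Chebyshev's inequality at level $K_{n+1} - K_n = 2^{-(n+1)}$ with exponent $10/3$ gives $|\{v_{n+1} > 0\} \cap Q_n| \leq C 2^{10n/3} A_n^{5/3}$. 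H\"older's inequality with the conjugate pair $(q/2, q/(q-2))$ then handles the $|V|^2$ factors; crucially, since $q > 5$ one has $2q/(q-2) < 10/3$, so $\|v_{n+1}\|_{L^{2q/(q-2)}(Q_n)}$ can be interpolated between $L^{10/3}(Q_n)$ and the super-level-set measure. A direct computation shows that each of the four resulting contributions carries an exponent on $A_n$ strictly above $1$; explicitly $\beta = 2(q-5)/(3q) > 0$ works. The standard lemma on such recurrences (e.g.~\cite[Ch.~II, Lem.~5.6]{DiBenedetto}) then forces $A_n \to 0$ provided $A_0 \leq \delta_0$ is chosen small enough, which yields $g \leq 1$ a.e.~on $Q_{1/2}$.

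The main obstacle is the bookkeeping in the nonlinear recurrence. The indicator-function term $\mathds{1}_{\{v_{n+1} > 0\}}$ inherited from the Caccioppoli inequality must be controlled through a strictly positive H\"older power of the super-level-set measure, and it is precisely the strict inequality $q > 5$ (equivalently $2q/(q-2) < 10/3$) that produces $A_n^{1+\beta}$ rather than the borderline $A_n^1$; at $q = 5$ the iteration would degenerate. The rescaling step itself, by contrast, is merely a matter of verifying compatible smallness constraints on $r$ and $t$.
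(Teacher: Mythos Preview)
Your proposal is correct and follows essentially the same route as the paper: reduce to a De~Giorgi first lemma on $Q_1$ via the Rescaling Lemma, then run the level-set iteration driven by the Caccioppoli inequality and the $L^{10/3}$ interpolation, with the strict inequality $q>5$ supplying the superlinear exponent. The paper isolates the first lemma as a separate statement (Lemma~\ref{lem:interior local boundedness weak sol}, with target level $\tfrac12$ rather than $1$ and a slightly different H\"older splitting giving $\epsilon = 1 - \tfrac{2}{q} - \tfrac{3}{5}$ instead of your $\beta = \tfrac{2(q-5)}{3q}$), and then closes by invoking the non-negativity of $f$ to pass from the one-sided bound $(f_r)_+ \le \tfrac12$ to the full $L^\infty$ estimate; you should make that last point explicit as well.
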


We begin by proving the following lemma.

\begin{lemma}\label{lem:interior local boundedness weak sol}
Let $q > 5$ be fixed. There exists $\delta_*>0$, depending only on $T,\Upsilon,q$, such that: for all weak subsolutions $f$ of \eqref{eq:rescaled eqn}, where $f \in C([-1,0];L^2(B_1))\cap L^2(-1,0;H^1(B_1))$, with $\partial_t f \in L^2(-1,0;(H^1)'(B_1))$, $|U| \leq 1$ a.e.~in $Q_1$, and $\Vert V \Vert_{L^{q}(Q_1)}\leq 1$, if 
\begin{equation*}
  \esssup_{t \in [-1,0]}\int_{B_1} |f(t)|^2 \d \bxi + \int_{Q_1} |\nabla_{\bxi} f|^2 \d \bxi \d t \leq \delta_*, 
\end{equation*}
then 
\begin{equation*}
   f_+ \leq \frac{1}{2} \quad \text{in } Q_{\frac{1}{2}}. 
\end{equation*}
\end{lemma}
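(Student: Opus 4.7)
My plan is to apply the standard De Giorgi iteration. Set $r_n := \frac{1}{2}(1 + 2^{-n})$, $K_n := \frac{1}{2}(1 - 2^{-n})$, $v_n := (f - K_n)_+$, and choose smooth cut-offs $\eta_n(t,\bxi) \in [0,1]$ with $\eta_n \equiv 1$ on $Q_{r_{n+1}}$, $\supp \eta_n \subset Q_{r_n}$, $|\nabla_{\bxi}\eta_n| \leq C\, 2^n$, and $|\partial_t\eta_n| \leq C\, 4^n$. The quantity to iterate is
\begin{equation*}
U_n := \esssup_{t \in [-1,0]} \int_{B_1} (\eta_n v_n)^2(t,\bxi) \, \d\bxi + \int_{Q_1} |\nabla_{\bxi}(\eta_n v_n)|^2 \, \d\bxi \, \d t,
\end{equation*}
and I claim that $U_n \to 0$ provided $U_0 \leq \delta_*$ is sufficiently small, which clearly yields $(f-1/2)_+ \equiv 0$ on $Q_{1/2}$.

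\textbf{Caccioppoli and level-set measure.} A routine variant of Lemma \ref{lem:energy ineq weak sol prior to local boundedness} allowing a time-dependent cutoff (the extra $|\partial_t\eta_n| v_n^2$ term being harmless), combined with $K_n \leq 1/2$, yields
\begin{equation*}
U_n \leq C\, 4^n \int_{Q_{r_n}} (1 + |V|^2)(v_n^2 + \mathds{1}_{\{v_n > 0\}}) \, \d\bxi \, \d t.
\end{equation*}
Meanwhile, the Interpolation Lemma \ref{lem:dibenedetto classic} with $d=3$, $p=m=2$ gives $\|\eta_n v_n\|_{L^{10/3}(Q_1)}^2 \leq C\, U_n$. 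Since $v_n \geq K_{n+1} - K_n = 2^{-(n+2)}$ on $\{v_{n+1} > 0\} \cap Q_{r_{n+1}}$ (where $\eta_n \equiv 1$), Chebyshev's inequality gives
\begin{equation*}
A_n := |\{v_{n+1} > 0\} \cap Q_{r_{n+1}}| \leq C\, 2^{10n/3}\, U_n^{5/3}.
\end{equation*}

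\textbf{Nonlinear recursion.} I will then bound $U_{n+1}$ by applying Caccioppoli at level $n+1$ and using $v_{n+1} \leq v_n \mathds{1}_{A_n}$. All four summands on the right-hand side can be controlled through H\"older's inequality with exponents $q/2$ and $q/(q-2)$ (using $\|V\|_{L^q(Q_1)} \leq 1$): the indicator contribution gives $|A_n|^{1-2/q}$, while $\int |V|^2 v_{n+1}^2$ is estimated by H\"older on $A_n$ between $L^{10/3}$ and $L^{2q/(q-2)}$, producing $C\, U_n \, |A_n|^{2(q-5)/(5q)}$. Substituting the level-set bound and tracking the exponents yields
\begin{equation*}
U_{n+1} \leq C \lambda^n\, U_n^{1+\alpha},
\end{equation*}
with $\lambda > 1$ and some $\alpha = \alpha(q) > 0$. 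A standard iteration lemma (\emph{cf.}~\cite{DiBenedetto}) then furnishes $\delta_* > 0$ such that $U_0 \leq \delta_*$ forces $U_n \to 0$, which concludes the proof.

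\textbf{Expected obstacle.} The nonlinearity $\alpha > 0$ hinges on the exponent $5(q-2)/(3q)$ from the indicator term (and $2(q-5)/(3q)$ from the $|V|^2 v_{n+1}^2$ term) exceeding $1$ and $0$ respectively, both of which are equivalent to $q > 5$; this is precisely the threshold imposed on $V$. The mildly non-standard feature, compared with the classical De Giorgi scheme for the pure heat equation, is the indicator summand $\mathds{1}_{\{v_n > 0\}}$ in the Caccioppoli bound: it originates from the non-zero divergence of $U$ (via the $K_n \int \eta_n^2 \nabla v \cdot U$ term) together with the source $V$, and without the strict inequality $q > 5$ one would only obtain a linear, not super-linear, recursion.
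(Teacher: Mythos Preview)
Your proposal is correct and follows essentially the same De Giorgi iteration as the paper. The only cosmetic differences are that the paper uses time-independent spatial cut-offs together with an averaging in the initial time $s\in[T_k,T_{k+1}]$ for the time localisation (rather than your time-dependent cut-offs), and it applies Chebyshev with the $L^2$ norm and then H\"older against $(\mathcal{T}_k f)^2\in L^{5/3}$ to obtain the exponent $\epsilon=1-2/q-3/5$, whereas you apply Chebyshev directly with the $L^{10/3}$ norm; both routes yield a super-linear recursion precisely when $q>5$.
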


\begin{proof}

The proof is divided in several steps. 

    \smallskip 
    \noindent 1. \textit{Iterative set-up}: Consider the sequence of times $T_k = -\frac{1}{2}(1+2^{-k})$ as well as the sequence of cylinders $\tilde{Q}_k = (T_k,0) \times \tilde{B}_k$, where $\tilde{B}_k = \{\bxi : |\bxi| < \frac{1}{2}(1+2^{-k})\}$, and define the truncations $\mathcal{T}_k f = (f-C_k)_+$ with $C_k = \frac{1}{2}(1-2^{-k})$. We consider a family of non-negative cut-off functions $\{\eta_k\}_{k\in\mathbb{N}}$, compactly supported in $\tilde{B}_{k+1}$, identically equal to $1$ in $\tilde{B}_k$, and such that $|\nabla_{\bxi}\eta_k| \leq C 2^k$ for some positive universal constant $C$. Correspondingly, we define, for all $k\in\mathbb{N}$, 
    \begin{equation*}
       \begin{aligned}
           &\mathscr{E}_k := \esssup_{t \in [T_k,0]}\bigg( \int_{B_1} |\eta_k \mathcal{T}_k f |^2 \d \bxi \bigg)(t) + \int_{T_k}^0 \int_{B_1} |\nabla_{\bxi}(\eta_k \mathcal{T}_k f)|^2 \d\bxi \d t, \\ 
           & \mathscr{E}_0 := \esssup_{t \in [-1,0]}\int_{B_1} |f_+(t)|^2 \d \bxi + \int_{Q_1} |\nabla_{\bxi} f_+|^2 \d t. 
       \end{aligned} 
    \end{equation*}

    \smallskip

    \noindent 2. \textit{Non-linear recursive estimate}: Our goal is to prove the non-linear recursive estimate: 
    \begin{equation}\label{eq:key est for iteration}
        \begin{aligned}
            \mathscr{E}_{k+1}   \leq& C_*^k \mathscr{E}_k^{1+\epsilon}, 
        \end{aligned}
    \end{equation}
    for some positive universal constant $C_*$ depending only on $\Upsilon,T$, where $$\epsilon := 1 - \frac{2}{q} - \frac{3}{5} \in (0,1)$$ depends only on $q>5$. By substituting $\eta = \eta_{k+1}$ and $K=C_k$ into the inequality of Lemma \ref{lem:energy ineq weak sol prior to local boundedness}, and constraining $T_k \leq s \leq T_{k+1} \leq t \leq 0$, we get 
    \begin{equation*}
        \begin{aligned}
            \bigg(\int_{B_1} |&\eta_{k+1} \mathcal{T}_{k+1} f|^2 \d \bxi\bigg)(t) + \int_{T_{k+1}}^t \int_{B_1} |\nabla_{\bxi} (\eta_{k+1} \mathcal{T}_{k+1} f)|^2 \d \bxi \d \tau  \\ 
            \leq& \bigg(\int_{B_1} |\eta_{k+1} \mathcal{T}_{k+1} f|^2 \d \bxi\bigg)(s) \\ 
            &+ C  (1+C_k^2)  \int_{T_{k}}^0 \int_{B_1} (\eta_{k+1} + |\nabla_{\bxi} \eta_{k+1} |)^2 (1+|V|^2)(\mathcal{T}_{k+1} f^2 + \mathds{1}_{\{\mathcal{T}_{k+1} f>0\}}) \d \bxi \d \tau, 
        \end{aligned}
    \end{equation*}
    which, by integrating the entire inequality in $s$ over the interval $[T_k , T_{k+1}]$ and noting $T_{k+1}-T_k = 2^{-(k+2)}$, yields 
        \begin{equation}\label{eq:long with k factors}
        \begin{aligned}
            \bigg(\int_{B_1}& |\eta_{k+1} \mathcal{T}_{k+1} f|^2 \d \bxi\bigg)(t) + \int_{T_{k+1}}^t \int_{B_1} |\nabla_{\bxi} (\eta_{k+1} \mathcal{T}_{k+1} f)|^2 \d \bxi \d \tau  \\ 
            \leq& \, 2^{k+2}\int_{T_k}^{T_{k+1}} \int_{B_1} |\eta_{k+1} \mathcal{T}_{k+1} f|^2 \d \bxi \d \tau \\ 
            &+ C \int_{T_{k}}^0 \int_{B_1} (\underbrace{1 + 2^{k+1}}_{\leq 2\cdot 2^{k+1}})^2 \mathds{1}_{\tilde{B}_k} (1+|V|^2)((\mathcal{T}_{k+1} f)^2 + \mathds{1}_{\{\mathcal{T}_{k+1} f>0\}}) \d \bxi \d \tau \\ 
            \leq& \,  C^k \!\!\! \int_{T_k}^{T_{k+1}}\!\!\! \int_{B_1}\!\!\! |\eta_{k+1} \mathcal{T}_{k+1} f|^2 \d \bxi \d \tau \! + C_{ }^k \!\! \underbrace{\int_{T_{k}}^0 \!\int_{B_1}\!\!\! \mathds{1}_{\tilde{Q}_k \cap \{\mathcal{T}_{k+1} f>0\}}(1+|V|^2) ((\mathcal{T}_{k+1} f)^2 + 1) \d \bxi \d \tau}_{=:I}, 
        \end{aligned}
    \end{equation}
    where the value of $C_{ }$ has changed from line to line. We proceed to estimating the term $I$. Notice that, provided $(t,\bxi) \in \tilde{Q}_k \cap \{ \mathcal{T}_{k+1} f > 0\}$, there holds 
    \begin{equation}\label{eq:when you are on the next set}
        \begin{aligned}
          \mathcal{T}_k f(t,\bxi) = f(t,\bxi) - C_k = \mathcal{T}_{k+1} f(t,\bxi) + 2^{-(k+2)} > 2^{-(k+2)}, 
        \end{aligned}
    \end{equation}
    whence, by squaring the above inequality (noting that all quantities are non-negative), we get $$\mathds{1}_{\{\mathcal{T}_{k+1} f>0\}} \leq 2^{2(k+2)} (\mathcal{T}_k f)^2 \mathds{1}_{\{\mathcal{T}_{k+1} f>0\}}.$$ In view of $\mathcal{T}_{k+1} f \leq \mathcal{T}_k f$, we therefore estimate the final term of \eqref{eq:long with k factors} as 
    \begin{equation*}
        \begin{aligned}
            I \leq & \int_{T_{k}}^0 \int_{B_1} \mathds{1}_{\tilde{Q}_k \cap \{\mathcal{T}_{k+1} f>0\}}(1+|V|^2) (1+ 2^{2(k+2)}) (\mathcal{T}_k f)^2 \d \bxi \d \tau \\ 
            \leq & \, C^k \int_{T_{k}}^0 \int_{B_1} \mathds{1}_{\tilde{Q}_k \cap \{\mathcal{T}_{k+1} f>0\}}(1+|V|^2) (\mathcal{T}_k f)^2 \d \bxi \d \tau, 
        \end{aligned}
    \end{equation*}
    and there holds 
       \begin{equation*}
        \begin{aligned}
            \mathscr{E}_{k+1}   \leq& \, C^k \int_{T_k}^{T_{k+1}} \int_{B_1} |\eta_{k+1} \mathcal{T}_{k+1} f|^2 \d \bxi \d \tau + C_{ }^k \int_{T_{k}}^0 \int_{B_1} \mathds{1}_{\tilde{Q}_k \cap \{\mathcal{T}_{k+1} f>0\}}(1+|V|^2) |\mathcal{T}_k f|^2 \d \bxi \d \tau. 
        \end{aligned}
    \end{equation*}
    Similarly, using the boundedness of the cut-off as well as $\mathcal{T}_{k+1} f \leq \mathcal{T}_k f$, the first term on the right-hand side of the previous estimate may be rewritten as 
    \begin{equation*}
        \begin{aligned}
            \int_{T_k}^{T_{k+1}} \int_{B_1} |\eta_{k+1} \mathcal{T}_{k+1} f|^2 \d \bxi \d \tau \leq \int_{T_k}^0 \int_{B_1} \mathds{1}_{\tilde{Q}_k \cap \{ \mathcal{T}_{k+1} f>0\}} |\mathcal{T}_{k+1} f|^2 \d \bxi \d \tau \leq I, 
        \end{aligned}
    \end{equation*}
    and thus 
           \begin{equation}\label{eq:almost there E k plus 1}
        \begin{aligned}
            \mathscr{E}_{k+1}   \leq& \, C_{ }^k \int_{T_{k}}^0 \int_{B_1} \mathds{1}_{\tilde{Q}_k \cap \{\mathcal{T}_{k+1} f>0\}}(1+|V|^2) |\mathcal{T}_k f|^2 \d \bxi \d \tau. 
        \end{aligned}
    \end{equation}
    Using the Sobolev inequality, there holds, for a positive constant $C$ independent of $t,k$, 
    \begin{equation*}
        \Vert \eta_k \mathcal{T}_k f(t,\cdot) \Vert_{L^6({B_1})}^2 \leq C\Vert \eta_k \mathcal{T}_k f(t,\cdot) \Vert_{H^1({B_1})}^2, 
    \end{equation*}
    whence $\Vert \eta_k \mathcal{T}_k f \Vert_{L^2(T_k,0;L^6({B_1}))} \leq C \mathscr{E}_k^{\frac{1}{2}}$, and 
    \begin{equation}\label{eq:est pre markov}
        \begin{aligned}
            \Vert \mathcal{T}_k f \Vert^2_{L^2(\tilde{Q}_k)} = \Vert \eta_k \mathcal{T}_k f \Vert^2_{L^2(\tilde{Q}_k)} &\leq \int_{T_k}^0 \bigg( \int_{B_1} |\eta_k \mathcal{T}_k f(t,\bxi)|^2 \d \bxi \bigg) \d t \\ 
            &\leq \int_{T_k}^0 |{B_1}|^{\frac{2}{3}}\bigg( \int_{B_1}|\eta_k \mathcal{T}_k f(t,\bxi)|^6 \d \bxi \bigg)^{\frac{1}{3}} \d t \\ 
            &= C \Vert \eta_k \mathcal{T}_k f \Vert^2_{L^2(T_k,0;L^6({B_1}))} \\ 
            &\leq C\mathscr{E}_k. 
        \end{aligned}
    \end{equation}
   We remark that this estimate alone would be insufficient for bounding the first term on the right-hand side of \eqref{eq:long with k factors}; indeed, we must have a \emph{non-linear} estimate in order for the iteration procedure to succeed.

Meanwhile, $\Vert \eta_k \mathcal{T}_k f \Vert_{L^\infty(T_k,0;L^2({B_1}))} \leq \mathscr{E}_k^{\frac{1}{2}}$. We interpolate between these two norms. More precisely, using the Interpolation Lemma \ref{lem:dibenedetto classic}, there exists a positive constant $C$, independent of $k$, such that 
    \begin{equation*}
        \begin{aligned}
            \Vert \eta_k \mathcal{T}_k f \Vert_{L^p(\tilde{Q}_k)} \leq C(1+|T_k|) \Big( \Vert \eta_k \mathcal{T}_k f \Vert_{L^2(T_k,0;H^1({B_1}))} + \Vert \eta_k \mathcal{T}_k f \Vert_{L^\infty(T_k,0;L^2({B_1}))} \Big) \leq C \mathscr{E}_k^{\frac{1}{2}}, 
        \end{aligned}
    \end{equation*}
    where $p = \frac{2}{3}(2+3) = \frac{10}{3}$, \textit{i.e.}, $\Vert \eta_k \mathcal{T}_k f \Vert_{L^{\frac{10}{3}}(\tilde{Q}_k)}^2 \leq C \mathscr{E}_k$, from which we deduce 
    \begin{equation*}
        \Vert (\mathcal{T}_k f)^2 \Vert_{L^{\frac{5}{3}}(\tilde{Q}_k)} \leq \Vert \eta_k \mathcal{T}_k f \Vert_{L^{\frac{10}{3}}(\tilde{Q}_k)}^2 \leq C \mathscr{E}_k. 
    \end{equation*}

In turn, returning to \eqref{eq:almost there E k plus 1} and using the H\"older, Jensen, and Minkowski inequalities with the assumption $\Vert V \Vert_{L^q(Q_1)}\leq 1$, there holds 
    \begin{equation*}
        \begin{aligned}
            \mathscr{E}_{k+1}  &\leq C^k |\tilde{Q}_k \cap \{\mathcal{T}_{k+1} f>0\}|^{1-\frac{2}{q}-\frac{3}{5}} \Vert 1 + |V|^2 \Vert_{L^q(Q_1)} \Vert (\mathcal{T}_k f)^2 \Vert_{L^{\frac{5}{3}}(\tilde{Q}_k)}   \\ 
            &\leq C_{ }^k |\tilde{Q}_k \cap \{\mathcal{T}_{k+1} f>0\}|^{1-\frac{2}{q}-\frac{3}{5}}\mathscr{E}_k; 
        \end{aligned}
    \end{equation*}
   note that the application of H\"older's inequality is justified due to the condition $V \in L^q(Q_1)$ for $q>5$, \textit{i.e.}, $\epsilon = 1-2/q-3/5 \in (0,1)$. By applying the Markov inequality, using \eqref{eq:when you are on the next set} to write $\tilde{Q}_k \cap \{\mathcal{T}_{k+1} f>0\} \subset \{ |\eta_k \mathcal{T}_k f|^2 > 2^{-2(k+2)} \}$, we estimate, using also the bound \eqref{eq:est pre markov}, 
    \begin{equation*}
        |\tilde{Q}_k \cap \{\mathcal{T}_{k+1} f> 0\}| \leq 2^{2(k+2)} \Vert \eta_k \mathcal{T}_k f \Vert^2_{L^2(\tilde{Q}_k)} \leq C^k \mathscr{E}_k, 
    \end{equation*}
    whence we get the desired non-linear estimate \eqref{eq:key est for iteration}.

\smallskip

    \noindent 3. \textit{Initialisation and iterative procedure}: Applying the standard iteration lemma \cite[\S 1, Lemma 4.1]{DiBenedetto} to the recursive estimate relation \eqref{eq:key est for iteration}, we deduce that there exists $\delta_* = \delta(C_*,\epsilon)>0$ sufficiently small such that if $\mathscr{E}_0 \leq \delta_*$, then $\lim_{k\to\infty}\mathscr{E}_k = 0$. The Monotone Convergence Theorem and \eqref{eq:est pre markov} then imply 
    \begin{equation*}
        \int_{Q_{\frac{1}{2}}} \big(f - \frac{1}{2}\big)_+^2 \d \bxi \d t = \lim_{k\to\infty}\Vert \mathcal{T}_k f \Vert^2_{L^2(\tilde{Q}_k)} \leq \lim_{k\to\infty}\mathscr{E}_k = 0, 
    \end{equation*}
    which yields the conclusion of the lemma. 
\end{proof}

Proposition \ref{prop:interior local boundedness proposition} now follows as a simple corollary of Lemma \ref{lem:interior local boundedness weak sol} by a standard scaling argument using Lemma \ref{lem:rescaling i}.

\begin{proof}[Proof of Proposition \ref{prop:interior local boundedness proposition}]
    
Let $f_r,U_r$ be defined from $f$ as per equation \eqref{eq:rescaling i} in the proof of Lemma \ref{lem:rescaling i}, with $\delta$ chosen to be the specific value $\delta_*$. We then apply Lemma \ref{lem:interior local boundedness weak sol} to $f_r$, from which we obtain 
\begin{equation}\label{eq:bound on frplus}
    (f_r)_+ \leq \frac{1}{2} \quad \text{in } Q_{\frac{1}{2}}.
\end{equation}
Then, using the non-negativity of $f$ and the rescaling \eqref{eq:rescaling i} to transfer the bound \eqref{eq:bound on frplus}, we obtain the result.   
\end{proof}

\subsubsection{Boundedness away from initial time}

The smallness of the radius $r$ of the subcylinder $Q_r(t,\bxi)$ constrains the result of Proposition \ref{prop:interior local boundedness proposition} to being local in the interior. However, the periodicity of the problem actually means that the result is global-in-space, while being local-in-time; this is manifestly clear from the fact that the constraint on the size of $r$ in $Q_r(t,\bxi)$ depends only on the coordinate $t$, and not on $\bxi$, as is shown in the criterion \eqref{eq:r criterion local boundedness}. 

Our objective is therefore to extend the result of Proposition \ref{prop:interior local boundedness proposition} from subcylinders to infinite strips away from the initial time, \textit{i.e.}~Theorem \ref{thm:global boundedness away from initial time}, which is proved by an exhaustion argument.

\begin{proof}[Proof of Proposition \ref{thm:global boundedness away from initial time}]
    Fix $t \in (0,T)$. Define $r_t:= \frac{1}{2}\min\{1,\sqrt{t/2}\}$. Given this radius $r_t$, select $\{\bxi_1,\dots,\bxi_N\}$ to be any finite collection of points in $\Upsilon$ chosen such that 
    \begin{equation*}
        \Upsilon \subset \bigcup_{j=1}^{N} B_{r_t}(\bxi_j); 
    \end{equation*}
    it is clear that such a collection exists, and an easy argument shows that we may take $N = \ceil{C r_t^{-3}}$ for some positive constant $C$ depending only on $\Upsilon$. Similarly, let $\{t_0,\dots,t_M\}$ be any collection of times such that $t = t_0 < t_1 < \dots < t_M = T$ such that $|t_{j}-t_{j-1}|<r_t^2$ for all $j \in \{1,\dots,M\}$; as before, one may take $M=\ceil{C r_t^{-2}}$ for a suitable constant $C$ depending only on $T$. It follows that 
    \begin{equation*}
        (t,T) \times \Upsilon \subset \bigcup_{i=0}^M\bigcup_{j=1}^N Q_{r_t}(t_i,\bxi_j), 
    \end{equation*}
    and thus 
    \begin{equation*}
        \begin{aligned}
            \Vert f \Vert_{L^\infty((t,T)\times\Upsilon)} \leq \sum_{i=0}^M \sum_{j=1}^N \Vert f \Vert_{L^\infty(Q_{r_t}(t_i,\bxi_j))}. 
        \end{aligned}
    \end{equation*}
    Observe that, for each subcylinder $Q_{r_t}(t_i,\bxi_j)$ there holds $r_t < \min\{1,\sqrt{t_i/2}\}$, whence we may apply Proposition \ref{prop:interior local boundedness proposition} to get 
    \begin{equation*}
        \Vert f \Vert_{L^\infty((t,T)\times\Upsilon)} \leq \underbrace{MN}_{\leq C r_t^{-5}} (1+r_t^{-\frac{3}{2}}) \Vert f \Vert_\mathscr{P}. 
    \end{equation*}
    The result then follows from the definition of $r_t$. 
\end{proof}

\subsection{Global-in-time boundedness for $L^\infty$ initial data}\label{subsec:alikakos}

In this section we prove Theorem \ref{thm:alikakos}, concerning the global-in-time boundedness of weak solutions, assuming initial data in $L^\infty$. Our strategy is based on \cite[\S 3]{KimZhangII}. We note that this section is separate from the rest of the regularity analysis. 

Before proceeding to the proof of Theorem \ref{thm:alikakos}, we recall a version of a technical lemma from \cite{KimZhangII}; we omit the proof, which can be found in \cite[Appendix A]{KimZhangII}. 

\begin{lemma}[Lemma 3.2 of \cite{KimZhangII}]\label{lem:iteration kz}
    Let $A_k:[0,\infty) \to [0,\infty)$ be a sequence of functions satisfying the differential inequality 
    \begin{equation*}
        \frac{\der}{\der t} A_k + C_0 A_k \leq C_1^k (A_{k-1})^2 \qquad \text{for all } t, k, 
    \end{equation*}
 for some positive constants $C_0,C_1$, and assume that $A_0(t)$ is uniformly bounded in time and $\{A_k(0)\}_k$ is uniformly bounded in $k$. Then, with $n_k = 2^k$, the sequence $\{A_k^{1/n_k}(t)\}_k$ is uniformly bounded in time. 
\end{lemma}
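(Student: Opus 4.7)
The strategy is a Gronwall estimate followed by an iterative argument for the suprema $M_k:=\sup_{t\ge 0}A_k(t)$, culminating in a geometric-product bound for the sequence $M_k^{1/n_k}$.

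First, I would multiply the differential inequality by $e^{C_0 t}$ and integrate from $0$ to $t$. Bounding $A_{k-1}^2(s)\le M_{k-1}^2$ for $s\in[0,t]$ (finite by induction on $k$, since $M_0<\infty$ by hypothesis), this yields
\[
A_k(t)\le A_k(0)e^{-C_0 t}+\frac{C_1^k M_{k-1}^2}{C_0}\bigl(1-e^{-C_0 t}\bigr)\le A_k(0)+\frac{C_1^k}{C_0}M_{k-1}^2,
\]
so that $M_k\le A_k(0)+\tfrac{C_1^k}{C_0}M_{k-1}^2$, and in particular $M_k<\infty$.

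Next, I would set $L_k:=M_k^{1/n_k}$ and $K:=\sup_k A_k(0)^{1/n_k}$, which is finite by hypothesis (since $\sup_k A_k(0)<\infty$ forces $A_k(0)^{1/n_k}\le\max(1,\sup_k A_k(0))$). Using $a+b\le 2\max(a,b)$ together with the identity $2n_{k-1}=n_k$ gives
\[
L_k\le 2^{1/n_k}\max\bigl(K,\,\alpha_k L_{k-1}\bigr),\qquad \alpha_k:=\bigl(C_1^k/C_0\bigr)^{1/n_k}.
\]
Setting $\widetilde L_k:=\max(K,L_k)$, the trivial inequality $\max(K,\alpha_k\widetilde L_{k-1})\le\max(1,\alpha_k)\widetilde L_{k-1}$ (valid since $\widetilde L_{k-1}\ge K$) produces the simple multiplicative recursion $\widetilde L_k\le\Gamma_k\widetilde L_{k-1}$ with $\Gamma_k:=2^{1/n_k}\max(1,\alpha_k)$.

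The main obstacle is that $C_1^k/C_0>1$ for large $k$, so a direct ansatz of the form $M_k\le L^{n_k}$ with $L$ independent of $k$ cannot close the induction. The key observation saving the argument is that $n_k=2^k$ grows exponentially in $k$, which dominates the polynomial factor $k$ appearing in $\log C_1^k$; concretely,
\[
\log\Gamma_k=\frac{\log 2+\max\bigl(0,\,k\log C_1-\log C_0\bigr)}{2^k}=O\bigl(k\cdot 2^{-k}\bigr),
\]
rendering $\sum_{k\ge 1}\log\Gamma_k$ convergent. Hence $\prod_{j\ge 1}\Gamma_j<\infty$ and one concludes $\widetilde L_k\le\widetilde L_0\prod_{j=1}^k\Gamma_j\le C$ uniformly in $k$. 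Since $A_k^{1/n_k}(t)\le L_k\le\widetilde L_k$ for every $t\ge 0$, this delivers the asserted uniform bound.
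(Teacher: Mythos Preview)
Your argument is correct. The paper itself does not supply a proof of this lemma; it simply cites \cite{KimZhangII} and omits the details. Your approach---Gronwall to obtain the recursion $M_k\le A_k(0)+C_0^{-1}C_1^k M_{k-1}^2$, then passage to $1/n_k$-th powers and control of the resulting multiplicative recursion via summability of $\sum_k k\,2^{-k}$---is the standard route for such Moser-type iteration lemmas and is in the spirit of the argument in the cited reference.
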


\begin{proof}[Proof of Theorem \ref{thm:alikakos}]
    Let $n \geq 2$. We test the equation \eqref{eq:main eqn} against $nf^{n-1}$ and, using the boundedness of $\rho$, obtain 
    \begin{equation*}
        \frac{\der}{\der t}\int_\Upsilon f^{n} \d \bxi + \frac{4(n-1)}{n} \min\{D_e,1\} \int_\Upsilon |\nabla_{\bxi} f^{\frac{n}{2}}|^2 \d \bxi \leq 2(n-1)\Pe \int_\Upsilon f^{\frac{n}{2}} |\nabla f^{\frac{n}{2}}| \d \bxi, 
    \end{equation*}
    which, by applying Young's inequality and using the lower bound $4(n-1)/n \geq 2$, implies 
   \begin{equation*}
        \frac{\der}{\der t}\int_\Upsilon f^{n} \d \bxi + \frac{3}{2} \min\{D_e,1\} \int_\Upsilon |\nabla_{\bxi} f^{\frac{n}{2}}|^2 \d \bxi \leq \frac{2(n-1)^2\Pe^2}{\min \{D_e,1\}} \Vert f^{\frac{n}{2}} \Vert^2_{L^2(\Upsilon)}. 
    \end{equation*} 
    By applying the Gagliardo--Nirenberg inequality, we get 
    \begin{equation*}
      \begin{aligned}  \Vert f^{\frac{n}{2}} \Vert_{L^2(\Upsilon)} &\leq C_{GN} \Big( \Vert \nabla_{\bxi} f^{\frac{n}{2}} \Vert_{L^2(\Upsilon)}^{\frac{3}{5}} \Vert f^{\frac{n}{2}} \Vert^{\frac{2}{5}}_{L^1(\Upsilon)} + \Vert f^{\frac{n}{2}} \Vert_{L^1(\Upsilon)} \Big), 
      \end{aligned}
    \end{equation*}
   from which, by using Young's inequality, we deduce 
   \begin{equation}\label{eq:lower bound gn}
    \Vert \nabla_{\bxi} f^{\frac{n}{2}} \Vert_{L^2(\Upsilon)} 
 \geq  \frac{1}{C_{GN}'} \Vert f^{\frac{n}{2}} \Vert_{L^2(\Upsilon)} - \Vert f^{\frac{n}{2}} \Vert_{L^1(\Upsilon)}, 
   \end{equation}
   with $C_{GN}'=\frac{7}{5}C_{GN}$, as well as 
   \begin{equation*}
       \begin{aligned}  \Vert f^{\frac{n}{2}} \Vert_{L^2(\Upsilon)}^2 &\leq \epsilon^{\frac{5}{3}}\frac{6 C_{GN}^2}{5} \Vert \nabla_{\bxi} f^{\frac{n}{2}} \Vert_{L^2(\Upsilon)} + 2C_{GN}^2\Big( 1 + \frac{2}{5 \epsilon^{\frac{5}{2}}} \Big) \Vert f^{\frac{n}{2}} \Vert_{L^1(\Upsilon)}^2, 
      \end{aligned}
   \end{equation*}
for all $\epsilon>0$. Consequently, by setting 
$$\epsilon = \left(\frac{5 \min\{D_e,1\}^2}{24(n-1)^2 \Pe^2 C_{GN}^2} \right)^{3/5},$$
we get 
  \begin{equation*}
  \begin{aligned}   
  \frac{\der}{\der t}\int_\Upsilon f^{n} \d \bxi + \min\{D_e,1\} \Vert \nabla_{\bxi} f^{\frac{n}{2}} \Vert_{L^2(\Upsilon)}^2 &\leq (n-1)^2\frac{4 \Pe^2 C_{GN}^2}{\min\{D_e,1\}} \Big( 1 + \frac{2}{5 \epsilon^{\frac{5}{2}}} \Big) \Vert f^{\frac{n}{2}} \Vert_{L^1(\Upsilon)}^2. 
        \end{aligned}
    \end{equation*}
Thus, by defining 
    \begin{equation*}
        c_0 := \frac{\min\{D_e,1\}}{2(C_{GN}')^2}, \quad c_1 := \min\{D_e,1\} + \frac{4(n-1)^2 \Pe^2 C_{GN}^2}{\min\{D_e,1\}} \Big( 1 + \frac{2}{5 \epsilon^{\frac{5}{2}}} \Big) , 
    \end{equation*}
   using the lower bound \eqref{eq:lower bound gn}, we obtain 
      \begin{equation*}   
  \frac{\der}{\der t} \Vert f^{n} \Vert_{L^1(\Upsilon)} + c_0 \Vert f^n \Vert_{L^1(\Upsilon)} \leq c_1 n^2 \Vert f^{\frac{n}{2}} \Vert^2_{L^1(\Upsilon)}, 
    \end{equation*}
    whence, by setting $n=2^k$ and $$A_k(t) := \Vert f^{n_k}(t,\cdot) \Vert_{L^1(\Upsilon)}, $$ 
    we get 
    \begin{equation*}
        \frac{\der}{\der t} A_k + c_0 A_k \leq c_1 4^k (A_{k-1})^2. 
    \end{equation*}
Using the boundedness of the initial data $f_0 \in L^\infty(\Upsilon)$ and applying Lemma \ref{lem:iteration kz}, we deduce global-in-time estimate 
    \begin{equation*}
        \Vert f(t,\cdot) \Vert_{L^{2^k}(\Upsilon)} \leq C(c_0,c_1,\Vert f_0 \Vert_{L^\infty}) \qquad \text{for all } t, k, 
    \end{equation*}
    whence the result follows. 
\end{proof}

\section{Higher Regularity of Weak Solutions}\label{sec:higher reg}

We obtain smoothness of all weak solutions $f$ away from the initial time. Our strategy is to take derivatives in time in the equation \eqref{eq:main eqn}, and then to apply the iteration procedure used to prove Theorem \ref{thm:global boundedness away from initial time} to the resulting equation. For clarity of exposition, in \S \ref{sec:C1 away from initial time} we first show how to do this for the first derivative in time, denoted by $\dot{f}:=\partial_t f$ with $\dot{\rho}$ defined analogously, before moving on to general higher derivatives in \S \ref{sec:smooth away from initial time}, which are denoted by $f^{(n)} := \partial_t^n f$ and $\rho^{(n)} := \partial_t^n \rho$.

The main results of this subsection are as follows, and are used to prove Theorem \ref{thm:smooth away from initial}. 

\begin{prop}[Boundedness of Time Derivatives away from Initial Time]\label{prop:boundedness of all time derivs}
    For all integer $n$, there exists a decreasing positive function $\psi^n :(0,T) \to (0,\infty)$ such that $$\limsup_{t \to 0^+}\psi^n(t) = +\infty$$ and, for a.e.~$t \in (0,T)$, there holds 
    \begin{equation}
        \Vert f^{(n)} \Vert_{L^\infty((t,T)\times\Upsilon)} \leq \psi^n(t). 
    \end{equation}
\end{prop}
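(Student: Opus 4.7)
The plan is to proceed by induction on $n$, following the strategy outlined in the Regularity Bootstrap remark of the introduction. The base case $n=0$ is precisely the content of Proposition \ref{thm:global boundedness away from initial time}. For the inductive step, I assume that for all $k \in \{0,\dots,n-1\}$ there is a decreasing function $\psi^k:(0,T)\to(0,\infty)$ with $\limsup_{t\to 0^+}\psi^k(t)=+\infty$ and $\Vert f^{(k)} \Vert_{L^\infty((t,T)\times\Upsilon)}\leq \psi^k(t)$ for a.e.~$t \in (0,T)$, and aim to produce such a $\psi^n$ at the next level.

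First I would justify, via a standard difference-quotient argument localised away from the initial time (made rigorous by the parabolic regularity already available by induction), that $f^{(n)}$ satisfies, in the weak sense on $(t_0,T)\times\Upsilon$ for any $t_0>0$, an equation of the form
\[
\partial_t f^{(n)} + \dv_{\bxi}\!\left( U f^{(n)} + V_n \right) = \Delta_{\bxi} f^{(n)},
\]
where $U$ is as in \eqref{eq:U} and $V_n$ is a finite linear combination of terms of the form $\rho^{(k)} f^{(n-k)} \e(\theta)$ with $k \in \{1,\dots,n\}$, coming from Leibniz's rule applied to the non-linear drift. A standard parabolic energy estimate (testing against $f^{(n)}$ multiplied by a smooth temporal cut-off vanishing near $t_0$), together with the inductive $L^\infty$ bounds on $f^{(0)},\dots,f^{(n-1)}$, then delivers $f^{(n)} \in \mathscr{P}$ on $(t_0,T)\times\Upsilon$. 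By the Interpolation Lemma \ref{lem:dibenedetto classic} this upgrades further to $f^{(n)}\in L^{10/3}((t_0,T)\times\Upsilon)$.

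The main obstacle, exactly as foreshadowed in the Regularity Bootstrap remark, is to show that $V_n \in L^q((t_0,T)\times\Upsilon)$ for some $q>5$, so that the first De Giorgi lemma of \S \ref{sec:first de giorgi lemma} is applicable. Using the inductive $L^\infty$ control on $f^{(0)},\dots,f^{(n-1)}$, every summand involving $f^{(n-k)}$ with $k\leq n-1$ is already $L^\infty$ in the $f$-factor, so the task reduces to proving $\rho^{(k)} \in L^q_{\mathrm{loc}}$ for some $q>5$ for each $k\leq n$. This is where the dimensional reduction plays its decisive role: integrating the equation for $f^{(k)}$ over $\theta$ yields a \emph{two-dimensional} drift-diffusion equation on $\Omega \subset \mathbb{R}^2$ for $\rho^{(k)}$, with source terms built from $f^{(j)}$, $\rho^{(j)}$ and the first moment $\int_0^{2\pi} f^{(j)}(\cdot,\cdot,\theta)\e(\theta)\d\theta$ for $j\leq k$. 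The Interpolation Lemma \ref{lem:dibenedetto classic} in dimension two delivers the initial improvement $\rho^{(k)}\in L^4(\Omega_T)$, and an iterative application of the Calderón--Zygmund Lemma \ref{lem:CZ periodic} to this two-dimensional equation then bootstraps the integrability of $\rho^{(k)}$ past the threshold $q>5$.

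Once $V_n \in L^q_{\mathrm{loc}}$ with $q>5$ has been secured, the Rescaling Lemma \ref{lem:rescaling i} applied to the equation for $f^{(n)}$ and the iteration procedure of Lemma \ref{lem:interior local boundedness weak sol} give interior local boundedness on parabolic subcylinders, and the exhaustion argument used in the proof of Proposition \ref{thm:global boundedness away from initial time} extends this bound up to a global-in-space estimate of the form $\Vert f^{(n)} \Vert_{L^\infty((t,T)\times\Upsilon)} \leq \psi^n(t)$ for a.e.~$t \in (0,T)$. Monotonicity of $\psi^n$ is enforced by taking suprema, and the blow-up $\limsup_{t\to 0^+}\psi^n(t)=+\infty$ is inherited from the temporal cut-off factors used to localise the underlying parabolic energy estimates, whose constants degenerate as $t_0 \to 0$.
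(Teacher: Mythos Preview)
Your proposal is correct and follows essentially the same inductive strategy as the paper: derive the time-differentiated equation via Leibniz's rule, obtain a parabolic $\mathscr{P}$-bound on $f^{(n)}$, exploit the two-dimensional interpolation to push $\rho^{(n)}$ into $L^q$ with $q>5$ (the paper lands at $L^8$ via an $H^2$ estimate on the $\rho^{(n)}$-equation), and then close with the De Giorgi argument and exhaustion. One organisational point worth noting is that the paper carries $H^2$-type parabolic bounds on $f^{(j)}$ for $j\le n-1$ explicitly in the induction hypothesis, since controlling $\nabla\rho^{(j)},\nabla\p^{(j)}\in L^4$ is needed in the $H^2$ estimate for $\rho^{(n)}$; you will need these too, though they are recoverable from your $L^\infty$ bounds and the equation.
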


We will then use the previous proposition and the computations developed in its proof to prove the next result: 

\begin{prop}[Sobolev Estimates for Time Derivatives away from Initial Time]\label{prop:sobolev of all time derivs}
    For all integer $n$, there exists a decreasing positive function $\Psi^n :(0,T) \to (0,\infty)$ such that $$\limsup_{t \to 0^+}\Psi^n(t) = +\infty$$ and, for a.e.~$t \in (0,T)$, there holds 
    \begin{equation}
        \Vert f^{(n)} \Vert_{W^{2,\frac{10}{3}}((t,T)\times\Upsilon)} \leq \Psi^n(t). 
    \end{equation}
\end{prop}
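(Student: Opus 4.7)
I would argue by induction on $n$, using Proposition \ref{prop:boundedness of all time derivs} and the calculations of its proof as a black box. Taking $n$ time derivatives of \eqref{eq:main eqn intro} gives the drift-diffusion equation
\begin{equation*}
    \partial_t f^{(n)} + \dv\bigl( (1-\rho) f^{(n)} \e(\theta) \bigr) = \Delta_{\bxi} f^{(n)} + \dv G^{(n)}, \qquad G^{(n)} := \sum_{k=1}^n \binom{n}{k} \rho^{(k)} f^{(n-k)} \e(\theta),
\end{equation*}
and by Proposition \ref{prop:boundedness of all time derivs} every $f^{(k)}$ and $\rho^{(k)}$ for $k \le n+1$ is bounded in $L^\infty$ on $(t,T) \times \Upsilon$ (resp.~$(t,T)\times\Omega$), by a function of $t$ blowing up as $t\to 0^+$.

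The key step would be an $H^2$-type energy estimate. Multiplying the equation above by $-\Delta_{\bxi} f^{(n)}$, smoothly cut off in time near $0$, integrating in $\bxi$, using the $L^\infty$ bounds of Proposition \ref{prop:boundedness of all time derivs} together with Cauchy--Young's inequality to absorb the drift and (after one integration by parts) the source $\dv G^{(n)}$, and then invoking the $p=2$ case of the periodic Calder\'on--Zygmund inequality (Lemma \ref{lem:CZ periodic}) to pass from $\Delta_{\bxi} f^{(n)} \in L^2$ to $\nabla^2_{\bxi} f^{(n)} \in L^2$, I would deduce that
\begin{equation*}
    \nabla_{\bxi} f^{(n)} \in L^\infty(t,T; L^2(\Upsilon)) \cap L^2(t,T; H^1(\Upsilon)).
\end{equation*}
The Interpolation Lemma \ref{lem:dibenedetto classic}, applied with $d=3$ and $m=p=2$ so that $q=10/3$, then upgrades this to $\nabla_{\bxi} f^{(n)} \in L^{10/3}((t,T) \times \Upsilon)$.

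To close the argument I would view the equation as the elliptic problem $-\Delta_{\bxi} f^{(n)} = F^{(n)}$ in $\bxi$, with
\begin{equation*}
    F^{(n)} = -\partial_t f^{(n)} - \dv\bigl((1-\rho) f^{(n)} \e(\theta) \bigr) + \dv G^{(n)},
\end{equation*}
and verify that $F^{(n)} \in L^{10/3}((t,T)\times\Upsilon)$. Indeed $\partial_t f^{(n)} = f^{(n+1)} \in L^\infty$ by Proposition \ref{prop:boundedness of all time derivs}; the middle term, expanded by the product rule, is a sum of products of $L^\infty$ quantities with $\nabla f^{(n)}$ and $\nabla \rho$, both in $L^{10/3}$ by the previous paragraph (noting that Minkowski's inequality applied to $\nabla \rho = \int \nabla f \d \theta$ transfers $L^{10/3}$ bounds from $f$ to $\rho$); and the expansion of $\dv G^{(n)}$ produces contributions in $\nabla \rho^{(k)}$ and $\nabla f^{(n-k)}$ for $1 \le k \le n$, all of which lie in $L^{10/3}$ either by the inductive hypothesis (for $n-k < n$) or by the current step together with Minkowski (for the top-order term $\nabla \rho^{(n)}$). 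A further application of Lemma \ref{lem:CZ periodic} in its $L^{10/3}$ form, used slice by slice in $t$ and integrated, then gives $\nabla^2_{\bxi} f^{(n)} \in L^{10/3}$, whence $f^{(n)} \in W^{2,10/3}((t,T) \times \Upsilon)$ with norm dominated by some decreasing $\Psi^n(t)$ blowing up as $t\to 0^+$.

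The hard part will be the $H^2$ energy estimate itself: upon testing against $-\Delta_{\bxi} f^{(n)}$, the drift and nonlocal source terms produce mixed quadratic contributions of the form $\nabla \rho \cdot \nabla f^{(n)}$ and $\nabla \rho^{(k)} \cdot \nabla f^{(n-k)}$ that are not absorbable by Cauchy--Young alone. As in the \emph{Regularity Bootstrap} remark, these must be handled by exploiting the improved integrability of $\nabla \rho^{(k)}$ afforded by the dimensionality reduction (since $\rho$ lives on $\Omega \subset \mathbb{R}^2$, the Interpolation Lemma yields $L^4$ for $\nabla \rho^{(k)}$ rather than merely $L^{10/3}$), combined with the inductive control on the lower-order quantities $f^{(n-k)}$.
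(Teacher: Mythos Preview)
Your approach is essentially the same as the paper's. Two remarks are worth making. First, the $H^2$-type estimate you flag as the ``hard part'' is already available: it is exactly estimate \eqref{eq:H2 parabolic at n level} from the inductive lemma used to prove Proposition~\ref{prop:boundedness of all time derivs}, so in the paper's version the proof of Proposition~\ref{prop:sobolev of all time derivs} is only a few lines. Second, your anticipated difficulty with the mixed terms $|\Delta_{\bxi} f^{(n)}|\,|\nabla\rho^{(k)}|\,|f^{(n-k)}|$ and $|\Delta_{\bxi} f^{(n)}|\,|\rho^{(k)}|\,|\nabla f^{(n-k)}|$ is not actually there: since Proposition~\ref{prop:boundedness of all time derivs} gives $f^{(j)},\rho^{(j)}\in L^\infty$ for \emph{all} $j\le n$, one simply pulls the $L^\infty$ factor out and applies Cauchy--Young with the remaining $L^2$ gradient; the improved $L^4$ integrability of $\nabla\rho^{(k)}$ is not needed at this stage. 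A minor omission: for the full $W^{2,10/3}$ norm on the \emph{space-time} cylinder $(t,T)\times\Upsilon$ you also need $\partial_t^2 f^{(n)}=f^{(n+2)}\in L^\infty$ and $\partial_t\nabla_{\bxi} f^{(n)}=\nabla_{\bxi} f^{(n+1)}\in L^{10/3}$, both of which follow immediately from what you already have applied at levels $n+1$ and $n+2$.
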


Theorem \ref{thm:smooth away from initial} then follows as an immediate corollary of Proposition \ref{prop:sobolev of all time derivs}, as shown below. 

\begin{proof}[Proof of Theorem \ref{thm:smooth away from initial}]
Morrey's inequality and Proposition \ref{prop:sobolev of all time derivs} implies that $f^{(n)} \in C^{\frac{4}{5}}((t,T)\times\Upsilon)$, where we used that the dimension of $(t,T)\times\Upsilon$ is $4< 2\cdot 10/3$. It then follows that, for all integers $n$, the function $f^{(n)}$ is continuous on the subset $(t,T)\times \Upsilon$. By returning to the equation and differentiating, a straightforward argument shows that the continuity of $\{f^{(n)}\}_n$ implies continuity of the derivatives with respect to $\bxi$ of all orders, and analogously for all mixed-derivatives in $t$ and $\bxi$. The proof is complete. 
\end{proof}

\subsection{Boundedness of $\dot{f}$ away from initial time}\label{sec:C1 away from initial time}

The goal of this subsection is to prove the case $n=1$ in Proposition \ref{prop:boundedness of all time derivs}. Our underlying strategy is to derive an equation for $\dot{f}$ to which the De Giorgi method can be applied. First, we must derive a $H^2$-type bound on the solution away from the initial time, which is the content of the next lemma.

\begin{lemma}[$H^2$-type Estimate for $f$ away from Initial Time]\label{lem:H2 bound for C1}
   There exists a positive constant $C$, independent of $t$, such that for a.e.~$t \in (0,T)$ there holds 
    \begin{equation*}
       \begin{aligned} \Vert \nabla_{\bxi}&f\Vert^2_{L^\infty(t,T;L^2(\Upsilon))} + \Vert \Delta_{\bxi} f\Vert^2_{L^2((t,T)\times\Upsilon)} \\ 
       &\leq C \big(1+\Vert \nabla_{\bxi}f(t,\cdot) \Vert_{L^2(\Upsilon)}^2\big) \big(1+\Vert f \Vert_{L^\infty((t,T)\times\Upsilon)}^2\big)^2 \exp\Big( T\big( 1+\Vert f \Vert_{L^\infty((t,T)\times\Upsilon)}^2 \big) \Big), 
       \end{aligned}
    \end{equation*}
    and 
    \begin{equation*}
        \Vert \dot{f} \Vert_{L^2((t,T)\times\Upsilon)} \leq C \Big( \Vert \nabla_{\bxi}f(t,\cdot) \Vert_{L^2(\Upsilon)} + \big(1+\Vert f \Vert_{L^\infty((t,T)\times\Upsilon)}\big) \Vert \nabla f \Vert_{L^2(0,T;H^1(\Upsilon))} \Big). 
    \end{equation*}
\end{lemma}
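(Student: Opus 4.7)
The plan is to derive the two estimates from separate energy identities obtained by testing the equation (formally) against $-\Delta_{\bxi} f$ for the first, and against $\dot{f}$ for the second. Since neither test function a priori lies in $L^2(0,T;H^1_\per(\Upsilon))$ under the sole assumption that $f$ is a weak solution in the sense of Definition \ref{def:concept of solution}, both identities first need to be justified rigorously, e.g. by a Faedo--Galerkin truncation or by mollifying the initial data and using standard parabolic theory on the approximate problem, and then passing to the limit in the resulting estimates (which are independent of the regularisation parameter). This approximation step is the main technical point of the proof; the remaining arguments are formal manipulations.

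For the first estimate, the test against $-\Delta_{\bxi} f$ combined with integration by parts (using periodicity to eliminate boundary terms) yields the identity
\begin{equation*}
\tfrac{1}{2}\tfrac{\der}{\der t}\|\nabla_{\bxi} f\|_{L^2(\Upsilon)}^2 + \|\Delta_{\bxi} f\|_{L^2(\Upsilon)}^2 = \int_\Upsilon \dv_{\bxi}(Uf)\,\Delta_{\bxi} f \d\bxi.
\end{equation*}
Expanding $\dv_{\bxi}(Uf) = (1-\rho)\e(\theta)\cdot\nabla f - f\,\e(\theta)\cdot\nabla \rho$ and applying Cauchy--Young, one absorbs half of $\|\Delta_{\bxi} f\|_{L^2}^2$ into the left, and the bound $\rho \in [0,1]$ handles the first summand; the delicate term is $\int_\Upsilon f^2 |\nabla \rho|^2 \d\bxi$. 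Here the key observation is that $\nabla \rho$ is $\theta$-independent, so integrating in $\theta$ first and then applying Jensen's inequality to $\nabla \rho(t,x) = \int_0^{2\pi} \nabla f(t,x,\theta)\d\theta$ gives $\int_\Upsilon f^2 |\nabla \rho|^2 \d\bxi \leq (2\pi)^2 \|f\|_{L^\infty((t,T)\times\Upsilon)}^2 \|\nabla_{\bxi} f\|_{L^2(\Upsilon)}^2$. The resulting differential inequality $\tfrac{\der}{\der t}\|\nabla_{\bxi} f\|_{L^2}^2 + \|\Delta_{\bxi} f\|_{L^2}^2 \leq C(1+\|f\|_{L^\infty}^2)\|\nabla_{\bxi} f\|_{L^2}^2$ is then closed by Grönwall on $(t,s)$, $s \leq T$; the starting time $t$ must be chosen so that $\nabla_{\bxi} f(t,\cdot) \in L^2(\Upsilon)$ (valid for a.e.~$t \in (0,T)$ since $f \in L^2(0,T;H^1_\per(\Upsilon))$) and so that the $L^\infty$ bound from Proposition \ref{thm:global boundedness away from initial time} applies on $(t,T)\times\Upsilon$.

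For the second estimate, testing against $\dot{f}$ instead yields
\begin{equation*}
\int_\Upsilon |\dot{f}|^2 \d\bxi + \tfrac{1}{2}\tfrac{\der}{\der t}\|\nabla_{\bxi} f\|_{L^2(\Upsilon)}^2 = - \int_\Upsilon \dv_{\bxi}(Uf)\,\dot{f}\d\bxi.
\end{equation*}
Cauchy--Young absorbs half of $\|\dot f\|_{L^2}^2$ into the left, leaving $\|\dv_{\bxi}(Uf)\|_{L^2}^2$ on the right, which is controlled by $C(1+\|f\|_{L^\infty}^2)\|\nabla f\|_{L^2(\Upsilon)}^2$ via the same $\rho$-reduction trick. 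Integrating over $(t,T)$, discarding the non-negative endpoint contribution $\|\nabla_{\bxi}f(T,\cdot)\|_{L^2}^2$, taking a square root, and using the trivial embedding $\|\nabla f\|_{L^2(\Upsilon_T)} \leq \|\nabla f\|_{L^2(0,T;H^1(\Upsilon))}$ gives the stated form of the bound on $\dot f$.

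The hardest part is therefore the rigorous justification of the two formal test functions; once this is in place through the approximation procedure above and the estimates are passed to the limit by weak lower semi-continuity, the proof reduces to Cauchy--Young plus Grönwall. The sole non-generic ingredient is the dimensional reduction for $\nabla\rho$, which exploits the fact that $\rho$ is obtained by integrating out the angle variable.
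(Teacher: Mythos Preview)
Your proposal is correct and, for the first estimate, essentially identical to the paper's proof: the paper carries out the regularisation explicitly via spatial Friedrichs mollification $f_\varepsilon = f*\eta_\varepsilon$ (so that $\Delta_{\bxi} f_\varepsilon \in L^2$ becomes a legitimate test function), introduces the commutator error $E_\varepsilon = (Uf)*\eta_\varepsilon - U f_\varepsilon$, and then performs precisely the Cauchy--Young/Jensen/Gr\"onwall argument you describe before letting $\varepsilon\to 0$.

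For the second estimate there is a minor difference worth noting. You obtain the $\dot f$ bound by a separate energy identity, testing the equation against $\dot f$; the paper instead reads $\dot f = \Delta_{\bxi} f - \dv_{\bxi}(Uf)$ directly from the equation and takes the $L^2$ norm of both sides, so that the bound on $\|\dot f\|_{L^2((t,T)\times\Upsilon)}$ is inherited from the $\|\Delta_{\bxi} f\|_{L^2}$ control just established in the first estimate (this is why $\|\nabla f\|_{L^2(0,T;H^1(\Upsilon))}$, which dominates $\|\Delta_{\bxi} f\|_{L^2}$, appears on the right-hand side). Your route is equally valid and slightly more self-contained, since it does not invoke the first estimate; the paper's route is shorter once the first estimate is in hand. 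Both lead to the stated inequality.
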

Remark that $\nabla_{\bxi} f \in L^2(\Upsilon_T)$ implies, using Markov's inequality, that $\Vert \nabla_{\bxi} f(t,\cdot)\Vert_{L^2(\Upsilon)}^2$ is finite a.e.~in $(0,T)$; using also Proposition \ref{thm:global boundedness away from initial time}, the right-hand sides of the previous estimates are therefore finite. Furthermore the boundedness of $\Vert \Delta_{\bxi} f \Vert_{L^2((t,T)\times\Upsilon)}$ yields an identical estimate for the full Hessian $\Vert \nabla^2_{\bxi} f \Vert_{L^2((t,T)\times\Upsilon)}$ by virtue of Lemma \ref{lem:CZ periodic}.

\begin{proof}
    Let $\eta \in C^\infty_c(B_1)$ be the usual non-negative bump function with unit integral $\int_{\mathbb{R}^3}\eta(\bxi) \d \bxi = 1$, and define the sequence of Friedrichs mollifiers $\eta_\varepsilon(\bxi) := \varepsilon^{-3}\eta(\bxi/\varepsilon)$. Correspondingly, define $f_\varepsilon := f(t,\cdot)*\eta_\varepsilon = \int_{\mathbb{R}^3} \eta_\varepsilon(\bxi-\bzeta) f(t,\bzeta) \d \bzeta$ and $\rho_\varepsilon := \int_0^{2\pi} f_\varepsilon \d \theta$. Note that this operation preserves the periodicity and that the convolution is well-defined as $f,\rho$ extend periodically to the full space. It is straightforward to verify that there holds 
\begin{equation}\label{eq:mollified eqn in H2 bound for C1}
    \partial_t f_\varepsilon + \dv_{\bxi}(U f_\varepsilon + E_\varepsilon)  = \Delta_{\bxi}f_\varepsilon, 
\end{equation}
    in the weak sense, where 
    \begin{equation*}
     \begin{aligned}   U = (1-\rho)\left(\begin{matrix}\e(\theta)\\0\end{matrix}\right), \qquad E_\varepsilon(t,\bxi) :=& \, (Uf)*\eta_\varepsilon - U f_\varepsilon \\ 
     =& \, \int_{\mathbb{R}^3} \eta_\varepsilon(\bxi-\bzeta) f(t,\bzeta) \big( U(t,\bzeta) - U(t,\bxi) \big) \d \bzeta, 
     \end{aligned}
    \end{equation*}
    where the convolution is taken only with respect to the space-angle variable $\bxi$. Using also the positivity of $f,\eta_\varepsilon$, there holds 
    \begin{equation}\label{eq:error bound Eeps}
        \begin{aligned}
     |U| \leq 1, \qquad       |E_\varepsilon(t,\bxi)| \leq 2f_\varepsilon(t,\bxi) \quad \text{a.e.~in } \Upsilon_T. 
        \end{aligned}
    \end{equation}
    Observe that, for all fixed $\varepsilon$, we have $\Delta_{\bxi}f_\varepsilon \in L^2((t,T)\times\Upsilon)$ for a.e.~$t \in (0,T)$, whence \eqref{eq:mollified eqn in H2 bound for C1} holds in the strong sense and we may test with this quantity to obtain 
    \begin{equation}\label{eq:fix botch i}
        \begin{aligned}
         \frac{1}{2}\frac{\der}{\der t}\int_\Upsilon |\nabla_{\bxi} f_\varepsilon|^2 \d \bxi +   \int_\Upsilon |\Delta_{\bxi} f_\varepsilon|^2 \d \bxi =& \int_\Upsilon \dv_{\bxi}(U f_\varepsilon + E_\varepsilon) \Delta_{\bxi} f_\varepsilon \d \bxi \\ 
         \leq& \int_\Upsilon |\nabla_{\bxi} f_\varepsilon| |\Delta_{\bxi} f_\varepsilon| \d \bxi + \int_\Upsilon |\dv_{\bxi}U| |f_\varepsilon| |\Delta_{\bxi} f_\varepsilon| \d \bxi \\ 
         &+ \int_\Upsilon |\dv_{\bxi} E_\varepsilon| |\Delta_{\bxi} f_\varepsilon| \d \bxi; 
        \end{aligned}
    \end{equation}
    we bound each of the three terms on the right-hand side of the above. The first is dealt with using the Young inequality, while for the second term we have 
    \begin{equation*}
        \begin{aligned}
            \int_\Upsilon |\dv_{\bxi}U| |f_\varepsilon| |\Delta_{\bxi} f_\varepsilon| \d \bxi &\leq \Vert f_\varepsilon \Vert_{L^\infty((t,T)\times\Upsilon)} \int_\Upsilon |\nabla \rho| |\Delta_{\bxi} f_\varepsilon| \d \bxi \\ 
            &\leq \frac{1}{4}\Vert \Delta_{\bxi} f_\varepsilon \Vert^2_{L^2(\Upsilon)} + \Vert f_\varepsilon \Vert_{L^\infty((t,T)\times\Upsilon)}^2 \Vert \nabla \rho \Vert_{L^2(\Upsilon)}^2. 
        \end{aligned}
    \end{equation*}
    For the final term, we write 
    \begin{equation*}
        \begin{aligned}
             \dv_{\bxi} E_\varepsilon = (\dv_{\bxi} (Uf))*\eta_\varepsilon - (\dv_{\bxi}U) f_\varepsilon - U \cdot \nabla_{\bxi} f_\varepsilon, 
        \end{aligned}
    \end{equation*}
    from which we estimate, using standard properties of mollifiers, 
    \begin{equation*}
        \Vert \dv_{\bxi} E_\varepsilon \Vert_{L^2(\Upsilon)} \leq C\Big( \Vert f \Vert_{L^\infty((t,T)\times\Upsilon)} \Vert \nabla \rho \Vert_{L^2(\Upsilon)} + \Vert \nabla_{\bxi} f \Vert_{L^2(\Upsilon)}    \Big), 
    \end{equation*}
    and hence the final term on the right-hand side of \eqref{eq:fix botch i} is dealt with again using the Cauchy--Young inequality.

    Thus, returning to \eqref{eq:fix botch i}, and using the Cauchy--Young inequality and elementary properties of the Friedrichs mollifier, we find that there exists a constant $C$, independent of $\varepsilon$, such that 
    \begin{equation*}
        \begin{aligned}
         \frac{\der}{\der t}\int_\Upsilon |\nabla_{\bxi} f_\varepsilon|^2 \d \bxi +   \int_\Upsilon |\Delta_{\bxi} f_\varepsilon|^2 \d \bxi \leq C\Big( \Vert f \Vert_{L^\infty((t,T)\times\Upsilon)}^2  + \big( 1+\Vert f \Vert_{L^\infty((t,T)\times\Upsilon)}^2 \big) \Vert \nabla_{\bxi} f \Vert_{L^2(\Upsilon)}^2    \Big). 
        \end{aligned}
    \end{equation*}
    We remind the reader that, since $\nabla_{\bxi} f \in L^2(\Upsilon_T)$, it follows from Markov's inequality that $\Vert \nabla_{\bxi} f(t,\cdot)\Vert_{L^2(\Upsilon)}^2$ is finite a.e.~in $(0,T)$. In turn, for a.e.~$t \in (0,T)$, we deduce from Gr\"onwall's Lemma that there holds 
\begin{equation*}
        \Vert \nabla_{\bxi}f_\varepsilon \Vert^2_{L^\infty(t,T;L^2(\Upsilon))} \leq C \Vert \nabla_{\bxi}f_\varepsilon(t,\cdot) \Vert_{L^2(\Upsilon)}^2 \Vert f \Vert_{L^\infty((t,T)\times\Upsilon)}^2 \exp\Big( T\big( 1+\Vert f \Vert_{L^\infty((t,T)\times\Upsilon)}^2 \big) \Big), 
    \end{equation*}
    whence 
    \begin{equation*}
       \begin{aligned} \Vert &\nabla_{\bxi}f_\varepsilon \Vert^2_{L^\infty(t,T;L^2(\Upsilon))} + \Vert \Delta_{\bxi} f_\varepsilon \Vert^2_{L^2((t,T)\times\Upsilon)} \\ 
       &\leq C \big(1+\Vert \nabla_{\bxi}f_\varepsilon(t,\cdot) \Vert_{L^2(\Upsilon)}^2\big) \big(1+\Vert f \Vert_{L^\infty((t,T)\times\Upsilon)}^2\big)^2 \exp\Big( T\big( 1+\Vert f \Vert_{L^\infty((t,T)\times\Upsilon)}^2 \big) \Big), 
       \end{aligned}
    \end{equation*}
    and we let $\varepsilon \to 0$ to deduce the estimate for $f$. Then, using the equation, there holds 
    \begin{equation}\label{eq:using the equation}
        \Vert \partial_t f \Vert_{L^2((t,T)\times\Upsilon)} \leq \Vert \Delta_{\bxi} f_\varepsilon \Vert_{L^2((t,T)\times\Upsilon)} + \Vert f \Vert_{L^\infty((t,T)\times\Upsilon)}\Vert \nabla \rho \Vert_{L^2(\Omega_T)} + \Vert \nabla f \Vert_{L^2(\Upsilon_T)}. 
    \end{equation}
We conclude the proof by bounding $\Vert\nabla \rho\Vert_{L^2(\Omega_T)}$ by $\Vert\nabla f\Vert_{L^2(\Upsilon_T)}$ using Jensen's inequality. 
\end{proof}

The following corollary of Lemma \ref{lem:H2 bound for C1} is immediate. 

\begin{cor}\label{cor: gradients in L4 rho and p}
    For a.e.~$t \in (0,T)$, there holds 
    \begin{equation*}
        \Vert \nabla \rho \Vert_{L^4((t,T)\times\Omega)} + \Vert \nabla \p \Vert_{L^4((t,T)\times\Omega)} \leq C \Big( \Vert \nabla_{\bxi}f \Vert_{L^\infty(t,T;L^2(\Upsilon))} + \Vert \Delta_{\bxi} f \Vert_{L^2(\Upsilon_T)} \Big). 
    \end{equation*}
\end{cor}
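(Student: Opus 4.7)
The plan is to reduce the $L^4$ bounds on $\nabla\rho$ and $\nabla\p$ on the two-dimensional parabolic domain $(t,T)\times\Omega$ to the $L^\infty_tL^2_x$ and $L^2_tH^1_x$ bounds on these quantities, then invoke the Interpolation Lemma \ref{lem:dibenedetto classic} in the critical two-dimensional case. The right-hand-side of the claimed estimate suggests exactly this pathway, since by Lemma \ref{lem:H2 bound for C1} we control $\nabla_{\bxi}f$ in $L^\infty(t,T;L^2(\Upsilon))$ and $\Delta_{\bxi}f$ (hence, by Lemma \ref{lem:CZ periodic}, the full Hessian $\nabla_{\bxi}^2 f$) in $L^2((t,T)\times\Upsilon)$.

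First I would use Jensen's inequality on the defining integrals
\[
\nabla\rho(t,x)=\int_0^{2\pi}\nabla_x f(t,x,\theta)\d\theta,\qquad \nabla\p(t,x)=\int_0^{2\pi}\nabla_x f(t,x,\theta)\,\e(\theta)\d\theta,
\]
together with $|\e(\theta)|=1$, to obtain the pointwise controls $|\nabla\rho(t,x)|^2+|\nabla\p(t,x)|^2\leq C\int_0^{2\pi}|\nabla_x f(t,x,\theta)|^2\d\theta$ and the analogous bound for second derivatives. Integrating in $x$ yields
\[
\|\nabla\rho\|_{L^\infty(t,T;L^2(\Omega))}+\|\nabla\p\|_{L^\infty(t,T;L^2(\Omega))}\leq C\|\nabla_{\bxi} f\|_{L^\infty(t,T;L^2(\Upsilon))},
\]
and integrating in $x$ and in time gives
\[
\|\nabla^2\rho\|_{L^2((t,T)\times\Omega)}+\|\nabla^2\p\|_{L^2((t,T)\times\Omega)}\leq C\|\nabla_{\bxi}^2 f\|_{L^2((t,T)\times\Upsilon)}\leq C\|\Delta_{\bxi}f\|_{L^2(\Upsilon_T)},
\]
where in the last step I apply the $p=2$ case of Lemma \ref{lem:CZ periodic}, which gives equality of the Hessian and Laplacian $L^2$ norms (modulo the zero-order piece that is dominated by the $L^\infty_tL^2_x$ control on $\nabla_{\bxi}f$).

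With these two bounds in hand, I apply the Interpolation Lemma \ref{lem:dibenedetto classic} to $v=\nabla\rho$ and $v=\nabla\p$ on the planar domain $\Omega\subset\mathbb{R}^2$ (so $d=2$) with $m=p=2$. The exponent is $q=p(d+m)/d=2\cdot 4/2=4$, which is exactly the target integrability. The lemma yields
\[
\|\nabla\rho\|_{L^4((t,T)\times\Omega)}\leq C\bigl(\|\nabla\rho\|_{L^\infty(t,T;L^2(\Omega))}+\|\nabla\rho\|_{L^2(t,T;H^1(\Omega))}\bigr),
\]
and analogously for $\nabla\p$; combining with the previous display gives the stated estimate.

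I do not expect a substantive obstacle here: the dimensionality reduction from $\Upsilon\subset\mathbb{R}^3$ to $\Omega\subset\mathbb{R}^2$ is precisely what lets the Interpolation Lemma push the Sobolev exponent up to $4$ (whereas on $\Upsilon$ the same input would only give $10/3$), and this is the single quantitative ingredient used. The only minor point to keep track of is that the Calder\'on--Zygmund step for $p=2$ really produces equality of $L^2$ norms in Lemma \ref{lem:CZ periodic}, so no spurious zero-order term inflates the right-hand side beyond what is claimed.
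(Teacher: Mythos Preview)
Your proposal is correct and follows essentially the same route as the paper: Jensen's inequality to pass from $f$ to $\rho,\p$, the $p=2$ Calder\'on--Zygmund identity of Lemma~\ref{lem:CZ periodic} to convert $\Delta_{\bxi}f$ into full Hessian control, and then the Interpolation Lemma~\ref{lem:dibenedetto classic} on the two-dimensional domain $\Omega$ with $m=p=2$ to reach $q=4$. The paper's proof is more terse but the logic is identical.
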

\begin{proof}
    Using Jensen's inequality and the relations between $f$ and $\rho,\p$, we obtain for a.e.~$t \in (0,T)$ 
    \begin{equation*}
      \begin{aligned}
       \big( \Vert \nabla \rho \Vert_{L^\infty(t,T;L^2(\Omega))} + \Vert \nabla^2 \rho \Vert_{L^2((t,T)\times\Omega)} \big) +& \big( \Vert \nabla \p \Vert_{L^\infty(t,T;L^2(\Omega))} + \Vert \nabla^2 \p \Vert_{L^2((t,T)\times\Omega)} \big) \\   &\leq C \Big( \Vert \nabla_{\bxi}f \Vert_{L^\infty(t,T;L^2(\Upsilon))} + \Vert \Delta_{\bxi} f \Vert_{L^2(\Upsilon_T)} \Big), 
      \end{aligned}
    \end{equation*}
    where we again used Lemma \ref{lem:CZ periodic}. The result then follows from the conclusion of Lemma \ref{lem:H2 bound for C1} and the Interpolation Lemma \ref{lem:dibenedetto classic}, where we used that $\Omega \subset \mathbb{R}^2$ has lower dimension than $\Upsilon \subset \mathbb{R}^3$. 
\end{proof}

In the next lemma we derive the equation for $\dot{f}$. 

\begin{lemma}[Equation for $\dot{f}$]\label{lem:eqn for f dot}
There exists a positive constant $C$ depending only on $T$ such that there holds 
\begin{equation}\label{eq:dt f in L2H1}
   \begin{aligned}
       \Vert \dot{f} &\Vert_{L^\infty(t,T;L^2(\Upsilon))}^2 +\Vert \dot{f} \Vert_{L^2(t,T;H^1(\Upsilon))}^2 \\ 
       & \, \, \, \,\,\,\,\,\,\,\,\,\, \leq C\big(1+ \Vert f \Vert^2_{L^\infty((t,T)\times\Upsilon)}\big)^2\exp\big({T(1+ \Vert f \Vert^2_{L^\infty((t,T)\times\Upsilon))}}\big)\big(1+\Vert \dot{f}(t,\cdot) \Vert_{L^2(\Upsilon)}^2\big). 
   \end{aligned} 
\end{equation}
Furthermore, for a.e.~$t \in (0,T)$, there holds in the weak sense 
\begin{equation}\label{eq:eqn differentiated once in time}
    \de_t \fd +  \dv \big(  (\fd (1- \rho) 
    - f \rd) \,\e(\theta)\big)
= \Delta_{\bxi} \fd. 
\end{equation}    
\end{lemma}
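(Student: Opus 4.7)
My plan is to derive \eqref{eq:eqn differentiated once in time} by taking a difference quotient in time of the original weak formulation, passing to the limit $h\to 0^+$, and then proving the energy estimate \eqref{eq:dt f in L2H1} by testing the difference-quotient version of the equation against $D_h f$ and invoking Grönwall's inequality. The two prerequisite ingredients that make this strategy viable are the interior $L^\infty$ bound of Proposition \ref{thm:global boundedness away from initial time} (which controls $f$ pointwise in the non-linearity) and the $H^2$-type bound of Lemma \ref{lem:H2 bound for C1}, which in particular guarantees that $\dot f(t,\cdot)\in L^2(\Upsilon)$ for a.e.~$t\in(0,T)$ and that $\dot f \in L^2((t,T)\times\Upsilon)$.

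\textbf{Derivation of the equation via difference quotients.} For $0<h\ll T-t$, I would set $D_h g(\tau,\bxi):=h^{-1}(g(\tau+h,\bxi)-g(\tau,\bxi))$ and subtract the weak formulation \eqref{eq:weak_form_def} at time $\tau+h$ from the same identity at time $\tau$, dividing the result by $h$. Using the algebraic identity
\[
(1-\rho(\tau+h))f(\tau+h)-(1-\rho(\tau))f(\tau)= (1-\rho(\tau+h))(f(\tau+h)-f(\tau))-f(\tau)(\rho(\tau+h)-\rho(\tau)),
\]
the difference quotient $D_h f$ then solves, in the weak sense on $(t,T-h)\times\Upsilon$,
\[
\partial_\tau(D_h f)+\dv\!\Big(\big[(1-\rho(\tau+h))\,D_h f-f(\tau)\,D_h\rho\big]\e(\theta)\Big)=\Delta_\bxi(D_h f).
\]
Since Lemma \ref{lem:H2 bound for C1} yields $\dot f\in L^2((t,T)\times\Upsilon)$, standard difference-quotient theory gives $D_h f\to\dot f$ and $D_h\rho\to\dot\rho$ strongly in $L^2$, while $\rho(\cdot+h)\to\rho$ in $L^2$ by time continuity of $f$. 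Passing to the limit term by term in the equation above yields \eqref{eq:eqn differentiated once in time}.

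\textbf{Energy estimate.} To prove \eqref{eq:dt f in L2H1}, I would test the equation for $D_h f$ against $D_h f$ itself, which is admissible since $D_h f$ inherits the regularity of $f$. This produces
\[
\tfrac12\tfrac{d}{d\tau}\|D_h f\|_{L^2(\Upsilon)}^2+\|\nabla_\bxi D_h f\|_{L^2(\Upsilon)}^2 = \int_\Upsilon \!\big((1-\rho(\tau+h))D_h f-f(\tau)D_h\rho\big)\e(\theta)\cdot\nabla_\bxi D_h f\,d\bxi.
\]
Young's inequality combined with $0\leq 1-\rho\leq 1$ bounds the first summand on the right by $\tfrac14\|\nabla_\bxi D_h f\|_{L^2}^2+\|D_h f\|_{L^2}^2$. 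For the second summand, I would absorb $\|f\|_{L^\infty}$ by Proposition \ref{thm:global boundedness away from initial time} and use the Jensen estimate $\|D_h\rho(\tau,\cdot)\|_{L^2(\Omega)}\leq(2\pi)^{1/2}\|D_h f(\tau,\cdot)\|_{L^2(\Upsilon)}$ to bound it by $\tfrac14\|\nabla_\bxi D_h f\|_{L^2}^2+C\|f\|_{L^\infty((t,T)\times\Upsilon)}^2\|D_h f\|_{L^2}^2$. Absorbing the gradient terms into the left-hand side and applying Grönwall's lemma on $(t,T-h)$, with initial datum $\|D_h f(t,\cdot)\|_{L^2}^2$ bounded (for small $h$) by $\|\dot f(t,\cdot)\|_{L^2}^2$ via Lemma \ref{lem:H2 bound for C1}, I obtain a uniform-in-$h$ bound of the desired form. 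Weak lower semicontinuity of the norms then transfers the estimate to $\dot f$ as $h\to 0^+$.

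\textbf{Main obstacle.} The only genuine subtlety, and the reason for working at the level of $D_h f$ rather than directly with $\dot f$, is to avoid an apparent circularity: using $\dot f$ as a test function in its own equation requires $\dot f\in L^2(t,T;H^1(\Upsilon))$, but this is precisely what the energy estimate is designed to establish. The difference-quotient method breaks the loop by producing the $H^1$ regularity as the \emph{output} of the estimate rather than an input. The remaining care lies in checking that the weak-strong pairings in the nonlinear term, in particular that $(1-\rho(\tau+h))D_h f\to(1-\rho)\dot f$ and $f(\tau)D_h\rho\to f\dot\rho$ in a sense compatible with integration against $\nabla_\bxi D_h f\rightharpoonup \nabla_\bxi\dot f$, pass to the limit; this is routine thanks to the $L^\infty$ bound on $f$ and the $L^2$ strong convergence of the difference quotients.
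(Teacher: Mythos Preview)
Your proposal is correct and follows essentially the same route as the paper: difference quotients in time, test the $D_h f$-equation against $D_h f$, apply Young/Jensen/Gr\"onwall for the uniform-in-$h$ energy bound, then pass to the limit. The only cosmetic difference is that the paper presents the energy estimate \emph{before} the derivation of \eqref{eq:eqn differentiated once in time}, since the $L^2H^1$ bound on $\dot f$ is what justifies passing to the limit in the gradient term of the weak formulation---a point you correctly flag and resolve in your ``Main obstacle'' paragraph.
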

We emphasise that the right-hand side of the estimate \eqref{eq:dt f in L2H1} is finite by virtue of $\dot{f} \in L^2((t,T)\times\Upsilon)$ for a.e.~$t\in(0,T)$, by Lemma \ref{lem:H2 bound for C1}, whence $\Vert \dot{f}(t,\cdot) \Vert_{L^2(\Upsilon)}$ is finite for a.e.~$t \in (0,T)$.

\begin{proof}
Fix $\delta>0$ arbitrarily and extend $f$ by zero outside of $[0,T]$ to the larger time-interval $(-\delta,T+\delta)$; we note that this extends the weak formulation to the larger time-interval and preserves 
\begin{equation*}
    \Vert f \Vert_{L^2(-\delta,T+\delta;H^1(\Upsilon))} = \Vert f \Vert_{L^2(0,T;H^1(\Upsilon))}. 
\end{equation*}
We define, for $0<|h|<\delta$ and a.e.~$(t,\bxi) \in (0,T)\times\Upsilon$, the difference quotients in time: 
\begin{equation*}
    D_h f(t,\bxi) := \frac{f(t+h,\bxi) - f(t,\bxi)}{h}, 
\end{equation*}
and, correspondingly, 
\begin{equation*}
    D_h \rho(t,x) := \frac{\rho(t+h,\bxi) - \rho(t,\bxi)}{h} = \int_0^{2\pi} D_h f(t,x,\theta) \d \theta, 
\end{equation*}
whence $|D_h \rho(t,\x)| \leq \int_0^{2\pi}|D_h f(t,\x,\theta)| \d \theta$ and Jensen's inequality implies 
\begin{equation}\label{eq:diff quotient rho est in terms of f}
    \Vert D_h \rho(t,\cdot) \Vert_{L^2(\Omega)} \leq 2\pi \Vert D_h f(t,\cdot) \Vert_{L^2(\Upsilon)}. 
\end{equation} 

In what follows, we use the notation $\tau_h \rho = \rho(t+h)$; the estimate on $\rho$ implies $0 \leq \tau_h \rho \leq 1$ a.e. Direct computation show that the equation for $D_h f$ reads, for a.e.~$t \in (0,T)$, 
\begin{equation}\label{eq:time diff quot eqn i}
  \partial_t D_h f +   \dv\big( (D_h f (1-\tau_h \rho) - f D_h \rho)\e(\theta) \big) = \Delta_{\bxi} D_h f, 
\end{equation}
in the weak sense. Note that there holds $D_h f \in L^2(0,T;H^1(\Upsilon))$, whence it is an admissible test function to insert into the weak formulation. It follows that, for a.e.~$0<s<t <T$ away from the initial time, 
\begin{equation*}
    \begin{aligned}
        \frac{1}{2}\frac{\der}{\der t}\int_\Upsilon |D_h f|^2 \d \bxi +\int_\Upsilon & |\nabla_{\bxi} D_h f|^2 \d \bxi    \\ 
        &=   \int_\Upsilon (D_h f (1-\tau_h \rho) - f D_h \rho) \e(\theta) \cdot \nabla D_h f \d \bxi \\ 
        &\leq \frac{1}{2}\int_\Upsilon |\nabla_{\bxi} D_h f|^2 \d \bxi + \frac{1}{2}\big( 1 + \Vert f \Vert_{L^\infty((s,T)\times\Upsilon)}^2 \big) \int_\Upsilon |D_h f|^2 \d \bxi, 
    \end{aligned}
\end{equation*}
where we used the Young inequality and the estimate \eqref{eq:diff quotient rho est in terms of f}; we control $D_h \rho$ with $D_h f$ using Jensen's inequality and the Fubini--Tonelli Theorem. It therefore follows that 
\begin{equation}\label{eq:pre gronwall time deriv ii}
    \begin{aligned}
        \frac{\der}{\der t}\int_\Upsilon |D_h f|^2 \d \bxi + \int_\Upsilon |\nabla_{\bxi} D_h f|^2 \d \bxi 
        \leq \big( 1 + \Vert f \Vert_{L^\infty((s,T)\times\Upsilon)}^2 \big)\int_\Upsilon |D_h f|^2 \d \bxi, 
    \end{aligned}
\end{equation}
whence the Gr\"onwall Lemma implies 
\begin{equation*}
    \esssup_{[s,T]} \Vert D_h f \Vert^2_{L^2(\Upsilon)} \leq \big(1+ \Vert f \Vert^2_{L^\infty((s,T)\times\Upsilon)}\big)\exp\big({T(1+ \Vert f \Vert^2_{L^\infty((s,T)\times\Upsilon)})}\big)\Vert D_h f(s) \Vert^2_{L^2(\Upsilon)}. 
\end{equation*}
By letting $h \to 0$, it follows that $\dot{f} := \partial_t f \in L^\infty(s,T;L^2(\Upsilon))$ for a.e.~$s \in (0,T)$ and moreover 
\begin{equation}\label{eq:time deriv in Linfty L2}
     \Vert \dot{f} \Vert_{L^\infty(s,T;L^2(\Upsilon))}^2 \leq C\big(1+ \Vert f \Vert^2_{L^\infty((s,T)\times\Upsilon)}\big)\exp\big({T(1+ \Vert f \Vert^2_{L^\infty((s,T)\times\Upsilon)})}\big)\Vert \dot{f}(s) \Vert_{L^2(\Upsilon)}^2, 
\end{equation}
for some $C$ depending only on $T$. Similarly, we obtain the boundedness of $\Vert \nabla_{\bxi} \dot{f} \Vert_{L^2((t,T)\times\Upsilon)}$ away from the initial time using \eqref{eq:pre gronwall time deriv ii}, from which, using also \eqref{eq:time deriv in Linfty L2}, we deduce, for a.e.~$t \in (0,T)$, 
\begin{equation*}
    \Vert \dot{f} \Vert_{L^2(t,T;H^1(\Upsilon))}^2 \leq C(1+ \Vert f \Vert^2_{L^\infty((t,T)\times\Upsilon)})^2 \exp\big({T(1+ \Vert f \Vert^2_{L^\infty((t,T)\times\Upsilon))}}\big)\big(1+\Vert \dot{f}(t,\cdot) \Vert_{L^2(\Upsilon)}^2\big). 
\end{equation*}

Returning to the weak formulation of \eqref{eq:time diff quot eqn i}, the improved regularity $\dot{f} \in L^2(t,T;H^1(\Upsilon))$ for a.e.~$t \in (0,T)$ implies that we may rigorously take the limit as $h \to 0$ therein. We therefore rigorously differentiate the weak formulation with respect to the time variable and obtain \eqref{eq:eqn differentiated once in time} in the weak sense over the interval $(t,T)$ for a.e.~$t \in (0,T)$. 
\end{proof}

Next, we need to upgrade the integrability of $\dot{\rho}$; this is essential, as the term $f \dot{\rho} \e(\theta)$ forms the second drift term in \eqref{eq:eqn differentiated once in time} which, as per the statement of Lemma \ref{lem:interior local boundedness weak sol}, is required to belong to $L^q$ for $q>5$. To this end, we record the following lemma. 

\begin{lemma}[Improved Integrability for $\dot{\rho}$]\label{lem:L8 for dot rho}
There exists a decreasing positive function $\phi^1 : (0,T) \to (0,\infty)$ such that $\limsup_{t \to 0^+} \phi^1(t) = +\infty$ and, for a.e.~$t \in (0,T)$, there holds 
    \begin{equation*}
        \Vert \dot{\rho} \Vert_{L^8((t,T)\times\Omega)} \leq \phi^1(t). 
    \end{equation*}
\end{lemma}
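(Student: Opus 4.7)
The plan is to exploit the fact that $\dot\rho$ lives on the $2$-dimensional domain $\Omega$ rather than on the $3$-dimensional $\Upsilon$; in two dimensions, the Sobolev embedding $H^1(\Omega)\hookrightarrow L^p(\Omega)$ holds for every finite $p$. My starting point is the equation for $\dot\rho$, obtained by integrating \eqref{eq:eqn differentiated once in time} in $\theta$ and using the periodicity in the angle variable:
\begin{equation*}
    \partial_t \dot\rho + \dv\bigl((1-\rho)\dot\p - \dot\rho\, \p\bigr) = \Delta \dot\rho \qquad \text{on } (t,T)\times\Omega,
\end{equation*}
where $\p(t,x) := \int_0^{2\pi} f\,\e(\theta)\,d\theta$ and $\dot\p := \partial_t \p$. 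The crucial structural feature is the pointwise bound $|\p| \leq \rho \leq 1$.

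By Lemma \ref{lem:eqn for f dot} and Jensen's inequality, $\dot\rho,\dot\p\in L^\infty(t,T;L^2(\Omega))\cap L^2(t,T;H^1(\Omega))$ for a.e.\ $t\in(0,T)$; the Interpolation Lemma \ref{lem:dibenedetto classic} with $d=2$, $m=p=2$ then upgrades this to $\dot\rho,\dot\p\in L^4((t,T)\times\Omega)$. My next step is an $H^2$-type estimate for $\dot\rho$ patterned on Lemma \ref{lem:H2 bound for C1}: I mollify the equation, test against $-\Delta\dot\rho_\varepsilon$, control the commutator error, and pass to the limit. Expanding the drift and applying the Cauchy--Young inequality together with $|\rho|,|\p|\leq 1$, the right-hand side reduces (after absorption of $\tfrac12\|\Delta\dot\rho\|_{L^2}^2$) to controlling
\begin{equation*}
\|\nabla\rho\|_{L^4(\Omega)}^2\,\|\dot\p\|_{L^4(\Omega)}^2 + \|\nabla\dot\p\|_{L^2(\Omega)}^2 + \|\nabla\dot\rho\|_{L^2(\Omega)}^2 + \|\dot\rho\|_{L^4(\Omega)}^2\,\|\nabla\p\|_{L^4(\Omega)}^2.
\end{equation*}
Each of these four quantities is integrable in time on $(t,T)$ thanks to Corollary \ref{cor: gradients in L4 rho and p}, Lemma \ref{lem:eqn for f dot}, and the $L^4$ integrability just established. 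Using Markov's inequality to select $t$ at which $\nabla\dot\rho(t,\cdot)\in L^2(\Omega)$, and applying Gr\"onwall to absorb the third term into the differential inequality, I obtain $\dot\rho\in L^\infty(t,T;H^1(\Omega))$ with norm bounded by a decreasing function of $t$ that blows up as $t\to 0^+$.

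The conclusion then follows immediately from the $2$-dimensional Sobolev embedding $H^1(\Omega)\hookrightarrow L^8(\Omega)$: on the bounded interval $(t,T)$,
\begin{equation*}
    \|\dot\rho\|_{L^8((t,T)\times\Omega)}^8 \leq (T-t)\,\|\dot\rho\|_{L^\infty(t,T;L^8(\Omega))}^8 \leq C\,(T-t)\,\|\dot\rho\|_{L^\infty(t,T;H^1(\Omega))}^8 =: \phi^1(t)^8.
\end{equation*}

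The main obstacle will be closing the $H^2$-type estimate: the drift $\dv(\dot\rho\,\p)$ couples $\dot\rho$ to its own gradient, but the pointwise bound $|\p|\leq 1$ precisely allows this contribution to be handled through Gr\"onwall rather than via a small parameter or a subtle cancellation. Together with the $L^4$ integrability of $\nabla\rho,\nabla\p$ from Corollary \ref{cor: gradients in L4 rho and p}, this is what makes the higher-regularity bootstrap possible at this step.
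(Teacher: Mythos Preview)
Your proof is correct and follows essentially the same route as the paper: derive the equation for $\dot\rho$ on $\Omega$, use the Interpolation Lemma in dimension two to place $\dot\rho,\dot\p$ in $L^4((t,T)\times\Omega)$, and then perform an $H^2$-type estimate by testing against $-\Delta\dot\rho$ with the four drift contributions controlled exactly as you list. The only difference lies in the final step. The paper keeps both pieces of the $H^2$ estimate, namely $\nabla\dot\rho\in L^\infty(t,T;L^2(\Omega))\cap L^2(t,T;H^1(\Omega))$, interpolates once more to obtain $\nabla\dot\rho\in L^4((t,T)\times\Omega)$, and then applies the Interpolation Lemma \ref{lem:dibenedetto classic} with $d=2$, $m=2$, $p=4$ (so $q=8$) to conclude. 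You instead discard the $L^2(t,T;H^2)$ information and use only $\dot\rho\in L^\infty(t,T;H^1(\Omega))$ together with the two-dimensional Sobolev embedding $H^1(\Omega)\hookrightarrow L^8(\Omega)$. Your endgame is slightly more elementary and avoids a second pass through the Interpolation Lemma; the paper's version, on the other hand, records the stronger $L^2(t,T;H^2(\Omega))$ bound on $\dot\rho$, which is consistent with the pattern used later in the general inductive step of \S\ref{sec:smooth away from initial time}.
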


\begin{proof}

The proof is divided into several steps. 

\smallskip 
    \noindent 1. \textit{Equation for $\dot{\rho}$ and interpolated integrability}: By integrating the equation \eqref{eq:eqn differentiated once in time} with respect to the angle variable, we obtain that there holds in the weak sense 
    \begin{equation}\label{eq:eqn for dot rho}
        \partial_t \dot{\rho} + \dv \big( (1-\rho)\dot{\p} - \p \dot{\rho} \big) = \Delta \dot{\rho}. 
    \end{equation}
    Furthermore, the estimate \eqref{eq:dt f in L2H1} implies 
    \begin{equation}\label{eq:dt rho and p in L2H1}
   \begin{aligned}
       \big( \Vert \dot{\rho} &\Vert^2_{L^\infty(t,T;L^2(\Upsilon))} +\Vert \dot{\rho} \Vert^2_{L^2(t,T;H^1(\Omega))} \big) + \big( \Vert \dot{\p} \Vert^2_{L^\infty(t,T;L^2(\Omega))}+\Vert \dot{\p} \Vert^2_{L^2(t,T;H^1(\Omega))} \big) \\ 
       &\,\,\,\,\,\,\,\,\,\,\,\,\, \,\,\,\, \leq C\big(1+ \Vert f \Vert^2_{L^\infty((t,T)\times\Upsilon)}\big)^2\exp\big({T(1+ \Vert f \Vert^2_{L^\infty((t,T)\times\Upsilon))}}\big)\big(1+\Vert \dot{f}(t,\cdot) \Vert_{L^2(\Upsilon)}\big). 
   \end{aligned} 
\end{equation}
Note that, as per the proof of Corollary \ref{cor: gradients in L4 rho and p}, the Interpolation Lemma \ref{lem:dibenedetto classic} yields that, for a.e.~$t \in (0,T)$, 
\begin{equation}\label{eq:rho dot L4}
   \begin{aligned}
       \Vert \dot{\rho} &\Vert_{L^4((t,T)\times \Omega)} + \Vert \dot{\p} \Vert_{L^4((t,T)\times \Omega)} \\ 
       &\,\,\,\, \leq  \big( \Vert \dot{\rho} \Vert_{L^\infty(t,T;L^2(\Upsilon))} +\Vert \dot{\rho} \Vert_{L^2(t,T;H^1(\Omega))} \big) + \big( \Vert \dot{\p} \Vert_{L^\infty(t,T;L^2(\Omega))}+\Vert \dot{\p} \Vert_{L^2(t,T;H^1(\Omega))} \big). 
   \end{aligned} 
\end{equation}

\smallskip

\noindent 2. \textit{$H^2$-bound on $\dot{\rho}$}: The computation that follows is formal, as we do not know \emph{a priori} that $\Delta \dot{\rho}$ is square-integrable, however it is easily made rigorous by either the difference quotient technique used in the proof of Lemma \ref{lem:eqn for f dot} or the mollification method from the proof of Lemma \ref{lem:H2 bound for C1}. By testing equation \eqref{eq:eqn for dot rho} against $\Delta \dot{\rho}$, we obtain 
\begin{equation*}
    \begin{aligned}
        \frac{1}{2}\frac{\der}{\der t}\int_\Omega |\nabla \dot{\rho}|^2 \d \x + \int_\Omega |\Delta \dot{\rho}|^2 \d \x \leq & \int_\Omega |\Delta \dot{\rho}| (1-\rho)|\nabla \dot{\p}| \d \x + \int_\Omega |\Delta \dot{\rho}| |\dot{\p}||\nabla \rho | \d \x \\ 
        &+ \int_\Omega |\Delta \dot{\rho}| |\dot{\rho}| |\nabla \p| \d \x + \int_\Omega |\Delta \dot{\rho}| |\p| |\nabla \dot{\rho}| \d \x 
    \end{aligned}
\end{equation*}
Using Lemma \ref{lem:CZ periodic} and the H\"older inequality, as well as \eqref{eq:rho dot L4} and Corollary \ref{cor: gradients in L4 rho and p}, we obtain, for a.e.~$t \in (0,T)$, 
\begin{equation*}
   \begin{aligned}
       \Vert \nabla \dot{\rho} \Vert^2_{L^\infty(t,T;L^2(\Omega))} \!\! + \Vert \nabla \dot{\rho} \Vert^2_{L^2(t,T;H^1(\Omega))}\!\! \leq C\Big( &\Vert \nabla \dot{\rho}(t,\cdot) \Vert_{L^2(\Omega)}^2 \!\! + \Vert \nabla \dot{\rho} \Vert_{L^2((t,T)\times\Omega)}+ \Vert \nabla \dot{\p} \Vert_{L^2((t,T)\times\Omega)}^2  \\ 
       &+ \Vert \nabla \rho \Vert^4_{L^4((t,T)\times\Omega)} + \Vert \dot{\rho} \Vert^4_{L^4((t,T)\times\Omega)} \\ 
       &+ \Vert \nabla \p \Vert^4_{L^4((t,T)\times\Omega)} + \Vert \dot{\p} \Vert^4_{L^4((t,T)\times\Omega)} \Big) \\ 
       =:&(\phi^1(t))^2, 
   \end{aligned}
\end{equation*}
where the right-hand side is finite a.e.~and explodes as $t \to 0^+$. The Interpolation Lemma \ref{lem:dibenedetto classic} therefore yields $\Vert \nabla \dot{\rho} \Vert_{L^4((t,T)\times\Upsilon)} \leq C\phi^1(t)$ for a.e.~$t \in (0,T)$. Using also \eqref{eq:rho dot L4}, we deduce that, for a.e.~$t \in (0,T)$, 
\begin{equation*}
    \Vert \dot{\rho} \Vert_{L^\infty(t,T;L^2(\Omega))} + \Vert \dot{\rho} \Vert_{L^4(t,T;W^{1,4}(\Omega))} \leq C\phi^1(t), 
\end{equation*}
whence the Interpolation Lemma \ref{lem:dibenedetto classic} yields the result; up to a constant which we do not relabel. 
\end{proof}

We are now in a position to prove the first step of the regularity bootstrap. 

\begin{prop}[Boundedness of $\dot{f}$ away from Initial Time]\label{prop:first time deriv bounded by de giorgi}
    There exists a decreasing positive function $\psi^1:(0,T) \to (0,\infty)$ such that $\limsup_{t\to0^+}\psi^1(t) = +\infty$ and, for a.e.~$t \in (0,T)$, there holds $$\Vert \dot{f} \Vert_{L^\infty((t,T)\times\Upsilon)} \leq \psi^1(t).$$ 
\end{prop}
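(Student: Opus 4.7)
The plan is to apply the De Giorgi machinery developed in Section \ref{sec:first de giorgi lemma} to the equation \eqref{eq:eqn differentiated once in time} satisfied by $\dot f$. I would recast this equation in the generalised form \eqref{eq:rescaled eqn with V}, namely
\begin{equation*}
    \partial_t \dot f + \dv_{\bxi}(U \dot f + V) = \Delta_{\bxi} \dot f,
\end{equation*}
with effective drift $U = (1-\rho)(\e(\theta), 0)^{\top}$ and source $V = - f \dot\rho (\e(\theta),0)^{\top}$. The bound $0 \leq \rho \leq 1$ from \eqref{eq:rho bounds from bbes} immediately yields $|U| \leq 1$ pointwise. The crucial observation is that the integrability hypothesis $V \in L^q$ with $q > 5$ required by Lemma \ref{lem:interior local boundedness weak sol} is exactly what Proposition \ref{thm:global boundedness away from initial time} and Lemma \ref{lem:L8 for dot rho} together deliver: for a.e.\ $s \in (0,T)$,
\begin{equation*}
    \Vert V \Vert_{L^8((s,T)\times\Upsilon)} \leq (2\pi)^{1/8} \Vert f \Vert_{L^\infty((s,T)\times\Upsilon)} \Vert \dot\rho \Vert_{L^8((s,T)\times\Omega)} < \infty.
\end{equation*}

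Given $t \in (0,T)$ at which we seek the bound, I would first select an auxiliary time $s \in (0,t/2)$ at which the parabolic norm of $\dot f$ over $(s,T)\times\Upsilon$ (controlled by Lemma \ref{lem:eqn for f dot}) and $\Vert V \Vert_{L^8((s,T)\times\Upsilon)}$ are both finite; such $s$ exists for a.e.\ $t$ by Markov's inequality. Since the equation is linear in $\dot f$, both $\dot f$ and $-\dot f$ qualify as weak subsolutions in the sense of Definition \ref{def:weak subsol} on $(s,T) \times \Upsilon$, so it suffices to obtain an upper bound for $\dot f$ and then apply the same argument to $-\dot f$. I would then invoke the rescaling Lemma \ref{lem:rescaling i} with $\delta = \delta_*$ from Lemma \ref{lem:interior local boundedness weak sol} and apply the interior local boundedness Proposition \ref{prop:interior local boundedness proposition} on each parabolic subcylinder $Q_r(t_0,\bxi_0)$ with $t \leq t_0 < T$ and $0 < r < \min\{1, \sqrt{(t_0-s)/2}\}$.

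The final step is an exhaustion argument identical to that used in the proof of Proposition \ref{thm:global boundedness away from initial time}: setting $r_t := \tfrac{1}{2}\min\{1, \sqrt{(t-s)/2}\}$, the periodic strip $(t,T) \times \Upsilon$ is covered by $\mathcal{O}(r_t^{-5})$ subcylinders of radius $r_t$ strictly interior to the effective time-domain of $\dot f$. Summing these local bounds yields
\begin{equation*}
    \Vert \dot f \Vert_{L^\infty((t,T)\times\Upsilon)} \leq C (1 + r_t^{-13/4}) \bigl( \Vert \dot f \Vert_{\mathscr{P}((s,T)\times\Upsilon)} + \Vert V \Vert_{L^8((s,T)\times\Upsilon)} \bigr) =: \psi^1(t),
\end{equation*}
in which every factor on the right diverges as $t \to 0^+$.

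The main obstacle has essentially been surmounted upstream: closing the De Giorgi iteration requires $\dot\rho$ in $L^q$ for some $q > 5$, which \emph{a priori} cannot be inferred from $\dot f \in L^\infty(s,T;L^2) \cap L^2(s,T;H^1)$ alone, since direct integration in $\theta$ yields only the same exponent. Lemma \ref{lem:L8 for dot rho} extracts the needed integrability through a parabolic $H^2$-type estimate on the equation \eqref{eq:eqn for dot rho} for $\dot\rho$ followed by the interpolation gain afforded by $\Omega \subset \mathbb{R}^2$. With that integrability in hand, the rest of the argument is a clean transplant of the machinery of Section \ref{sec:first de giorgi lemma} to the differentiated equation.
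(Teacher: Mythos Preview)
Your proposal is correct and follows essentially the same route as the paper: recast \eqref{eq:eqn differentiated once in time} in the form \eqref{eq:rescaled eqn with V} with $|U|\leq 1$ and $V\in L^8$ away from the initial time via Lemma \ref{lem:L8 for dot rho}, then rerun the rescaling and exhaustion of \S\ref{sec:first de giorgi lemma}. Your explicit remark that one must apply the iteration to both $\dot f$ and $-\dot f$ (since $\dot f$ has no sign) is a point the paper leaves implicit.
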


\begin{proof}
    We apply the De Giorgi method to \eqref{eq:eqn differentiated once in time}, which we rewrite as: 
    \begin{equation}\label{eq:eqn differentiated once in time for de giorgi}
    \de_t \fd +  \dv_{\bxi} \big(  U^1\fd + V^1 \big)
= \Delta_{\bxi} \fd, 
\end{equation} 
where $$U^1 := (1-\rho)\left(\begin{matrix}\e(\theta)\\ 0 \end{matrix}\right), \qquad V^1 := f\dot{\rho}\left(\begin{matrix}\e(\theta)\\0\end{matrix}\right).$$ As $f \in L^\infty((t,T)\times\Upsilon)$ for a.e.~$t\in(0,T)$, we conclude from Lemma \ref{lem:L8 for dot rho} that the drift terms satisfy: 
\begin{equation*}
    |U^1| \leq 1, \qquad V^1 \in L^{8}((t,T)\times \Upsilon) \quad \text{for a.e.~}t \in (0,T). 
\end{equation*}

\smallskip

    \noindent 1. \textit{Rescaling to the unit subcylinder}: Let $\delta \in (0,1)$, $(t_0, \bxi_0) \in (0,T)\times\mathbb{R}^3$, and $r$ satisfy the constraint: 
    \begin{equation}\label{eq:r smallness de giorgi on dot f}
        0 < r < \min\big\{1,\sqrt{t_0/4}\big\}. 
    \end{equation}
    For the purposes of what follows, we write 
    \begin{equation}
        \Vert \dot{f} \Vert_{t_0}^2 := \Vert \dot{f} \Vert^2_{L^\infty(t_0/2,T;L^2(\Upsilon))} + \Vert \nabla_{\bxi}\dot{f} \Vert^2_{L^2((t_0/2,T)\times\Upsilon)}. 
    \end{equation}
    Let $(t,\bxi) \in Q_r(t_0,\bxi_0)$ and define 
    \begin{equation*}
        \ell ({r,\delta}) := \delta^{\frac{1}{2}} \frac{r^{\frac{3}{2}}}{1+\Vert \dot{f} \Vert_{t_0} + \Vert V^1 \Vert_{L^{8}((t_0/2,T)\times \Upsilon)}}, 
    \end{equation*}
    as well as the rescaled functions $\dot{f}_r,U^1_r,V^1_r:Q_1 \to \mathbb{R}$ by 
     \begin{equation}\label{eq:rescaling ii}
        \begin{aligned}
            &\dot{f}_r(\tau,\bzeta) := \ell \dot{f}(t+r^2\tau,\bxi + r\bzeta), \\ 
            &U^1_r(\tau,\bzeta) := r U^1(t+r^2 \tau, \bxi + r \bzeta), \\ 
            &V^1_r(\tau,\bzeta) := r \ell V^1(t+r^2 \tau, \bxi + r \bzeta). 
        \end{aligned}
    \end{equation}
    
    Then, by arguing as per the proof of Lemma \ref{lem:rescaling i}, we have that $\dot{f}_r \in C([-1,0];L^2(B_1))\cap L^2(-1,0;H^1(B_1))$ is non-negative, $\partial_\tau\dot{f}_r \in L^2(-1,0;(H^1)'(B_1))$, $$|U^1_r| \leq 1 \quad \text{a.e.~in } Q_1, \qquad \esssup_{\tau \in [-1,0]}\int_{B_1} |\dot{f}_r(\tau)|^2 \d\bzeta + \int_{Q_1} |\nabla_{\bzeta}\dot{f}_r|^2 \d\bzeta \d \tau \leq \delta$$ and as per \eqref{eq:rescaled eqn int form} there holds, in the sense of distributions, 
    \begin{equation}\label{eq:rescaled eqn ii}
           \partial_\tau \dot{f}_r + \dv_{\bzeta}( U^1_r \dot{f}_r + V^1_r) = \Delta_{\bzeta} \dot{f}_r \qquad \text{in } Q_1. 
       \end{equation}
       Similarly, arguing as per the proof of Lemma \ref{lem:rescaling i} and using the restrictions on $\delta,r$, we obtain 
       \begin{equation*}
          \begin{aligned}
              \Vert V^1_r \Vert^8_{L^8(Q_1)} &\leq \frac{\delta^4 r^{7}}{\Vert V^1\Vert_{L^8((t_0/2,T)\times\Upsilon)}^8}\int_{Q_r} |V^1(t+\tau' , \bxi + \bzeta')|^8 \d\bzeta' \d\tau' \\ 
              &\leq \frac{1}{\Vert V^1\Vert_{L^8((t_0/2,T)\times\Upsilon)}^8}\int_{t_0/2}^T \int_\Upsilon |V^1(\bxi,t)|^8 \d \bxi \d t, 
          \end{aligned} 
       \end{equation*}
       whence there holds 
\begin{equation}\label{eq:bound for rescaled V1}
     \Vert V^1_r \Vert_{L^{8}(Q_1)} \leq 1. 
\end{equation}

\smallskip
       
    \noindent 2. \textit{Boundedness and compact exhaustion}: We are in a position to apply Lemmas \ref{lem:energy ineq weak sol prior to local boundedness} and \ref{lem:interior local boundedness weak sol} to the equation \eqref{eq:rescaling ii}. In turn, we find that $(\dot{f}_r)_+ \leq 1/2$ inside the subcylinder $Q_{1/2}$, from which we deduce: there exists a positive constant $C$, independent of $r,(t,\bxi)$, such that 
    \begin{equation}
       \Vert \dot{f} \Vert_{L^\infty(Q_r(t,\bxi))} \leq C\delta_*^{-\frac{1}{2}}(1+r^{-\frac{3}{2}}) \big( 1 + \Vert \dot{f} \Vert_{t_0} + \Vert V_1 \Vert_{L^8((t_0/2,T)\times\Upsilon)} \big). 
    \end{equation}
    We notice once again that the smallness requirement on $r$ in \eqref{eq:r smallness de giorgi on dot f} only depends on $t$, whence the exhaustion argument in the proof of Proposition \ref{thm:global boundedness away from initial time} may be repeated. We deduce that there exists a constant $C$ independent of $t$ such that, for a.e.~$t\in(0,T)$, there holds 
    \begin{equation*}
        \Vert \dot{f} \Vert_{L^\infty((t,T)\times\Upsilon)} \leq C (1+t^{-\frac{13}{4}} )\big( 1 + \Vert \dot{f} \Vert_{t} + \Vert V_1 \Vert_{L^8((t/2,T)\times\Upsilon)} \big) =: \psi^1(t), 
    \end{equation*}
    where the exponent $-13/4$ is obtained as in the proof of Proposition \ref{thm:global boundedness away from initial time}. The proof is complete.
\end{proof}

We conclude this subsection by noting that, by replicating the proofs of Lemmas \ref{lem:H2 bound for C1} and \ref{lem:eqn for f dot}, it is easy to see that, for a.e.~$t \in (0,T)$, there holds $f^{(2)} \in L^\infty(t,T;L^2(\Upsilon)) \cap L^2(t,T;H^1(\Upsilon))$ with $\partial_t f^{(2)} \in L^2(t,T;(H^1)'(\Upsilon))$, which satisfies in the weak sense 
\begin{equation*}
    \partial_t f^{(2)} + \dv \big( (1-\rho)f^{(2)} - 2 f^{(1)} \rho^{(1)} - f \rho^{(2)} )\e(\theta)\big) = \Delta_{\bxi} f^{(2)}. 
\end{equation*}

\subsection{Proofs of Propositions \ref{prop:boundedness of all time derivs} and \ref{prop:sobolev of all time derivs}}\label{sec:smooth away from initial time}

Proposition \ref{prop:boundedness of all time derivs} follows immediately by applying the next lemma inductively. 

\begin{lemma}
    Let $f$ be a non-negative weak solution of \eqref{eq:main eqn}. Let $n \geq 2$ be an integer. Assume that for $j \in \{0,\dots,n-1\}$ there exist decreasing positive functions $\psi^j:(0,T)\to(0,\infty)$ such that $\limsup_{t\to0^+}\psi^j(t) = +\infty$ and, for a.e.~$t \in (0,T)$, there holds 
    \begin{equation}\label{eq:de giorgi up to n-1}
    \Vert f^{(j)} \Vert_{L^\infty((t,T)\times\Upsilon)} \leq \psi^j(t), 
    \end{equation}
    and there exists positive decreasing functions $\Phi^j:(0,T) \to (0,\infty)$ such that 
    \begin{equation}\label{eq:general jth deriv parabolic bound}
        \begin{aligned}
            \Vert f^{(j)} \Vert_{L^\infty(t,T;H^1(\Upsilon))} + \Vert f^{(j)} \Vert_{L^2(t,T;H^2(\Upsilon))} \leq \Phi^j(t) \quad \text{a.e.~}t\in(0,T). 
        \end{aligned}
    \end{equation}
    Assume also that there exists a positive decreasing function $\varphi^n:(0,T) \to (0,\infty)$ such that 
        \begin{equation}\label{eq:general nth deriv parabolic bound}
        \begin{aligned}
            \Vert f^{(n)} \Vert_{L^\infty(t,T;L^2(\Upsilon))} + \Vert f^{(n)} \Vert_{L^2(t,T;H^1(\Upsilon))} \leq \varphi^n(t) \quad \text{a.e.~}t\in(0,T), 
        \end{aligned}
    \end{equation}
    and that there holds in the weak sense, with $\partial_t f^{(n)} \in L^2(0,T;(H^1)'(\Upsilon))$, 
    \begin{equation}\label{eq:general nth time deriv eqn}
    \de_t \ff{n} +  \dv  \left[
    \left( (1- \rho) \ff{n}
    + \sum_{k=1}^{n-1} \binom{n}{k} \rr{n-k} \ff{k} 
    -\rr{n} f
    \right) \,\e(\theta)
    \right]
= \Delta_{\bxi} \ff{n}. 
\end{equation}
Then, there exists a decreasing positive function $\psi^{n}:(0,T)\to(0,\infty)$ with $\limsup_{t\to0^+}\psi^{n}(t)=+\infty$ and, for a.e.~$t\in(0,T)$, there holds 
\begin{equation}\label{eq:fn bounded de giorgi} 
\Vert f^{(n)} \Vert_{L^\infty((t,T)\times\Upsilon)} \leq \psi^{n}(t).
\end{equation}
Furthermore, $f^{(n+1)} \in L^\infty(t,T;L^2(\Upsilon)) \cap L^2(t,T;H^1(\Upsilon))$ and $\partial_t f^{(n+1)} \in L^2(0,T;(H^1)'(\Upsilon))$ satisfies 
    \begin{equation}\label{eq:general n+1th time deriv eqn}
    \de_t \ff{n+1}\!  +  \dv \!\! \left[
    \left( (1- \rho) \ff{n+1}
    + \sum_{k=1}^{n} \binom{n+1}{k} \rr{n+1-k} \ff{k} 
    -\rr{n+1} f
    \right) \! \e(\theta)
    \right]
\! \! = \Delta_{\bxi} \ff{n+1}, 
\end{equation}
and there exists a positive decreasing function $\varphi^{n+1}:(0,T) \to (0,\infty)$ such that 
\begin{equation}\label{eq:general n+1th parabolic bound}
\Vert f^{(n+1)} \Vert_{L^\infty(t,T;L^2(\Upsilon))} + \Vert f^{(n+1)} \Vert_{L^2(t,T;H^1(\Upsilon))} \leq \varphi^{n+1}(t) \quad \text{a.e.~}t\in(0,T), 
\end{equation}
as well as a positive decreasing function $\Phi_n:(0,T)\to(0,\infty)$ such that 
\begin{equation}\label{eq:H2 parabolic at n level}
      \Vert f^{(n)} \Vert_{L^\infty(t,T;H^1(\Upsilon))} + \Vert f^{(n)} \Vert_{L^2(t,T;H^2(\Upsilon))} \leq \Phi^n(t) \quad \text{a.e.~}t\in(0,T). 
\end{equation}
\end{lemma}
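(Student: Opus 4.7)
The proof follows the blueprint established in \S \ref{sec:C1 away from initial time} for the case $n=1$, which is essentially the base case of this induction. I would proceed in four main steps, with the plan being as follows.

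\textbf{Step 1: Improved integrability for $\rho^{(n)}$.} First, I would integrate the equation \eqref{eq:general nth time deriv eqn} with respect to $\theta$ to obtain an equation for $\rho^{(n)}$ on $\Omega \subset \mathbb{R}^2$, of the form
\begin{equation*}
\partial_t \rho^{(n)} + \dv\Big( (1-\rho)\p^{(n)} - \p \rho^{(n)} + \textstyle\sum_{k=1}^{n-1}\binom{n}{k}\rho^{(n-k)}\p^{(k)}\Big) = \Delta \rho^{(n)}.
\end{equation*}
Testing against $\Delta \rho^{(n)}$ (rigorously via mollification, as in the proof of Lemma \ref{lem:H2 bound for C1}), using the Calder\'on--Zygmund estimate of Lemma \ref{lem:CZ periodic}, the $L^\infty$ bounds on the lower-order terms provided by the inductive hypothesis \eqref{eq:de giorgi up to n-1}, and the parabolic bounds \eqref{eq:general jth deriv parabolic bound} (which give $L^4$ control of $\nabla \rho^{(j)}$ for $j \leq n-1$ via Lemma \ref{lem:dibenedetto classic}), yields a bound on $\rho^{(n)}$ in $L^\infty(t,T;H^1(\Omega)) \cap L^2(t,T;H^2(\Omega))$. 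The key gain comes from exploiting that $\Omega$ has lower dimension than $\Upsilon$: applying the Interpolation Lemma to the two-dimensional domain $\Omega$ promotes $\rho^{(n)}$ to $L^8((t,T)\times \Omega)$ (or any other exponent $q > 5$), mirroring Lemma \ref{lem:L8 for dot rho}.

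\textbf{Step 2: De Giorgi method for $f^{(n)}$.} I would rewrite \eqref{eq:general nth time deriv eqn} in the form $\partial_t f^{(n)} + \dv_{\bxi}(U^n f^{(n)} + V^n) = \Delta_{\bxi} f^{(n)}$ with drift $U^n = (1-\rho)(\e(\theta),0)^\top$ satisfying $|U^n|\le 1$, and
\begin{equation*}
V^n := \Big(\textstyle\sum_{k=1}^{n-1}\binom{n}{k}\rho^{(n-k)} f^{(k)} - \rho^{(n)} f \Big)\Big(\begin{matrix}\e(\theta)\\0\end{matrix}\Big).
\end{equation*}
Each term in the sum is bounded in $L^\infty$ by \eqref{eq:de giorgi up to n-1}; the delicate term $\rho^{(n)}f$ lies in $L^8$ thanks to Step~1 and the $L^\infty$ bound on $f$. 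Thus $V^n \in L^q$ with $q > 5$. The rescaling of Lemma \ref{lem:rescaling i} (applied to $f^{(n)}$ using the parabolic norm bound \eqref{eq:general nth deriv parabolic bound}), followed by Lemma \ref{lem:interior local boundedness weak sol}, gives local boundedness on subcylinders. The same exhaustion argument from the proof of Proposition \ref{thm:global boundedness away from initial time} then yields \eqref{eq:fn bounded de giorgi}, with explicit dependence $\psi^n(t) \sim (1+t^{-13/4})(1+\varphi^n(t/2)+\|V^n\|_{L^q})$.

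\textbf{Step 3: $H^2$-type estimate for $f^{(n)}$.} Once $f^{(n)} \in L^\infty$ away from the initial time, I would mollify \eqref{eq:general nth time deriv eqn} in $\bxi$, test against $\Delta_{\bxi} f^{(n)}_\varepsilon$, pass to the limit, and use Lemma \ref{lem:CZ periodic}, mimicking Lemma \ref{lem:H2 bound for C1}. The nonlinear terms on the right-hand side are controlled by $L^\infty$ bounds on $f^{(k)}$ for $k \leq n$ (now available from \eqref{eq:fn bounded de giorgi}) together with Corollary \ref{cor: gradients in L4 rho and p} applied at each level, yielding \eqref{eq:H2 parabolic at n level}.

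\textbf{Step 4: Equation and parabolic bound for $f^{(n+1)}$.} Finally, I would carry out the difference quotient argument of Lemma \ref{lem:eqn for f dot} on \eqref{eq:general nth time deriv eqn}: let $D_h f^{(n)}$ denote the discrete time derivative, write its equation, test against itself, and apply Gr\"onwall. Passing $h \to 0$ justifies time differentiation and produces \eqref{eq:general n+1th time deriv eqn} via the Leibniz rule, together with the bound \eqref{eq:general n+1th parabolic bound}; the extra source terms are controlled by the $L^\infty$ bounds just obtained and the existing parabolic bounds on lower-order derivatives.

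The main obstacle is Step~1: we must extract enough integrability from $\rho^{(n)}$ to serve as the source $V^n$ in the De Giorgi machine, which \emph{a priori} only inherits the $L^2_t H^1$ regularity from $f^{(n)}$. The resolution relies crucially on the dimensional gap between $\Omega$ and $\Upsilon$, exactly as in the $n=1$ case. The rest of the argument is an essentially mechanical repetition of the scheme of \S \ref{sec:C1 away from initial time}, with bookkeeping of the combinatorial Leibniz expansion and a careful accounting of the blow-up rates $\psi^n(t),\Phi^n(t),\varphi^{n+1}(t)$ as $t \to 0^+$, none of which affects the argument since we only need qualitative finiteness a.e.
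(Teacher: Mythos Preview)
Your proposal is correct and follows essentially the same four-step approach as the paper: improved $L^8$ integrability for $\rho^{(n)}$ via the dimensional gap, De Giorgi's method for $f^{(n)}$ with drift $V^n\in L^q$ ($q>5$), the $H^2$-type estimate for $f^{(n)}$, and finally differentiation in time to obtain the equation and parabolic bound for $f^{(n+1)}$. The identification of Step~1 as the crux, and of the remaining steps as mechanical adaptations of \S\ref{sec:C1 away from initial time}, matches the paper's treatment precisely.
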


\begin{proof}

Throughout this proof, we denote the binomial constants by $\binom{n}{k} =: C^n_k.$

\smallskip 
\noindent 1. \textit{Improved integrability for $\rho^{(n)}$}: By integrating \eqref{eq:general nth time deriv eqn} with respect to the angle variable, we see that there holds in the weak sense 
\begin{equation*}
    \partial_t \rho^{(n)} + \dv\Big( (1-\rho) \p^{(n)} + \sum_{k=1}^{n-1} C^n_k \rho^{(n-k)} \p^{(k)} - \p\rho^{(n)}  \Big) = \Delta \rho^{(n)}. 
\end{equation*}
The Interpolation Lemma \ref{lem:dibenedetto classic} and estimates \eqref{eq:general jth deriv parabolic bound} imply that, for all $j \in \{0,\dots,n-1\}$, there holds 
\begin{equation}\label{eq:L4 for multiple time derivs}
   \begin{aligned} 
   \Vert \rho^{(j)} & \Vert_{L^4((t,T)\times\Omega)} + \Vert \p^{(j)} \Vert_{L^4((t,T)\times\Omega)} \\ 
    &+\Vert \nabla \rho^{(j)} \Vert_{L^4((t,T)\times\Omega)} + \Vert \nabla \p^{(j)} \Vert_{L^4((t,T)\times\Omega)} \leq C\Phi^j(t) \quad \text{a.e.~}t\in(0,T), 
   \end{aligned}
\end{equation}
as well as $\Vert \rho^{(n)} \Vert_{L^4((t,T)\times\Omega)} + \Vert \p^{(n)} \Vert_{L^4((t,T)\times\Omega)} \leq C\varphi^n(t)$. 

We replicate the second derivative estimate from the proof of Lemma \ref{lem:L8 for dot rho}; we write down formal estimates for clarity of presentation, which can easily be made rigorous by means of difference quotients or mollification. We test \eqref{eq:general nth time deriv eqn} against $\Delta_{\bxi} \rho^{(n)}$ and obtain 
\begin{equation*}
    \begin{aligned}
        \frac{1}{2}\frac{\der}{\der t} &\int_\Omega |\nabla \rho^{(n)}|^2 \d x + \int_\Omega |\Delta \rho^{(n)}|^2 \d x \\ 
        \leq & \int_\Omega |\Delta \rho^{(n)}| |\nabla \p^{(n)}| \d x + \int_\Omega |\Delta \rho^{(n)}| |\p^{(n)}| |\nabla \rho| \d x + C\sum_{k=1}^{n-1}\int_\Omega |\Delta \rho^{(n)}| |\rho^{(n-k)}| |\nabla \p^{(k)}| \d x \\ 
        &+ C\sum_{k=1}^{n-1}\int_\Omega |\Delta \rho^{(n)}| |\p^{(k)}| |\nabla \rho^{(n-k)}| \d x + \int_\Omega |\Delta \rho^{(n)}| |\nabla \rho^{(n)}| \d x + \int_\Omega |\Delta \rho^{(n)}| |\nabla \p | |\rho^{(n)}| \d x. 
    \end{aligned}
\end{equation*}
By applying the H\"older and Young inequalities, we get 
\begin{equation*}
    \begin{aligned}
        \Vert \nabla&\rho^{(n)} \Vert^2_{L^\infty(t,T;L^2(\Omega))} + \Vert \Delta \rho^{(n)} \Vert^2_{L^2((t,T)\times\Omega)} \\ 
        \leq & \, C \bigg( \Vert\nabla\rho^{(n)}(t,\cdot) \Vert^2_{L^2(\Omega)} + \Vert \nabla \rho^{(n)} \Vert^2_{L^2((t,T)\times\Omega)} + \Vert \nabla \p^{(n)}\Vert_{L^2((t,T)\times\Omega)}^2 + \Vert \rho^{(n)} \Vert^4_{L^4((t,T)\times\Omega)}\\ 
        &+ \Vert\p^{(n)}\Vert^4_{L^4((t,T)\times\Omega)} + \sum_{k=0}^{n-1}\Big( \Vert \rho^{(k)} \Vert_{L^4(t,T;W^{1,4}(\Omega))}^4 + \Vert \p^{(k)} \Vert_{L^4(t,T;W^{1,4}(\Omega))}^4 \Big) \bigg) \\ 
        \leq& \, C\Big( \Vert\nabla\rho^{(n)}(t,\cdot) \Vert^2_{L^2(\Omega)} +  (\varphi^n(t))^2 + \sum_{k=0}^{n-1} (\Phi^k(t))^4 \Big) \\ 
        =:& \, \hat{\Psi}^n(t); 
    \end{aligned}
\end{equation*}
the above is bounded by virtue of the boundedness of $\varphi^n$ (see \eqref{eq:general nth deriv parabolic bound}), $\{\Phi^k\}_{k=0}^{n-1}$ and $\Vert\nabla\rho^{(n)}(t,\cdot) \Vert_{L^2(\Omega)}$ being finite for a.e.~$t\in(0,T)$ using Markov's inequality and \eqref{eq:general nth deriv parabolic bound}.

It follows from Lemma \ref{lem:CZ periodic} and the Interpolation Lemma \ref{lem:dibenedetto classic} that $\Vert \nabla \rho^{(n)} \Vert_{L^4((t,T)\times\Omega)} \leq C\hat{\Psi}^n(t)$. Arguing as per the proof of Lemma \ref{lem:L8 for dot rho}, we deduce that there exists a positive decreasing function $\phi^n$ such that 
\begin{equation*}
    \Vert \rho^{(n)} \Vert_{L^\infty(t,T;L^2(\Omega))} + \Vert \rho^{(n)} \Vert_{L^4(t,T;W^{1,4}(\Omega))} \leq \phi^n(t), 
\end{equation*}
whence the Interpolation Lemma yields 
\begin{equation}\label{eq:L8 for nth time deriv rho}
    \Vert \rho^{(n)} \Vert_{L^8((t,T)\times\Omega)} \leq C \phi^n(t). 
\end{equation}

\smallskip

\noindent 2. \textit{De Giorgi method for $f^{(n+1)}$}: We define the drift term 
\begin{equation*}
    V^n := \sum_{k=1}^{n-1} C^n_k \rho^{(n-k)} \p^{(k)} - \p\rho^{(n)}. 
\end{equation*}
We deduce directly from \eqref{eq:de giorgi up to n-1}, \eqref{eq:L4 for multiple time derivs}, and \eqref{eq:L8 for nth time deriv rho} that there holds 
\begin{equation*}
    V^n \in L^8((t,T)\times\Omega) \quad \text{a.e.~}t \in (0,T). 
\end{equation*}
By following the proof of Proposition \ref{prop:first time deriv bounded by de giorgi} to the letter, we apply the De Giorgi method to the equation \eqref{eq:general nth time deriv eqn} and obtain the estimate \eqref{eq:fn bounded de giorgi}. 

\smallskip

\noindent 3. \textit{Second derivative estimate}:  We replicate the argument of Lemma \ref{lem:H2 bound for C1} for the equation \eqref{eq:general nth time deriv eqn}; again the estimates are formal for clarity of presentation, and can easily be made rigorous by means of difference quotients or mollification. We test \eqref{eq:general nth time deriv eqn} against $\Delta_{\bxi} f^{(n)}$ and obtain 
\begin{equation*}
    \begin{aligned}
        \frac{1}{2}\frac{\der}{\der t}\int_\Upsilon | \nabla_{\bxi} & f^{(n)}|^2 \d \bxi + \int_\Upsilon |\Delta_{\bxi} f^{(n)}|^2 \d \bxi \\ 
        \leq & \int_\Upsilon |\Delta_{\bxi} f^{(n)}| |\nabla_{\bxi} f^{(n)}|  \d \bxi + \int_\Upsilon |\Delta_{\bxi}f^{(n)}| |f^{(n)}| |\nabla \rho| \d \bxi \\ 
        &+ C\sum_{k=1}^{n-1} \int_{\Upsilon} |\Delta_{\bxi} f^{(n)}| |\rho^{(n-k)}| |\nabla f^{(k)}| \d \bxi + C\sum_{k=1}^{n-1} \int_{\Upsilon} |\Delta_{\bxi} f^{(n)}| |\nabla \rho^{(n-k)}| |f^{(k)}| \d \bxi \\ 
        &+ \int_\Upsilon |\Delta_{\bxi} f^{(n)}| |\rho^{(n)}| |\nabla f| \d \bxi + \int_\Upsilon |\Delta_{\bxi} f^{(n)}| |\nabla \rho^{(n)}| |f| \d \bxi. 
    \end{aligned}
\end{equation*}
Using the Young and Jensen inequalities as well as the assumption \eqref{eq:de giorgi up to n-1}, the above implies, for a.e.~$t \in (0,T)$, 
\begin{equation*}
    \begin{aligned}
       \frac{\der}{\der t} & \Vert \nabla_{\bxi} f^{(n)}(t)\Vert_{L^2(\Upsilon)}^2 + \Vert \Delta_{\bxi} f^{(n)}(t)\Vert_{L^2(\Upsilon)}^2 \\ 
        &\leq C\bigg( \!\Vert \nabla_{\bxi}f^{(n)} \Vert^2_{L^2(\Upsilon)}\! + \Vert f^{(n)} \Vert_{L^\infty((t,T)\times\Upsilon)}^2 \Vert \nabla f\Vert^2_{L^2(\Upsilon)} \!\! +\!\! \sum_{k=1}^{n-1}\Vert f^{(k)} \Vert_{L^\infty((t,T)\times\Upsilon)}^2 \Vert \nabla f^{(n-k)} \Vert^2_{L^2(\Upsilon)}  \! \bigg) \\ 
        &\leq C\bigg( \Vert \nabla_{\bxi}f^{(n)} (t)\Vert^2_{L^2(\Upsilon)} + (\psi^n(t))^2 (\Phi^0(t))^2 + \sum_{k=1}^n (\psi^k(t))^2 (\Phi^{n-k}(t))^2 \bigg), 
    \end{aligned}
\end{equation*}
and an application of Gr\"onwall's Lemma yields the estimate \eqref{eq:H2 parabolic at n level}. Returning to the equation, we deduce, as per the estimate \eqref{eq:using the equation} that $\Vert \partial_t f^{(n)} \Vert_{L^2((t,T)\times\Upsilon)}$ is finite for a.e.~$t\in(0,T)$. 

\smallskip

\noindent 4. \textit{Equation differentiated in time}: We now differentiate \eqref{eq:general nth time deriv eqn} in time; the new coefficients $\{C^{n+1}_k \}_{k=1}^n$ are determined by the product rule. For clarity of presentation, we justify this step by performing formally the classical parabolic estimate on \eqref{eq:general n+1th time deriv eqn}, which can be done rigorously by means of difference quotients as per the proof of Lemma \ref{lem:eqn for f dot}. We test \eqref{eq:general n+1th time deriv eqn} with $f^{(n+1)}$ and obtain 
\begin{equation*}
    \begin{aligned}
        \frac{1}{2}&\frac{\der}{\der t} \int_\Upsilon |f^{(n+1)}|^2 \d \bxi + \int_\Upsilon |\nabla_{\bxi} f^{(n+1)}|^2 \d \bxi \\ 
        &\leq \int_\Upsilon |\nabla f^{(n+1)}||f^{(n+1)} | \d \bxi + C\sum_{k=1}^\infty |\nabla f^{(n+1)}| |\rho^{(n+1-k)}||f^{(k)}| \d \bxi + \int_\Upsilon |\nabla f^{(n+1)}| |\rho^{(n+1)}| |f| \d \bxi, 
    \end{aligned}
\end{equation*}
    whence the Young and Jensen inequalities yield, using also \eqref{eq:de giorgi up to n-1}, 
    \begin{equation*}
        \begin{aligned}
       \frac{\der}{\der t} &\Vert f^{(n+1)}(t)\Vert_{L^2(\Upsilon)}^2 \Vert \nabla_{\bxi} f^{(n+1)}(t)\Vert^2_{L^2(\Upsilon)} \\ 
        \leq & \, C\Big((1+ \Vert f \Vert_{L^\infty((t,T)\times\Upsilon)})\Vert f^{(n+1)}(t) \Vert^2_{L^2(\Upsilon)} + \sum_{k=1}^{n} \Vert f^{(n+1-k)}\Vert_{L^\infty((t,T)\times\Upsilon)} \Vert f^{(k)} \Vert_{L^\infty((t,T)\times\Upsilon)} \Big) \\ 
        \leq & \, C\Big( (1+\psi^0(t)) \Vert f^{(n+1)}(t) \Vert^2_{L^2(\Upsilon)} +  \sum_{k=1}^{n} \psi^{n+1-k}(t) \psi^k(t)\Big), 
    \end{aligned}
\end{equation*}
whence the estimate \eqref{eq:general n+1th parabolic bound} follows from Gr\"onwall's Lemma and the weak formulation \eqref{eq:general n+1th time deriv eqn} is justified. 
\end{proof}

\begin{proof}[Proof of Proposition \ref{prop:boundedness of all time derivs}]

    As previously mentioned, Proposition \ref{prop:boundedness of all time derivs} follows from the previous lemma by applying it inductively.     
\end{proof}

We shall now employ Proposition \ref{prop:boundedness of all time derivs} and the time-differentiated equations to prove Proposition \ref{prop:sobolev of all time derivs}.

\begin{proof}[Proof of Proposition \ref{prop:sobolev of all time derivs}]
    Estimate \eqref{eq:H2 parabolic at n level}, Jensen's inequality, and the Interpolation Lemma imply that for a.e.~$t\in(0,T)$ 
    \begin{equation*}
        \Vert \nabla_{\bxi} f^{(n)} \Vert_{L^{\frac{10}{3}}((t,T)\times\Upsilon)} + \Vert \nabla \rho^{(n)} \Vert_{L^4((t,T)\times\Omega)} \leq \Phi^n(t), 
    \end{equation*}
  and the above holds for all $n$. It then follows from the equation \eqref{eq:general nth time deriv eqn} and Jensen's inequality that 
    \begin{equation*}
        \Vert \Delta_{\bxi} f^{(n)} \Vert_{L^{\frac{10}{3}}((t,T)\times\Upsilon)} \leq C\Big( \Vert f^{(n+1)} \Vert_{L^\infty((t,T)\times\Upsilon)} + \sum_{k=0}^{n} \Vert f^{(k)} \Vert_{L^\infty((t,T)\times\Upsilon)}\Vert \nabla f^{(n-k)} \Vert_{L^{\frac{10}{3}}((t,T)\times\Upsilon)}\Big). 
    \end{equation*}
    The above and Lemma \ref{lem:CZ periodic} imply an equivalent bound on the full Hessian $\Vert \nabla^2_{\bxi} f^{(n)} \Vert_{L^{10/3}((t,T)\times\Upsilon)}$. Meanwhile, $\partial^2_t f^{(n)} = f^{(n+2)} \in L^\infty((t,T)\times\Upsilon)$, and the result follows from Minkowski's inequality. 
\end{proof}

\section{Uniqueness and Regularity of Very Weak Solutions}\label{sec:very weak}

We begin this section with a uniqueness result \emph{\`a la} Michel Pierre for very weak solutions of \eqref{eq:main eqn}; this result will then be used show that very weak solutions coincide with weak solutions away from the initial time, whence they are endowed with the same regularity properties derived in \S \ref{sec:higher reg}.

\begin{lemma}[Uniqueness of Very Weak Solutions away from Initial Time]\label{lem:very weak uniqueness}
    Let $f$ and $g$ be very weak solutions of \eqref{eq:main eqn}. Suppose that, for some $t_0 \in (0,T)$, there holds $f(t_0,\cdot)=g(t_0,\cdot)$ in $L^2(\Upsilon)$. Then, $f = g$ in $L^2((t_0,T)\times\Upsilon)$. 
\end{lemma}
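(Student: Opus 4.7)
The plan is to employ a duality argument \emph{\`a la} Michel Pierre. Set $w := f-g$ and $\sigma := \int_0^{2\pi} w \, d\theta = \rho_f - \rho_g$, so $w(t_0, \cdot) = 0$ in $L^2(\Upsilon)$. Subtracting the very weak formulations of $f$ and $g$ shows that $w$ satisfies the \emph{linear} equation
\begin{equation*}
    \partial_t w + \dv \big( (1-\rho_f) w \, \e(\theta) - \sigma g \, \e(\theta) \big) = \Delta_{\bxi} w
\end{equation*}
in the very weak sense on $(t_0, T) \times \Upsilon$, with coefficients involving $\rho_f$ and $g$ of limited regularity but satisfying $0 \leq \rho_f, \rho_g \leq 1$.

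For an arbitrary $\phi \in C^\infty_c((t_0, T) \times \Upsilon)$ (extended periodically), I would seek an admissible test function $\varphi \in X$ with $\varphi(T, \cdot) = 0$ solving the backward dual problem
\begin{equation*}
    -\partial_t \varphi - \Delta_{\bxi} \varphi - (1-\rho_f) \e(\theta) \cdot \nabla \varphi + G_{g, \varphi} = \phi,
\end{equation*}
where, via Fubini applied to the cross term $\int \sigma g \, \e(\theta) \cdot \nabla \varphi \, d \bxi$, the non-local term reads
$$G_{g, \varphi}(t, x) := \int_0^{2\pi} g(t, x, \theta) \, \e(\theta) \cdot \nabla \varphi(t, x, \theta) \, d \theta.$$
To construct $\varphi$, I would replace $(\rho_f, g)$ by smooth non-negative approximations $(\rho_{f,n}, g_n)$ with $\rho_{f,n}, \rho_{g_n} \in [0,1]$ and $g_n \to g$ in $L^2(\Upsilon_T)$, so that standard linear parabolic theory furnishes smooth solutions $\varphi_n$ of the regularized dual equation. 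Uniform estimates for $\varphi_n$ in $L^\infty(t_0, T; H^1_{\bxi}) \cap L^2(t_0, T; H^2_{\bxi})$ would be extracted by testing against $-\Delta_{\bxi} \varphi_n$ and invoking Lemma \ref{lem:CZ periodic}; the non-local term is absorbed using the Jensen-type estimate
$$|G_{g_n, \varphi_n}|^2 \leq \rho_{g_n} \int_0^{2\pi} g_n |\nabla \varphi_n|^2 \, d\theta \leq \int_0^{2\pi} g_n |\nabla \varphi_n|^2 \, d\theta,$$
combined with a Sobolev embedding tailored to $\Omega \subset \mathbb{R}^2$, closing the estimate via Gr\"onwall. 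Aubin--Lions compactness then delivers a limit $\varphi \in X$ solving the dual problem in a sense sufficient to be inserted in the very weak formulation.

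Once $\varphi$ has been constructed as an admissible test function, inserting it into the very weak formulation of $w$ and integrating by parts in $\bxi$ (one derivative moved on the $x$-diffusion and drift, two on the $\theta$-diffusion) and in time---the boundary contributions vanishing thanks to $w(t_0) = 0$ and $\varphi(T) = 0$---together with the Fubini rearrangement of the non-local cross term, yields $\int_{t_0}^T \int_\Upsilon w \, \phi \, d \bxi \, d t = 0$. Density of $C^\infty_c((t_0, T) \times \Upsilon)$ in $L^2((t_0, T) \times \Upsilon)$ then forces $w \equiv 0$.

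The main obstacle lies in the construction of the dual solution $\varphi$: the non-local term $G_{g, \varphi}$ couples $g$ (only $L^2$ in $\theta$) with $\nabla \varphi$, so closing the energy estimate depends critically on exploiting $\rho_g \leq 1$ via the above Jensen-type inequality and the lower dimensionality of $\Omega$. A subtler point is passing to the limit in the non-local term, which demands \emph{strong} convergence of $\nabla \varphi_n$; this is provided by an Aubin--Lions argument combining the $L^2(t_0,T; H^2_{\bxi})$ bound with a corresponding $L^2$ bound on $\partial_t \varphi_n$ obtained by returning to the dual equation.
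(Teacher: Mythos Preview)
Your proposal is correct and follows essentially the same duality strategy as the paper: both derive the linear equation for $w=f-g$, perform the Fubini rearrangement of the cross term to obtain the non-local coefficient $G_{g,\varphi}$, construct a solution of the corresponding backward dual problem with terminal datum zero, and conclude $\int w\,\phi = 0$ for arbitrary $\phi$. The only difference is cosmetic: the paper simply asserts that ``standard linear theory'' plus an $H^2$ argument \emph{akin to Lemma~\ref{lem:H2 bound for C1}} yields the dual solution in $L^2(0,T;H^2_\per(\Upsilon))$, whereas you spell out an approximation-plus-compactness construction and correctly flag the non-local term (with $g$ merely in $L^2$) as the step requiring care---your Jensen-type bound exploiting $\rho_g\le 1$ is precisely the kind of estimate needed to close it.
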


Before we proceed to the proof of this result, we remark that a standard argument shows that the weak and very weak formulations of Definition \ref{def:concept of solution} can be rewritten without the duality product of the time-derivative, provided we employ test functions that vanish along $\{t=0\}$ and $\{t=T\}$. Indeed, provided $\varphi \in C^\infty(\Upsilon_T)$ is such that $\varphi(0,\cdot)=\varphi(T,\cdot)=0$, then there holds 
\begin{equation*}
    \langle \partial_t f , \varphi \rangle = - \int_{\Upsilon_T} f \partial_t \varphi \d \bxi \d t. 
\end{equation*}
We use this fact in the proof that follows. 

\begin{proof}
Recall from \cite[Proof of Theorem 2.6]{bbes} that $f,g \in L^2(\Upsilon_T)$ and the integrals $$ \rho_f(t,x) = \int_0^{2\pi} f(t,x,\theta) \d \theta, \qquad \rho_g(t,x) = \int_0^{2\pi} g(t,x,\theta) \d \theta$$ are well-defined and $0\leq \rho_f,\rho_g \leq 1$ a.e.~in $\Omega_T$. Without loss of generality, we translate the problem in time $t \mapsto t-t_0$ such that we may assume $t_0=0$. 

    Let $w := f - g \in L^2(\Upsilon_T)$ and note that the assumptions of the lemma imply $w(0,\cdot)=0$. Hence, for any smooth function $\psi$ periodic in $x,\theta$ satisfying $\psi(T,\cdot) =0$, there holds 
    \begin{equation}\label{eq:test difference for uniqueness}
       \begin{aligned}
            \int_{\Upsilon_T}  w \partial_t \psi \d \bxi \d t = &\int_{\Upsilon_T} \nabla w \cdot \nabla \psi \d \bxi \d t - \int_{\Upsilon_T} w \partial^2_\theta \psi \d\bxi \d t -\int_{\Upsilon_T} w \e(\theta) \cdot \nabla \psi \d \bxi \d t \\ 
           & + \int_{\Upsilon_T}  \rho_f w \e(\theta)  \cdot  \nabla \psi \d \bxi \d t + \int_{\Upsilon_T}w \bigg(\int_0^{2\pi} g \e(\theta') \cdot \nabla \psi \d \theta'\bigg) \d \bxi \d t, 
       \end{aligned}
    \end{equation}
where we used the Fubini--Tonelli Theorem to rewrite the final term on the right-hand side, \textit{i.e.}, 
\begin{equation*}
   \begin{aligned} \int_{\Upsilon_T} \!\!\!\bigg( \int_0^{2\pi} \!\!\! w(t,x,\theta') \d \theta' \bigg) g(t,x,\theta) & \e(\theta) \cdot \nabla \psi(t,x,\theta) \d \bxi \d t \\ 
   &= \int_{\Upsilon_T}\!\!\!\!\! w \bigg(\int_0^{2\pi}\!\!\!\! g(t,x,\theta') \e(\theta') \cdot \! \nabla \psi(t,x,\theta') \d \theta' \bigg) \d \bxi \d t. 
\end{aligned}
\end{equation*}
A standard density argument implies that \eqref{eq:test difference for uniqueness} holds for all $\psi \in L^2(0,T;H^2(\Upsilon))$ periodic in $x,\theta$ satisfying the final-time condition $\psi(T,\cdot)=0$. 

In turn, for $\zeta \in C^\infty_c(\Upsilon_T)$ an arbitrary test function periodic in $x,\theta$, we define $\phi$ to be a strong solution of the following strongly parabolic linear (formal) dual equation: 
    \begin{equation}\label{eq:dual eqn}
        \left\lbrace\begin{aligned}
            &\partial_t \phi - \Delta_{\bxi} \phi - (1-\rho_f) \e(\theta) \cdot \nabla \phi + \int_0^{2\pi} g \e(\theta')\cdot \nabla \phi \d \theta' = -\zeta, \\ 
            & \phi(0,\cdot) = 0, 
        \end{aligned}\right. 
    \end{equation}
with periodic boundary conditions in $x,\theta$, and consider $\psi(t,\bxi) := \phi(T-t,\bxi)$, which satisfies 
    \begin{equation}\label{eq:dual eqn actual one we use}
        \left\lbrace\begin{aligned}
            &\partial_t \psi +\Delta_{\bxi} \psi + (1-\rho_f) \e(\theta) \cdot \nabla \psi - \int_0^{2\pi} g \e(\theta')\cdot \nabla \psi \d \theta' = \zeta, \\ 
            &\psi(T,\cdot) = 0. 
        \end{aligned}\right. 
    \end{equation}
    The standard linear theory implies the existence and uniqueness of $\phi \in C([0,T];L^2_\per(\Upsilon)) \cap L^2(0,T;H^1_\per(\Upsilon))$, and regularity arguments akin to those used in the proof of Lemma \ref{lem:H2 bound for C1} imply $\phi \in L^2(0,T;H^2_\per(\Upsilon))$. Hence $\psi$ is an admissible test function to insert into \eqref{eq:test difference for uniqueness}, whence integrating \eqref{eq:dual eqn actual one we use} against $w$ and substituting for the term $\int_{\Upsilon_T} w \partial_t \psi \d \bxi \d t$ using \eqref{eq:test difference for uniqueness} yields 
    \begin{equation*}
        \int_{\Upsilon_T} w \zeta \d\bxi \d t = 0. 
    \end{equation*}
This procedure may be repeated for all $\zeta \in C^\infty_c(\Upsilon_T)$, whence the previous equality holds for all such $\zeta $. It follows that $w \equiv 0$ a.e.~in $\Upsilon_T$, which concludes the proof of the lemma. 
\end{proof}

In turn, we are ready to give the proofs of the main results for very weak solutions. 

\begin{proof}[Proof of Theorems \ref{thm:weak strong uniqueness} and \ref{thm:reg very weak}]
     Let $f$ be a very weak solution. There holds $f \in L^2(\Upsilon_T)$, whence, by Markov's inequality, for a.e.~$t_0 \in (0,T)$ there holds $f(t_0,\cdot) \in L^2(\Upsilon)$. It follows that $f(t_0,\cdot)$ may be used as initial data to produce a weak solution for a.e.~$t \in (t_0,T)$, which we denote by $g$. It follows from Lemma \ref{lem:very weak uniqueness} that $f \equiv g$ in $(t_0,T)$ in the sense of $L^2((t_0,T)\times\Upsilon)$, for a.e.~$t_0 \in (0,T)$; in other words, we may select $g$ as the \emph{precise representative} of $f$ in $(t_0,T)\times \Upsilon$. In turn, since $f$ is regular enough to satisfy the weak formulation of \eqref{eq:main eqn}, the conclusion of Theorem \ref{thm:weak strong uniqueness} follows. Furthermore, by Theorem \ref{thm:smooth away from initial}, there holds $g \in C^\infty((t,T)\times\mathbb{R}^3)$ a.e.~$t \in (0,T)$, whence $f$ satisfies the assertion of Theorem \ref{thm:reg very weak}. 
\end{proof}

\section{Stationary States}\label{sec:harris}

Our objective in this final section is to prove regularity results for the stationary equation \eqref{eq:equilibrium eqn} associated to \eqref{eq:main eqn} and the convergence of the time-dependent solutions to stationary solutions. We begin with the former, in \S \ref{subsec:reg stationary}, and then consider the latter in \S \ref{subsec:convergence}.

\subsection{Regularity of Stationary Solutions}\label{subsec:reg stationary}

Note that the existence of stationary solutions is trivial; by formally rewriting the drift term $\dv((1-\rho)f \e(\theta)) = \e(\theta) \cdot \nabla((1-\rho)f)$, all constant solutions $f_\infty \in \mathbb{R}$ satisfy the stationary equation \eqref{eq:equilibrium eqn}, \textit{i.e.}, 
\begin{equation*}
    \Pe \, \dv\big( (1-\rho_\infty)f_\infty \e(\theta) \big) = D_e \Delta f_\infty + \partial^2_\theta f_\infty 
\end{equation*}
with periodic boundary conditions.

We now prove Theorem \ref{thm:smooth stationary states}. To do so, we apply a bootstrapping argument similar to the one in \S \ref{sec:higher reg} and thereby show smoothness of the stationary states. Without loss of generality we restrict this part of our analysis to the case $\Pe=D_e=1$.

    \begin{proof}[Proof of Theorem \ref{thm:smooth stationary states}] 

    The proof is divided into several steps. 

    \smallskip 
    \noindent 1. \textit{$H^2$-type bound on $f_\infty$}: Using the classical Moser's iteration method in the elliptic context (\textit{cf.}~\S \ref{sec:first de giorgi lemma} or \cite[\S 2]{vasseur}), it holds that any weak solution $f_\infty \in H^1_\per(\Upsilon)$ of \eqref{eq:equilibrium eqn} with $0 \leq \rho_\infty \leq 1$ belongs to $L^\infty$; we omit the details for concision. Then, by testing the equation against $\Delta_{\bxi} f_\infty$ (which may be performed rigorously by means of difference quotients as in \S \ref{sec:higher reg}), we obtain using Young's inequality the second derivative estimate 
    \begin{equation*}
        \int_\Upsilon |\Delta_{\bxi}f_\infty |^2 \d \bxi \leq C \big( 1 + \Vert f_\infty \Vert_{L^\infty(\Upsilon)}^2\big) \int_\Upsilon |\nabla f_\infty |^2 \d \bxi, 
    \end{equation*}
     for some positive constant $C=C(\Upsilon)$. It follows from Lemma \ref{lem:CZ periodic} that $\Delta_{\bxi} f_\infty \in L^2(\Upsilon)$ implies $f_\infty \in W^{2,2}(\Upsilon)$ and thus $\rho_\infty \in W^{2,2}(\Omega)$. The Sobolev Embedding Theorem therefore implies $\nabla f_\infty \in L^6(\Upsilon)$ and $\nabla \rho_\infty \in BMO(\Omega)$, whence $\nabla \rho_\infty \in L^p(\Omega)$ for all $p \in [1,\infty)$. 

     \smallskip
     
     \noindent 2. \textit{Equation for $\rho_\infty$}: By integrating the equation with respect to the angle variable, we recover the equation for $\rho_\infty$, which reads 
     \begin{equation*}
         \dv\big( (1-\rho_\infty) \p_\infty \big) = \Delta \rho_\infty, 
     \end{equation*}
    where $\p_\infty = \int_0^{2\pi} f_\infty \e(\theta) \d \theta \in L^\infty(\Omega)$. By virtue of $0 \leq \rho_\infty \leq 1$, we obtain 
    \begin{equation}\label{eq:deducing Delta rho}
        \Delta \rho_\infty = (1-\rho_\infty) \underbrace{\dv \, \p_\infty}_{\in L^6} - \nabla \rho_\infty \cdot \p_\infty, 
    \end{equation}
    and we deduce from Lemma \ref{lem:CZ periodic} that $\rho_\infty \in W^{2,6}(\Omega)$ and Morrey's Embedding implies $\nabla\rho_\infty \in W^{1,6}(\Omega) \subset C^{0,2/3}(\Omega)$; in particular, we have $\rho_\infty,\nabla \rho_\infty \in L^\infty(\Omega)$. Returning to \eqref{eq:equilibrium eqn}, we get 
    \begin{equation}\label{eq:deducing Delta f}
        \Delta_{\bxi}f_\infty = (1-\rho_\infty) \underbrace{\nabla f_\infty}_{\in L^6} \cdot \e(\theta) - \underbrace{f_\infty \nabla \rho_\infty \cdot \e(\theta)}_{\in L^\infty}, 
    \end{equation}
    whence, again by Lemma \ref{lem:CZ periodic} and Morrey's Embedding, we get $ \nabla f_\infty \in W^{1,6}(\Upsilon) \subset C^{0,1/2}(\Upsilon)$; thus $f_\infty,\nabla f_\infty \in L^\infty(\Upsilon)$. By returning to \eqref{eq:deducing Delta rho}--\eqref{eq:deducing Delta f}, we get $\Delta_{\bxi} f_\infty \in L^\infty(\Upsilon)$ and $\Delta \rho_\infty \in L^\infty(\Omega)$; in fact, both of these quantities are H\"older continuous. By Lemma \ref{lem:CZ periodic}, we obtain 
    \begin{equation}\label{eq:W2p bound} 
    f_\infty \in W^{2,p}(\Upsilon) \quad \text{for all } p \in [1,\infty). 
    \end{equation}

\smallskip
    
  \noindent 3. \textit{Higher derivatives}: Next, one may use difference quotients with respect to the variable $\bxi$ to make rigorous the following formal computation: differentiating \eqref{eq:equilibrium eqn} with respect to $x$ and $\theta$, respectively, gives 
    \begin{equation*}
    \begin{aligned}
       & \dv\Big( \big( (1-\rho_\infty)\nabla f_\infty - f_\infty \nabla \rho_\infty \big)\otimes\e(\theta) \Big) = \Delta_{\bxi} \nabla f_\infty \\ 
       & \dv\big( (1-\rho_\infty)\partial_\theta f_\infty \e(\theta) +  (1-\rho_\infty) f_\infty \e'(\theta) \big) = \Delta_{\bxi} \partial_\theta f_\infty. 
        \end{aligned}
    \end{equation*}
    In turn, taking the $L^p$ norm of both sides and estimating directly using \eqref{eq:W2p bound} yields $\Delta_{\bxi} \nabla_{\bxi} f_\infty \in L^p(\Upsilon)$ for all $p \in [1,\infty)$. It follows from Lemma \ref{lem:CZ periodic} that $\nabla^3_{\bxi} f \in L^p(\Upsilon)$, \textit{i.e.}, 
    $$ \nabla^2_{\bxi} f_\infty \in W^{1,p}(\Upsilon) \subset C^{0,\gamma}(\Upsilon)$$ 
    for some $\gamma \in (0,1)$, by Morrey's Embedding. One may iterate this procedure indefinitely to deduce that $\nabla^k_{\bxi} f$ is continuous for all $k\in\mathbb{N}$, whence we deduce $f$ is smooth; we skip the details as they are analogous to those in \S \ref{sec:higher reg}. Note that the aforementioned embeddings may be taken with respect to the larger set $\Upsilon' = (-2\pi,4\pi)^3$ using the periodicity of $f_\infty$, whence $\Upsilon$ is comprised as an interior set and thus all embeddings also hold on the closure $\overline{\Upsilon}$, \textit{i.e.}, there exist $\gamma_k \in (0,1)$ such that $\nabla^k_{\bxi} f_\infty \in C^{0,\gamma_k}(\overline{\Upsilon})$ for all $k\in\mathbb{N}$. 
\end{proof}

The uniqueness of stationary solutions is a more delicate issue. Nevertheless, for small P\'eclet number, we have the following result regarding a natural linearisation of \eqref{eq:equilibrium eqn}. Note that the stationary function $\rho_\infty$ is known \emph{a priori}, since the uniform-in-time estimate $0 \leq \rho(t,\cdot) \leq 1$ implies the weak-* subsequential convergence of the sequence $\{ \rho(t,\cdot) \}_{t>0}$ to $\rho_\infty$ also satisfying $0 \leq \rho_\infty \leq 1$ a.e.

\begin{lemma}
    Let $m \geq 0$ and $\rho_\infty \in L^\infty_\per(\Omega)$ be given, and assume that $0 \leq \rho_\infty \leq 1$ a.e. Provided there holds $$|\Pe| < \frac{\min\{D_e,1\}}{C_P},$$
    where $C_P$ is the Poincar\'e constant on $\Upsilon$, then the solution of the linear elliptic equation 
    \begin{equation}\label{eq:linear elliptic eqn}
        \Pe \, \dv((1-\rho_\infty) f_\infty \e(\theta)) = D_e \Delta f_\infty + \partial^2_\theta f_\infty 
    \end{equation}
    is unique within the class $$\mathcal{C} = \Big\{f_\infty \in H^1_\per(\Upsilon): \, f_\infty \geq 0 \text{ a.e.}, \, \int_\Upsilon f_\infty \d \bxi = m\Big\}.$$ 
\end{lemma}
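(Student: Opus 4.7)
The plan is to exploit that equation \eqref{eq:linear elliptic eqn} is \emph{linear} in $f_\infty$ (because $\rho_\infty$ is prescribed), so if $f_\infty^1, f_\infty^2 \in \mathcal{C}$ are two solutions, their difference $w := f_\infty^1 - f_\infty^2 \in H^1_\per(\Upsilon)$ satisfies the same homogeneous equation and, crucially, has zero mean: $\int_\Upsilon w \d \bxi = m - m = 0$. Testing the weak formulation against $w$ itself (note that $w$ is an admissible test function since it lies in $H^1_\per(\Upsilon)$) and integrating by parts (the boundary terms vanish thanks to periodicity), one obtains
\begin{equation*}
    D_e \int_\Upsilon |\nabla w|^2 \d \bxi + \int_\Upsilon |\partial_\theta w|^2 \d\bxi = \Pe \int_\Upsilon (1-\rho_\infty) w \, \e(\theta) \cdot \nabla w \d \bxi.
\end{equation*}

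The left-hand side is bounded below by $\min\{D_e,1\} \Vert \nabla_{\bxi} w \Vert_{L^2(\Upsilon)}^2$. For the right-hand side, using $0 \leq \rho_\infty \leq 1$ a.e., $|\e(\theta)|=1$, the Cauchy--Schwarz inequality, and the fact that $\nabla$ only takes $x$-derivatives (hence $\Vert \nabla w \Vert_{L^2(\Upsilon)} \leq \Vert \nabla_{\bxi} w \Vert_{L^2(\Upsilon)}$), we estimate
\begin{equation*}
    \bigg|\Pe \int_\Upsilon (1-\rho_\infty) w \, \e(\theta)\cdot \nabla w \d \bxi\bigg| \leq |\Pe|\, \Vert w \Vert_{L^2(\Upsilon)}\Vert \nabla_{\bxi} w \Vert_{L^2(\Upsilon)}.
\end{equation*}
Since $w$ has zero mean on $\Upsilon$, the Poincar\'e inequality yields $\Vert w \Vert_{L^2(\Upsilon)} \leq C_P \Vert \nabla_{\bxi} w \Vert_{L^2(\Upsilon)}$, whence combining the two bounds gives
\begin{equation*}
    \min\{D_e,1\}\Vert \nabla_{\bxi} w \Vert_{L^2(\Upsilon)}^2 \leq |\Pe| C_P \Vert \nabla_{\bxi} w \Vert_{L^2(\Upsilon)}^2.
\end{equation*}
Under the smallness assumption $|\Pe| < \min\{D_e,1\}/C_P$, the prefactor on the right is strictly less than that on the left, forcing $\nabla_{\bxi} w \equiv 0$. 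Hence $w$ is constant, and the zero-mean constraint implies $w \equiv 0$, which concludes the uniqueness proof.

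There is no real obstacle here: the argument is a standard energy estimate for the difference of two solutions, and the non-negativity assumption in the definition of $\mathcal{C}$ is not even used (the mass constraint alone suffices to trigger Poincar\'e). The only mild care is in verifying that $w$ is an admissible test function in the weak formulation, which is immediate since $H^1_\per(\Upsilon)$ is closed under differences.
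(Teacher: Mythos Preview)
Your proof is correct and follows essentially the same approach as the paper's own proof: take the difference of two solutions, test the resulting linear equation against the difference, bound the drift term via Cauchy--Schwarz and the Poincar\'e inequality (using the zero-mean condition from the mass constraint), and conclude from the smallness of $|\Pe|$. Your observation that the non-negativity constraint in $\mathcal{C}$ plays no role is also correct.
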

\begin{proof}
    Let $f_1$ and $f_2$ belong to the aforementioned class of solutions and set $\bar{f} := f_1-f_2$. Then, $\int_\Upsilon \bar{f} \d \bxi = 0$, whence it follows that there holds the Poincar\'e inequality $$\Vert \bar{f} \Vert_{L^2(\Upsilon)} \leq C_P \Vert \nabla_{\bxi} \bar{f} \Vert_{L^2(\Upsilon)}.$$
    Furthermore, $\bar{f}$ solves 
    \begin{equation*}
        \Pe \, \dv((1-\rho_\infty) \bar{f} \e(\theta)) = D_e \Delta \bar{f} + \partial^2_\theta \bar{f}, 
    \end{equation*}
    whence testing the equation against $\bar{f}$ yields 
    \begin{equation*}
        D_e \int_\Upsilon |\nabla \bar{f}|^2 \d \bxi + \int_\Upsilon |\partial_\theta \bar{f} |^2 \d \bxi = \Pe \int_\Upsilon (1-\rho_\infty) \bar{f} \e(\theta) \cdot \nabla \bar{f} \d \bxi. 
    \end{equation*}
The Poincar\'e inequality yields 
    \begin{equation*}
        \min\{D_e,1\}\Vert \nabla_{\bxi} \bar{f} \Vert^2_{L^2(\Upsilon)} \leq \Pe \Vert \bar{f} \Vert_{L^2(\Upsilon)} \Vert \nabla \bar{f} \Vert_{L^2(\Upsilon)} \leq \Pe \, C_P \Vert \nabla_{\bxi} \bar{f} \Vert^2_{L^2(\Upsilon)}. 
    \end{equation*}
    We deduce that $\nabla_{\bxi} \bar{f} \equiv 0$ a.e., whence $f_1$ and $f_2$ differ by a constant; the mass constraint $\int_\Upsilon f_1 \d \bxi = \int_\Upsilon f_2 \d \bxi = m$ implies that this constant is null, which concludes the proof. 
\end{proof}

\subsection{Convergence to Equilibrium}\label{subsec:convergence}

We henceforth concentrate entirely on the convergence to the stationary states in the limit of infinite time. Our first result shows the convergence as $t \to \infty$ of the spatial averages $\int_\Omega f(t,x,\theta) \d x$  to the constant stationary state $\frac{1}{2\pi}\int_\Upsilon f_0 \d x \d \theta = \frac{1}{2\pi}\int_\Omega \rho_0 \d x$, regardless of the P\'eclet number. 

\begin{lemma}[Convergence of spatial averages]
    Let $f_0 \in L^2_\per(\Upsilon)$ be admissible non-negative initial data and let $f$ be the unique solution of \eqref{eq:main eqn}. Then, for all $t \geq 0$, 
    \begin{equation*}
        \left\Vert \int_\Omega \!\! \bigg( f(t)  - \frac{1}{2\pi}\int_0^{2\pi}\!\!\! f_0 \d \theta \bigg) \d x \right\Vert_{L^2([0,2\pi])} \leq e^{-t} \left\Vert \int_\Omega \!\! \bigg( f_0 - \frac{1}{2\pi}\int_0^{2\pi}\!\!\! f_0 \d \theta \bigg) \d x \right\Vert_{L^2([0,2\pi])}. 
    \end{equation*}
\end{lemma}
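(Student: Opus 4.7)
The plan is to integrate out the $x$-variable to reduce the problem to a one-dimensional periodic heat equation for which the exponential decay is elementary. Define
\begin{equation*}
F(t,\theta) := \int_\Omega f(t,x,\theta) \, dx, \qquad F_0(\theta) := \int_\Omega f_0(x,\theta) \, dx.
\end{equation*}
First I would integrate the equation \eqref{eq:main eqn} in the weak sense against a test function depending only on $\theta$ (or, equivalently, integrate in $x$ over the periodic cell $\Omega$). The two terms $\Pe \, \dv((1-\rho)f\e(\theta))$ and $D_e \Delta f$ are $x$-divergences and hence vanish upon integration over $\Omega$ by periodicity. What remains is the one-dimensional periodic heat equation
\begin{equation*}
\partial_t F = \partial_\theta^2 F \quad \text{on } (0,\infty)\times [0,2\pi], \qquad F(0,\cdot) = F_0,
\end{equation*}
with periodic boundary conditions in $\theta$.

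Next I would observe that the $\theta$-mean of $F$ is conserved and equals the conserved total mass: integrating \eqref{eq:main eqn} over all of $\Upsilon$ shows $\int_\Upsilon f(t)\,d\bxi = \int_\Upsilon f_0 \, d\bxi$, so for every $t \geq 0$
\begin{equation*}
\int_0^{2\pi} F(t,\theta) \, d\theta = \int_\Upsilon f_0 \, d\bxi = \int_\Omega \bigg( \int_0^{2\pi} f_0(x,\theta) \, d\theta \bigg) dx.
\end{equation*}
Setting
\begin{equation*}
G(t,\theta) := F(t,\theta) - \int_\Omega \bigg( \frac{1}{2\pi}\int_0^{2\pi} f_0(x,\theta') \, d\theta' \bigg) dx,
\end{equation*}
subtracting a $\theta$-independent constant preserves the heat equation, so $\partial_t G = \partial_\theta^2 G$, and by construction $\int_0^{2\pi} G(t,\theta)\, d\theta = 0$ for all $t \geq 0$.

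Finally, I would test the heat equation against $G$ itself and apply the Poincar\'e (Wirtinger) inequality on the circle $[0,2\pi]$, whose first nonzero eigenvalue of $-\partial_\theta^2$ with periodic boundary conditions is $1$ (eigenfunctions $\sin\theta,\cos\theta$): for mean-zero $G$,
\begin{equation*}
\int_0^{2\pi} |\partial_\theta G|^2 \, d\theta \, \geq \, \int_0^{2\pi} |G|^2 \, d\theta.
\end{equation*}
This gives $\tfrac{d}{dt}\|G(t)\|_{L^2}^2 \leq -2\|G(t)\|_{L^2}^2$, whence $\|G(t)\|_{L^2} \leq e^{-t}\|G(0)\|_{L^2}$, which is precisely the claimed inequality.

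The only nontrivial point, and what I would treat as the \emph{main obstacle}, is justifying the reduction to the heat equation rigorously from the weak formulation: specifically, checking that $F \in C([0,\infty);L^2(0,2\pi)) \cap L^2_{\loc}(0,\infty;H^1(0,2\pi))$ with $\partial_t F \in L^2_{\loc}(0,\infty;(H^1)'(0,2\pi))$ and that one may legitimately use test functions of the form $\varphi(t,\theta)$ in \eqref{eq:weak_form_def} (pairing with $1_\Omega(x)$). This is standard but should be spelled out by approximation of such $\varphi$ by fully periodic test functions on $\Upsilon$, using the Fubini--Tonelli theorem and the regularity of $f$ guaranteed by Definition \ref{def:concept of solution}.
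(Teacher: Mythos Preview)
Your proposal is correct and follows essentially the same approach as the paper: integrate out $x$ to obtain the one-dimensional periodic heat equation for $F(t,\theta)=\int_\Omega f\,dx$, subtract the conserved $\theta$-mean, and apply Wirtinger's inequality (Poincar\'e constant $1$ on the circle) together with Gr\"onwall's lemma. The paper's proof is identical in structure, though it does not spell out the rigorous weak-formulation justification that you flag as the main obstacle.
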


\begin{proof}
    By integrating the equation \eqref{eq:main eqn} with respect to the space variable, we see that $h(t,\theta) := \int_\Omega f(t,x,\theta) \d x$ satisfies the heat equation 
    \begin{equation*}
        \left\lbrace \begin{aligned}
        &\partial_t h = \partial^2_\theta h, \\ 
        &h|_{t=0} = h_0, 
        \end{aligned} \right. 
    \end{equation*}
    where we denote $h_0 := \int_\Omega f_0(x,\theta) \d x$. Setting $$\bar{h} := h - \frac{1}{2\pi}\int_0^{2\pi} h_0 \d \theta,$$ the conservation of the initial mass implies 
    \begin{equation}\label{eq:zero mass diff heat eqn}\int_0^{2\pi} \bar{h} \d \theta = 0. 
    \end{equation}
    Performing the usual parabolic estimate for the equation satisfied by $\bar{h}$ and using Wirtinger's inequality (\textit{i.e.}~Poincar\'e's inequality in one dimension, for which the constant is $1$), we get 
    \begin{equation*}
        \frac{1}{2}\frac{\der}{\der t} \int_0^{2\pi} |\bar{h}|^2 \d \theta = -\int_0^{2\pi} |\partial_\theta \bar{h}|^2 \d \theta \leq -\int_0^{2\pi} |\bar{h}|^2 \d \theta. 
    \end{equation*}
    Gr\"onwall's Lemma then implies 
    \begin{equation*}
        \int_0^{2\pi} |\bar{h}(t)|^2 \d \theta \leq \bigg(  \int_0^{2\pi} |\bar{h}(0)|^2 \d \theta\bigg) e^{-2t}, 
    \end{equation*}
    and the result follows. 
\end{proof}

We conclude this section with the proof of Theorem \ref{thm:small peclet conv}, which shows that, under the constraint of small P\'eclet number, weak solutions converge to a constant stationary state prescribed by the initial data.

\begin{proof}[Proof of Theorem \ref{thm:small peclet conv}]
    Define $w:= f - \langle f_0 \rangle$, where $\langle f_0 \rangle = \avint_\Upsilon f_0 \d \bxi$. Observe that $w$ solves 
  \begin{equation}\label{eq:diff eqn for diff ii}
    \left\lbrace\begin{aligned}
        &\partial_t w + \Pe ~ \dv\big( (1-\rho)w \e(\theta) \big) = D_e \Delta w + \partial^2_\theta w - \Pe \langle f_0 \rangle ~ \dv\big((1-\rho) \e(\theta) \big), \\ 
        &w(0,\cdot) = f_0 - \langle f_0 \rangle. 
    \end{aligned}\right.
\end{equation}
Testing the above with $w$ and using Young's inequality gives 
\begin{equation*}
    \begin{aligned}
       \frac{\der}{\der t}\int_\Upsilon w^2 \d \bxi + \min\{D_e,1\}\int_\Upsilon|\nabla_{\bxi} w|^2 \d \bxi \leq \, &\,\frac{\Pe^2}{\min\{D_e,1\}}\int_\Upsilon w^2 \d \bxi \\ 
        &+ 2 \int_\Upsilon 
     \Pe \langle f_0 \rangle \dv\big((1-\rho) \e(\theta) \big)
     \, w    \d \bxi, 
     \end{aligned}
     \end{equation*}
     and the final term on the right-hand side may be controlled as 
     \begin{align*}
     \int_\Upsilon 
     \dv\big((1-\rho) \e(\theta) \big)
     \, w    \d \bxi =
     -
     \int_\Upsilon 
      \nabla \rho \cdot \e(\theta)
     \, w    \d \bxi
     &=
     -
     \int_\Upsilon 
      \nabla(\rho-\langle \rho_0 \rangle) \cdot \e(\theta)
     \, w    \d \bxi
     \\&=     
     \int_\Upsilon 
     (\rho-\langle \rho_0 \rangle)
     \e(\theta) \cdot
     \nabla w \d \bxi
    \\
    &\leq
   2\pi  \nm{w}_{L^2(\Upsilon)}
     \nm{\nabla w}_{L^2(\Upsilon)}, 
\end{align*}
where we used Jensen's inequality to get $$\begin{aligned}\int_\Upsilon |\rho - \langle \rho_0 \rangle|^2 \d \bxi &= \int_\Upsilon \Big| \int_0^{2\pi} \big( f(t,x,\theta') - \langle f_0 \rangle \big) \d \theta' \Big|^2 \d \bxi \\ &\leq 2\pi \int_\Upsilon \int_0^{2\pi} \big| f(t,x,\theta') - \langle f_0 \rangle  \big|^2 \d \theta' \d \bxi \\ &= (2\pi)^2 \Vert w \Vert^2_{L^2(\Upsilon)}.
\end{aligned}$$
It follows that 
\begin{equation*}
    \begin{aligned}
        \frac{\der}{\der t}\Vert w(t,\cdot) \Vert_{L^2(\Upsilon)}^2 \d \bxi + \frac{1}{2}\min\{D_e,1\} \Vert \nabla_{\bxi} w(t,\cdot) \Vert_{L^2(\Upsilon)}^2 \leq \frac{(2\pi)^2\Pe^2(1+\langle f_0\rangle)^2}{\min\{D_e,1\}}\Vert w(t,\cdot)\Vert_{L^2(\Upsilon)}^2, 
    \end{aligned}
\end{equation*}
and, since $\int_\Upsilon w(t,\cdot) \d \bxi = \int_\Upsilon w(0,\cdot) \d \bxi = 0$ for all $t \geq 0$, an application of Poincar\'e's inequality yields 
\begin{equation*}
    \begin{aligned}
        \frac{\der}{\der t}\Vert w(t,\cdot) \Vert_{L^2(\Upsilon)}^2 \d \bxi \leq -\Big(\underbrace{\frac{1}{2}C_P^{-2}\min\{D_e,1\} - \frac{(2\pi)^2\Pe^2(1+\langle f_0\rangle)^2}{\min\{D_e,1\}}  }_{= 2\kappa }\Big)\Vert w(t,\cdot)\Vert_{L^2(\Upsilon)}^2, 
    \end{aligned}
\end{equation*}
where $C_P$ is the Poincar\'e constant on $\Upsilon$. Provided $\kappa$ is positive, which imposes the smallness condition on $\Pe$, Gr\"onwall's Lemma implies 
\begin{equation*}
    \Vert w(t,\cdot) \Vert^2_{L^2(\Upsilon)} \leq \Vert w(0,\cdot) \Vert_{L^2(\Upsilon)}^2e^{-2\kappa t}, 
\end{equation*}
and the proof is complete. 
\end{proof}

\appendix

\section{Proofs of Technical Lemmas}\label{sec:appendix}

\subsection{Proof of Lemma \ref{lem:CZ periodic}
}\label{app:CZ periodic}

We explain the simple idea underlying the proof of Lemma \ref{lem:CZ periodic}. To begin with, the classical Calder\'on--Zygmund Theorem requires the function under consideration to be compactly supported. In order to bypass this, we use the fact that the periodic cell $\Upsilon = (0,2\pi)^3$ may be interpreted as an interior set of the larger set $\Upsilon' := (-2\pi,4\pi)^3$, say. Periodicity then implies that, for all $p \in [1,\infty]$ and $v \in W^{2,p}_\per(\Upsilon)$, there holds 
\begin{equation}\label{eq:small cell to big cell}
    \Vert \nabla_{\bxi}^{j} v \Vert_{L^p(\Upsilon')} = 3^{3} \Vert \nabla_{\bxi}^{j} v \Vert_{L^p(\Upsilon)} \quad \text{for } j =0,1,2, 
\end{equation}
so that one may recover the desired estimate after having localised. 

\begin{proof}[Proof of Lemma \ref{lem:CZ periodic}]
    Let $\eta \in C^\infty_c(\Upsilon')$ be a positive bump function such that $0 \leq \eta \leq 1$ and $\eta \equiv 1$ on $\Upsilon$. Note that there exists a positive constant $C=C(\Upsilon)$ such that 
    \begin{equation}\label{eq:test function bounds}
        \Vert \nabla_{\bxi}^{j} \eta \Vert_{L^\infty(\Upsilon')} \leq C \quad \text{for } j=0,1,2. 
    \end{equation}
 
    \smallskip 
   \noindent 1. \textit{For smooth functions}: Fix $v$ smooth and $\Upsilon$-periodic, and observe that $\eta v \in W^{2,p}_0(\Upsilon')$. It then follows from the classical Calder\'on--Zygmund Inequality (\textit{cf.}~\cite[Corollary 9.10 \S 9.4]{GilbargTrudinger}) that 
   \begin{equation*}
       \Vert \nabla_{\bxi}^2 (\eta v) \Vert_{L^p(\Upsilon')} \leq C\Vert \Delta_{\bxi} (\eta v) \Vert_{L^p(\Upsilon')}. 
   \end{equation*}
   Using the product rule and the triangle inequality, we also have 
   \begin{equation*}
    \Vert \eta \nabla^2_{\bxi} v \Vert_{L^p(\Upsilon')} \leq \Vert \nabla^2_{\bxi}(\eta v ) \Vert_{L^p(\Upsilon')} +  \Vert \nabla_{\bxi} v \otimes \nabla_{\bxi} \eta +  \nabla_{\bxi} \eta \otimes \nabla_{\bxi} v + v \nabla_{\bxi}^2 \eta \Vert_{L^p(\Upsilon')}, 
   \end{equation*}
whence combining with the previous estimate and \eqref{eq:test function bounds} yields 
   \begin{equation}\label{eq:almost there periodic CZ}
    \Vert \eta \nabla^2_{\bxi} v \Vert_{L^p(\Upsilon')} \leq C\Big( \Vert \Delta_{\bxi}(\eta v ) \Vert_{L^p(\Upsilon')} +  \Vert \nabla_{\bxi} v \Vert_{L^p(\Upsilon')} +  \Vert v\Vert_{L^p(\Upsilon')} \Big). 
   \end{equation}
The inclusion $\Upsilon \subset \Upsilon'$ and the condition $\eta \equiv 1$ on $\Upsilon$ implies that the left-hand side of the previous estimate is bounded from below by  $\Vert \nabla^2_{\bxi} v \Vert_{L^p(\Upsilon)}$. In turn, using \eqref{eq:small cell to big cell} and the periodicity of $v$ to control the final two terms on the right-hand side of \eqref{eq:almost there periodic CZ}, we obtain 
   \begin{equation*}
    \Vert \nabla^2_{\bxi} v \Vert_{L^p(\Upsilon)} \leq C\Big( \Vert \Delta_{\bxi}(\eta v ) \Vert_{L^p(\Upsilon')} +  \Vert v\Vert_{W^{1,p}(\Upsilon)} \Big). 
   \end{equation*}
By expanding 
\begin{equation*}
    \Delta_{\bxi}(\eta v) = v \Delta_{\bxi} \eta + 2 \nabla_{\bxi} \eta \cdot \nabla_{\bxi} v + \eta \Delta_{\bxi} v, 
\end{equation*}
and using again \eqref{eq:small cell to big cell} and \eqref{eq:test function bounds}, we estimate the first term on the right-hand side of \eqref{eq:almost there periodic CZ} to deduce 
   \begin{equation}\label{eq:first step proof of CZ periodic}
    \Vert \nabla^2_{\bxi} v \Vert_{L^p(\Upsilon)} \leq C\Big( \Vert \Delta_{\bxi} v \Vert_{L^p(\Upsilon)} +  \Vert v\Vert_{W^{1,p}(\Upsilon)} \Big) 
   \end{equation}
   for all smooth $\Upsilon$-periodic $v$.

\smallskip

 \noindent 2. \textit{Density argument}:  Now fix $v \in W^{1,p}_\per(\Upsilon)$ satisfying $\Delta_{\bxi} v \in L^p(\Upsilon)$ as per the statement. Let $\{\eta_\varepsilon\}_\varepsilon$ be the usual sequence of Friedrichs mollifiers on $\mathbb{R}^3$; note that the convolutions $$v * \eta_\varepsilon(\bxi) = \int_{\mathbb{R}^3} \eta_\varepsilon(\bxi-\bzeta) v(\bzeta) \d \bzeta$$ are well-defined by virtue of $v$ being locally integrable on any subset of $\mathbb{R}^3$ due to its periodicity. It follows from \eqref{eq:first step proof of CZ periodic} that, since $v*\eta_\varepsilon$ is smooth and periodic, there holds 
  \begin{equation*}
   \begin{aligned}
       \Vert \nabla^2_{\bxi} v * \eta_\varepsilon \Vert_{L^p(\Upsilon)} &\leq C\Big( \Vert \Delta_{\bxi} v * \eta_\varepsilon \Vert_{L^p(\Upsilon)} +  \Vert v*\eta_\varepsilon\Vert_{W^{1,p}(\Upsilon)} \Big) \\ 
       &\leq C\Big( \Vert \Delta_{\bxi} v \Vert_{L^p(\Upsilon)} +  \Vert v \Vert_{W^{1,p}(\Upsilon)} \Big), 
   \end{aligned} 
     \end{equation*}
     where the second line follows from elementary results on mollifiers. The conclusion of the lemma for general $p \in (1,\infty)$ now follows from letting $\varepsilon \to 0$. 

     \smallskip 

     \noindent 3. \textit{Case $p=2$}: In the specific case $p=2$, we integrate by parts twice, noting that the periodicity of $v$ implies that all boundary terms cancel, to obtain 
     \begin{equation*}
         \begin{aligned}
             \Vert \nabla_{\bxi}^2 v \Vert^2_{L^2(\Upsilon)} = \sum_{ij} \int_{\Upsilon} \partial_{ij} v \partial_{ij} v \d \bxi = \sum_{ij} \int_\Upsilon \partial_{ii}v \partial_{jj} v \d \bxi 
             = \int_\Upsilon \Big( \sum_i  \partial_{ii}v \Big) \Big( \sum_j \partial_{jj} v \Big) \d \bxi, 
         \end{aligned}
     \end{equation*}
    and hence $\Vert \nabla_{\bxi}^2 v \Vert^2_{L^2(\Upsilon)} = \Vert \Delta_{\bxi} v \Vert^2_{L^2(\Upsilon)}$, which concludes the proof. 
\end{proof}

\subsection{Proof of Lemma \ref{lem:equiv weak}}\label{app:alternative weak formulation}

\begin{proof}[Proof of Lemma \ref{lem:equiv weak}]

Let $f$ be the unique weak solution of \eqref{eq:main eqn}. Recall from \cite[\S 3]{bbes} that $f$ was constructed via a Galerkin approximation, and may be written as a subsequential limit of the sequence of smooth functions $\{f_n\}_n$ which solve, as a pointwise equality between continuous functions, 
\begin{equation}\label{eq:galerkin approx}
        \partial_t f_n + \dv((1-(\rho_n)_+)_+ f_n \e(\theta)) = \Delta_{\bxi} f_n, 
        \end{equation}
        where $\rho_n = \int_0^{2\pi} f_n \d \theta$ is also smooth, with initial data $f_n(0,\cdot) = f_{0,n} \in L^2_\per(\Upsilon)$ a smooth approximation of $f_0$ in terms of the Galerkin basis functions; \textit{i.e.}~$\lim_{n\to\infty}\Vert f_0 - f_{0,n} \Vert_{L^2(\Upsilon)}=0$. More precisely, with $f_n$ as above, there holds 
        \begin{equation}
        \lim_{n\to\infty}\Vert f_n - f \Vert_{L^2(0,T;H^1(\Upsilon))}=0, 
        \end{equation}
        and, for each $n$, the function $f_n : [0,T]\times\mathbb{R}^3 \to \mathbb{R}$ is such that, for each fixed $t \in [0,T]$, the function $f_n(t,\cdot)$ is triply $2\pi$-periodic and $C^1$. 
The strong convergence in $L^2(0,T;H^1(\Upsilon))$ and the periodicity implies, using a straightforward covering argument, that for any bounded domain $\omega \subset \mathbb{R}^3$, there holds 
\begin{equation}\label{eq:on any bounded domain fn - f}
    \lim_{n\to\infty}\Vert f_n - f \Vert_{L^2(0,T;H^1(\omega))} = 0. 
\end{equation}
Similarly, $\rho_n(t,\cdot)$ is doubly $2\pi$-periodic for each fixed $t \in [0,T]$, and 
\begin{equation}\label{eq:on any bounded domain rhon - rho}
\lim_{n\to\infty}\Vert \rho_n - \rho \Vert_{L^2(0,T;H^1(\omega'))}=0 
\end{equation}
for all bounded domains $\omega' \subset \mathbb{R}^2$. Using \eqref{eq:on any bounded domain fn - f}, a standard argument using Minkowski's inequality 
implies (up to a subsequence which we do not relabel) 
\begin{equation}\label{eq:on any bounded domain fn - f fixed time}
    \lim_{n\to\infty}\Vert f_n(t,\cdot) - f(t,\cdot) \Vert_{H^1(\omega)} = 0 \quad \text{a.e.~}t \in [0,T]. 
\end{equation}

Let $\varphi$ be as in the statement of Lemma \ref{lem:equiv weak}. By testing against the pointwise equality \eqref{eq:galerkin approx} and integrating by parts, using the compact support of $\varphi(t,\cdot)$ for all $t$, we obtain 
\begin{equation*}
   \begin{aligned} \int_{t_1}^{t_2} \int_{\mathbb{R}^3} f_n \partial_t \varphi \d \bxi \d t + \int_{t_1}^{t_2} \int_{\mathbb{R}^3} (1-(\rho_n)_+)_+ f_n \e(\theta) \cdot \nabla \varphi \d \bxi \d t &- \int_{t_1}^{t_2} \int_{\mathbb{R}^3} \nabla_{\bxi} f_n \cdot \nabla_{\bxi} \varphi \d \bxi \d t \\ 
   &= \int_{\mathbb{R}^3} f_n \varphi \d \bxi \Big|_{t_2}  -  \int_{\mathbb{R}^3} f_n \varphi \d \bxi \Big|_{t_1} 
   \end{aligned}
\end{equation*}
for all $n$. By passing to the limit in $n$, using the strong local convergences of \eqref{eq:on any bounded domain fn - f}-\eqref{eq:on any bounded domain fn - f fixed time} and the compact support of the test function in the variable $\bxi$, we obtain 
\begin{equation*}
      \begin{aligned} \int_{t_1}^{t_2} \int_{\mathbb{R}^3} f \partial_t \varphi \d \bxi \d t + \int_{t_1}^{t_2} \int_{\mathbb{R}^3} (1-(\rho)_+)_+ f \e(\theta) \cdot \nabla \varphi \d \bxi \d t - & \int_{t_1}^{t_2} \int_{\mathbb{R}^3} \nabla_{\bxi} f \cdot \nabla_{\bxi} \varphi \d \bxi \d t \\ 
   &= \int_{\mathbb{R}^3} f \varphi \d \bxi \Big|_{t_2}  -  \int_{\mathbb{R}^3} f \varphi \d \bxi \Big|_{t_1}. 
   \end{aligned}
\end{equation*}
We recall from \cite[\S 3.3]{bbes} that $\rho$ satisfies  the estimates \eqref{eq:rho bounds from bbes}, whence the relation \eqref{eq:equiv weak form} follows immediately. 
\end{proof}

\printbibliography

\end{document}